\xpatchcmd{\@todo}{\setkeys{todonotes}{#1}}{\setkeys{todonotes}{inline,#1}}{}{}
\newenvironment{enumeratei}{\begin{enumerate}[\upshape i)]}{\end{enumerate}}
\newenvironment{enumeratea}{\begin{enumerate}[\upshape a)]\setlength{\itemsep}{1ex}}{\end{enumerate}}
\newtheorem{thm}{Theorem}[section]
\newtheorem{lem}[thm]{Lemma}
\newtheorem{cor}[thm]{Corollary}
\newtheorem{prop}[thm]{Proposition}
\newtheorem{defn}[thm]{Definition}
\theoremstyle{definition}
\newtheorem{rem}[thm]{Remark}
\renewcommand{\le}{\leqslant} 
\renewcommand{\ge}{\geqslant} 
\renewcommand{\leq}{\leqslant} 
\renewcommand{\geq}{\geqslant}
\newcommand{\ind}{\mathds{1}}
\newcommand{\eps}{\varepsilon}
\newcommand{\abs}[1]{\left\vert#1\right\vert}
\newcommand{\set}[1]{\left\{#1\right\}}
\newcommand{\eg}{\emph{e.g.,}}
\newcommand{\probc}{\stackrel{\mathrm{P}}{\longrightarrow}}
\def\qed{ \hfill $\blacksquare$}  
     \let\gl=\lambda
\newcommand{\cA}{\mathcal{A}}\newcommand{\cC}{\mathcal{C}}
\newcommand{\cE}{\mathcal{E}}\newcommand{\cF}{\mathcal{F}}
\newcommand{\cH}{\mathcal{H}}\newcommand{\cI}{\mathcal{I}}
\newcommand{\cM}{\mathcal{M}}\newcommand{\cN}{\mathcal{N}}
\newcommand{\cP}{\mathcal{P}}
\newcommand{\cS}{\mathcal{S}}\newcommand{\cT}{\mathcal{T}}
\newcommand{\vP}{\mathbf{P}}\newcommand{\vQ}{\mathbf{Q}}
\newcommand{\vs}{\mathbf{s}}\newcommand{\vt}{\mathbf{t}}
\newcommand{\mvx}{\boldsymbol{x}}\newcommand{\mvy}{\boldsymbol{y}}
\newcommand{\mvvarpi}{\boldsymbol{\varpi}}
\newcommand{\fF}{\mathfrak{F}}
\newcommand{\fJ}{\mathfrak{J}}
\newcommand{\fP}{\mathfrak{P}}
\newcommand{\bb}[1]{\mathbb{#1}}
\newcommand{\bL}{\mathbb{L}}
\newcommand{\bN}{\mathbb{N}}
\newcommand{\bR}{\mathbb{R}}
\newcommand{\bT}{\mathbb{T}}
\newcommand{\bZ}{\mathbb{Z}}        
\newcommand{\sE}{\mathscr{E}}
\DeclareMathOperator{\E}{\mathds{E}}
\DeclareMathOperator{\pr}{\mathds{P}}
\DeclareMathOperator{\argmax}{argmax}
\DeclareMathOperator{\N}{N}
\newcommand{\bbT}{\mathbb{T}}
\newcommand{\TT}{\mathcal{T}}
\newcommand{\bs}{\mathbf{s}}
\newcommand{\bt}{\mathbf{t}}
\newcommand{\bfomega}{{\boldsymbol \omega}}
\newcommand{\Zbold}{{\mathbb{Z}}}
\newcommand{\prob}{\mathbb{P}}
\definecolor{aqua}{rgb}{0.0, 1.0, 1.0}
\definecolor{boo}{rgb}{1.0, 0.0, 1.0}
\newcommand{\probfr}{\stackrel{\mbox{$\operatorname{P}$-\bf fr}}{\longrightarrow}}
\newcommand{\probcrf}{\stackrel{\mbox{$\operatorname{P}$-\bf efr}}{\longrightarrow}}
\newcommand{\Efr}{\stackrel{\mbox{$\E$-\bf fr}}{\longrightarrow}}
\newcommand{\convas}{\stackrel{\mathrm{a.s.}}{\longrightarrow}}
\newcommand{\convd}{\stackrel{d}{\longrightarrow}}
\newcommand{\convp}{\stackrel{P}{\longrightarrow}}
\newtheorem{lemma}[thm]{Lemma}
\newcommand{\stod}{\preceq_{\mathrm{st}}}
 \DeclareMathOperator{\BP}{BP}
\newcommand{\Poi}{{\sf Poisson} }
\newcommand{\fringe}{{\sf fringe} }
\DeclareMathAlphabet{\mathscrbf}{OMS}{mdugm}{b}{n}
\DeclareFontFamily{U}{BOONDOX-calo}{\skewchar\font=45 }
\DeclareFontShape{U}{BOONDOX-calo}{m}{n}{
  <-> s*[1.05] BOONDOX-r-calo}{}
\DeclareFontShape{U}{BOONDOX-calo}{b}{n}{
  <-> s*[1.05] BOONDOX-b-calo}{}
\DeclareMathAlphabet{\mathcalb}{U}{BOONDOX-calo}{m}{n}
\SetMathAlphabet{\mathcalb}{bold}{U}{BOONDOX-calo}{b}{n}
\DeclareMathAlphabet{\mathbcalb}{U}{BOONDOX-calo}{b}{n}
\newcommand{\fpm}{\mvvarpi}
\tikzset{every path/.style={line width=.07 cm}}
\definecolor{cser}{RGB}{0,145,135}
\begin{document}

\title[Evolution of recursive trees with limited memory]{Evolution of recursive trees with limited memory}

\author[Angel]{Omer Angel}
\address{Department of Mathematics, University of British Columbia.}
\email{angel@math.ubc.ca}
\author[Bhamidi]{Shankar Bhamidi}
\address{Department of Statistics and Operations Research, University of North Carolina at Chapel Hill.}
\email{bhamidi@email.unc.edu}
\author[Donderwinkel]{Serte Donderwinkel}
\address{Bernoulli Institute of Mathematics, University of Groningen.}
\email{s.a.donderwinkel@rug.nl}
\author[Maitra]{Neeladri Maitra}
\address{Department of Mathematics, University of Illinois at Urbana-Champaign.}
\email{nmaitra@illinois.edu}
\author[Sakanaveeti]{Akshay Sakanaveeti}
\address{Department of Statistics and Operations Research, University of North Carolina at Chapel Hill.}
\email{sakshay@unc.edu}

\date{}

\begin{abstract}
	Motivated by questions in social networks, distributed computing and probabilistic combinatorics, the last few years have seen increasing interest in network evolution models where new vertices entering the system need to make decisions based on a partial snapshot of the current state of the network. This paper considers a specific variant of the classical random recursive tree dynamics,  where a vertex at time $n+1$ has information only on those vertices that have arrived in the interval $[j(n), n]$ for a sequence $j(n) \uparrow \infty$, and connects to vertices uniformly at random amongst this set. \textcolor{black}{We consider} two different regimes on the density information, termed macroscopic and mesoscopic regimes, \textcolor{black}{which respectively correspond to $j(n)=\theta n$ for some $\theta \in (0,1)$, and $j(n)=n-n^{\beta}$ for some $\beta \in (0,1)$.} \textcolor{black}{Our main interest is in studying} asymptotics of various functionals of the network, including local weak limits and the height of the network. \textcolor{black}{We show that in the macroscopic regime, the local limit is expressed in terms of an associated continuous time branching process that depends on the parameter $\theta$, while it is a $\mathrm{Poisson}(1)$-branching process in the mesoscopic regime for any $\beta \in (0,1)$. Furthermore, the height of the macroscopic tree is logarithmic, which we prove exploiting a connection with scaled-attachment random recursive trees (SARRTs) as studied in \cite{devroye2012depth}, while it is polynomial in the mesoscopic regime; our argument in this latter case relies on a differential equation approach to track the ancestor indices of late-coming vertices, together with a multiscale analysis. {Further, we develop an exploration algorithm to simultaneously reveal the ancestral path of \emph{youngest} vertices. Using this algorithm, we show that in the mesoscopic regime, the global structure experiences a phase transition at $\beta=1/2$.}} 	 
\end{abstract}

	\maketitle


    \noindent  
       

    \begin{figure}[h]
        \centering
        \includegraphics[scale=.6]{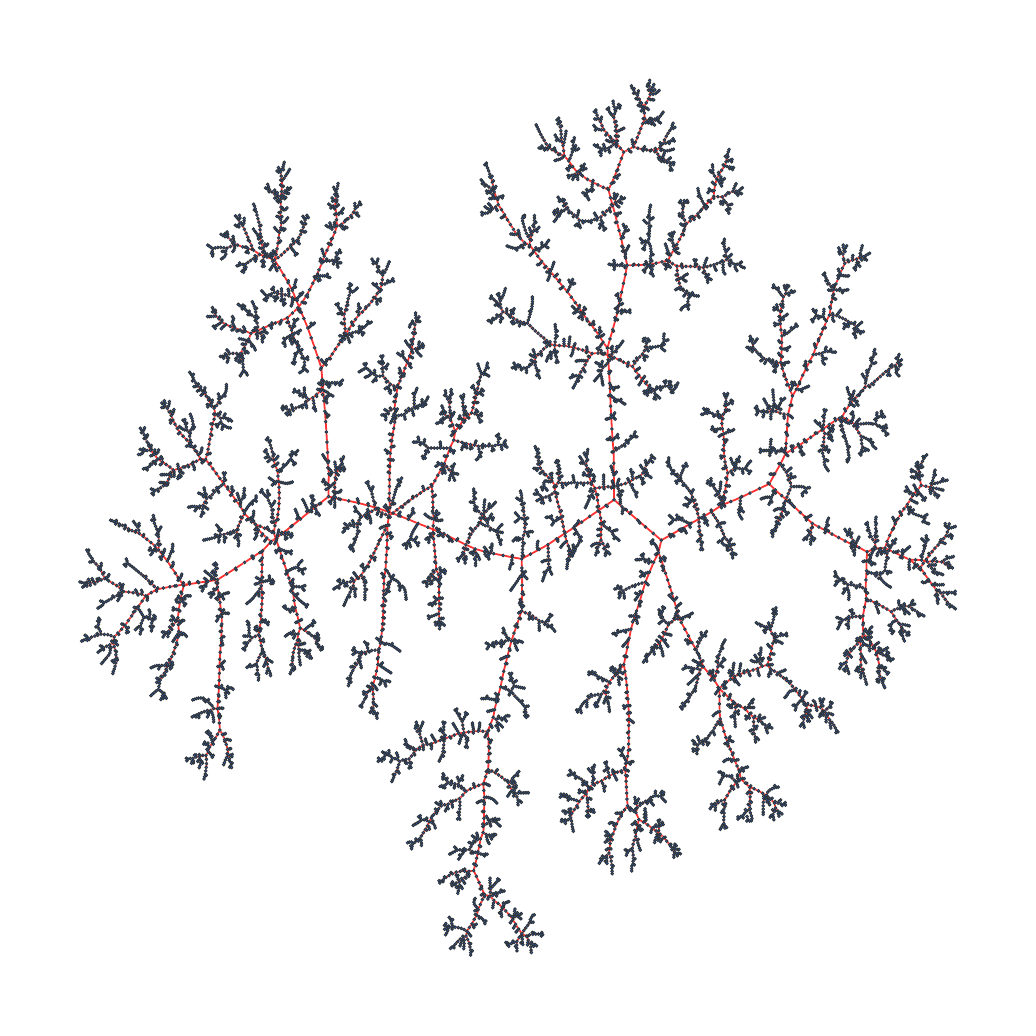} 
        \caption{A recursive tree with limited memory in the mesoscopic regime with 10,000 nodes and with $\beta=0.75$ (see Section \ref{sec:model} for relevant definitions). Section \ref{sec:sims} has more pictures.}
        \label{fig:your_image} 
    \end{figure}
        
	\section{Introduction}

Driven by questions in probabilistic combinatorics, as well as the vast quantities of network valued data in the last few years, the last few years have seen the development of a number of new mathematical network models to understand the emergence of macroscopic properties in such systems~\cites{albert2002statistical, newman2003structure,newman2010networks, bollobas2001random,durrett-rg-book,van2009random}. One sub-area in this rapidly burgeoning field is dynamic or temporal networks,~\cites{holme2012temporal, masuda2016guidance}.

The preferential attachment class of network evolution models assumes new vertices enter the system and then make probabilistic choices for connecting to existing vertices based on the {\bf entire} state of the current network.  There has been increasing interest in understanding the impact of limited information availability for new vertices on the system's state and its role in the evolution of real world systems. This has motivated (at least) two distinct lines of recent research:
\begin{enumeratea}
\item  Network evolution schemes where individuals try to form connections via limited scope explorations of the underlying network,  such as random walks,  initialized at uniform locations in the network run for a few number of steps~\cites{krapvisky2023magic, banerjee2024co, berry2024random,englander2025structural}. 

\item  More relevant for this work, incorporating limited temporal information,  where new individuals have information only on a subset of individuals in the network and need to make their (probabilistic) decisions based on this information ~\cites{baccelli2019renewal,king2021fluid,dey2022asymptotic,BBDS04_meso,BBDS04_macro}. 
\end{enumeratea}
The goal of this paper is to propose and understand a specific class of such models, where new individuals only have information on more recent individuals in the network. \textcolor{black}{We are motivated by real-world network scenarios where older individuals, even though had been influential during some time period, lose relevance over time, and thus are less preferred by younger incoming individuals for developing connections. Consider for example, networks of research papers where a pair of research papers are linked if one refers the another. A new incoming research paper is more likely to refer to a recent, well-explained survey, than a paper that is quite old by now, but greatly influential in its own time, perhaps simply because it was always a little harder to read. Let us proceed with the formal model definition next.}


\subsection{Network model}
\label{sec:model}
We consider a model of recursively growing random trees $(\mathcal{T}_n)_{n \geq 1}$ as follows. $\mathcal{T}_1$ is the singleton tree with only vertex $1$ and no edges. For $n\geq 1$, given $\mathcal{T}_n$, we construct $\mathcal{T}_{n+1}$ by -- 
\begin{enumeratei}
    \item letting the vertex set of $\mathcal{T}_{n+1}$ to be the union of the vertex set of $\mathcal{T}_n$ and a new vertex $n+1$;
    \item drawing an edge between the vertex $n+1$ and a random vertex of $\mathcal{T}_n$ sampled with probability proportional to $f(D_n(v))\mathbf{1}_{\{j(n)\leq v \leq n\}}$,
\end{enumeratei}
 where $f:\mathbb{N} \to (0,\infty)$ is an attachment function measuring the attractiveness of individuals in the current network based on their degree, $D_n(v)$ is the degree of the vertex $v$ in the tree $\mathcal{T}_n$, and $j:\mathbb{N}\to \mathbb{N}$ is a monotone {non-decreasing} function satisfying $j(n)\leq n$ for all $n \geq 1$.

 In words, the new vertex $n+1$ arrives in the already constructed network $\mathcal{T}_n$ and has information only on the more recent individuals in the sense that it can connect only to  a \emph{young} vertex which arrived at time $j(n)$ or later; in principle it can use the degree of this vertex to make its decision. \textcolor{black}{One should think of $j(n)$ as a `age-preference threshold' for the vertex $n+1$, beyond which it chooses not to develop connections.} 
 In order to develop the salient features of such models, for this paper, we \textcolor{black}{focus on the case} $f(x)=1$ (uniform attachment) and consider the following two regimes for $j(n)$:
\begin{enumeratea}
    \item {\bf Macroscopic regime:} \textcolor{black}{$j(n)=\lfloor \theta n\rfloor $} 
    The term ``macroscopic'' is meant to evoke the mental picture that in the large network limit, new incoming vertices have a positive density of information, in terms of the network size,  to make their choices. In this regime, the model is a specific example of the so-called scaled attachment recursive trees (SARRT) introduced in \cite{devroye2012depth} and whose depth and height asymptotics were studied in the above paper. 
    \item {\bf Mesoscopic regime:} \textcolor{black}{$j(n)=n-\lfloor n^{\beta}\rfloor$} for some $\beta \in (0,1)$.
\end{enumeratea}
The main goal of this paper is proposing the general model above and describing asymptotics in the specific ``uniform attachment with bounded information'' setting. 

\begin{rem}\label{rem:future_work_general_j_nontree}
    \textcolor{black}{One can also consider the case $j(n)=\lfloor n^{\beta} \rfloor$, in which case we expect the model to mostly display similarities with the $j(n)=1$ case, although we believe there are some potential differences. As the latter case has been extensively pursued in the literature, we don't put any focus on this case in this paper, and leave this exploration for future work. Similarly, in the model definition, one can consider the case when the incoming vertex comes in with more than one edge, in which case the obtained networks are not trees anymore, and are more complex. We leave this for future work as well.}
\end{rem}

\subsection{Simulations}\label{sec:sims}

We present simulations in this section. {In the mesoscopic regime, one can observe the phase transition in the global structure at $\beta=1/2$ implied by Theorems~\ref{thm:ghp_meso},~\ref{thm:meso_star} and~\ref{thm:meso_not_crt}.} 
In the macroscopic regime, there does not seem to be an abrupt change in behavior in terms of $\theta$ based on the simulations, but this requires further investigation. 
\begin{figure}[H] \centering \begin{subfigure}[b]{0.30\textwidth} \centering \includegraphics[width=4.5cm]{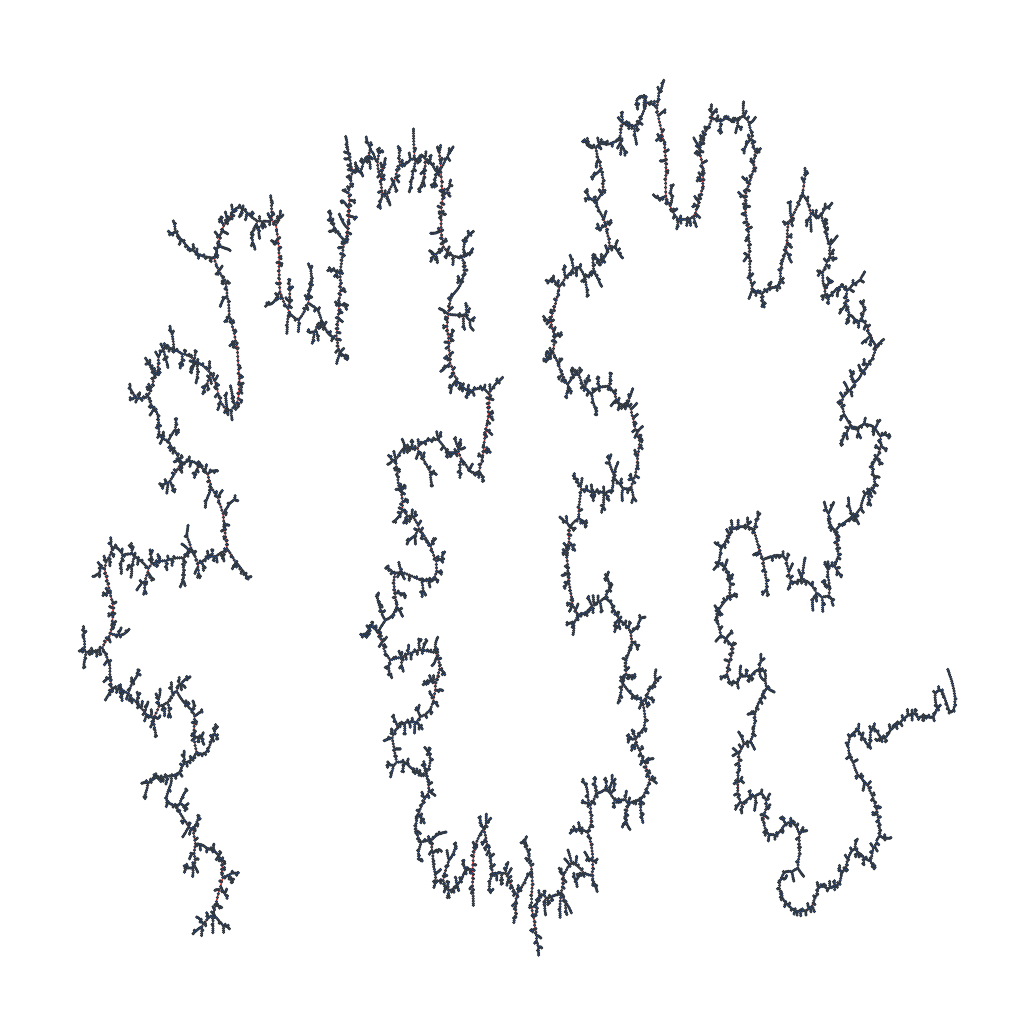} \caption{Mesoscopic regime with 100,000 nodes, $\beta=0.25$} \label{fig:sim_meso_100k_0.25} \end{subfigure} \hfill \begin{subfigure}[b]{0.30\textwidth} \centering \includegraphics[width=4.5cm]{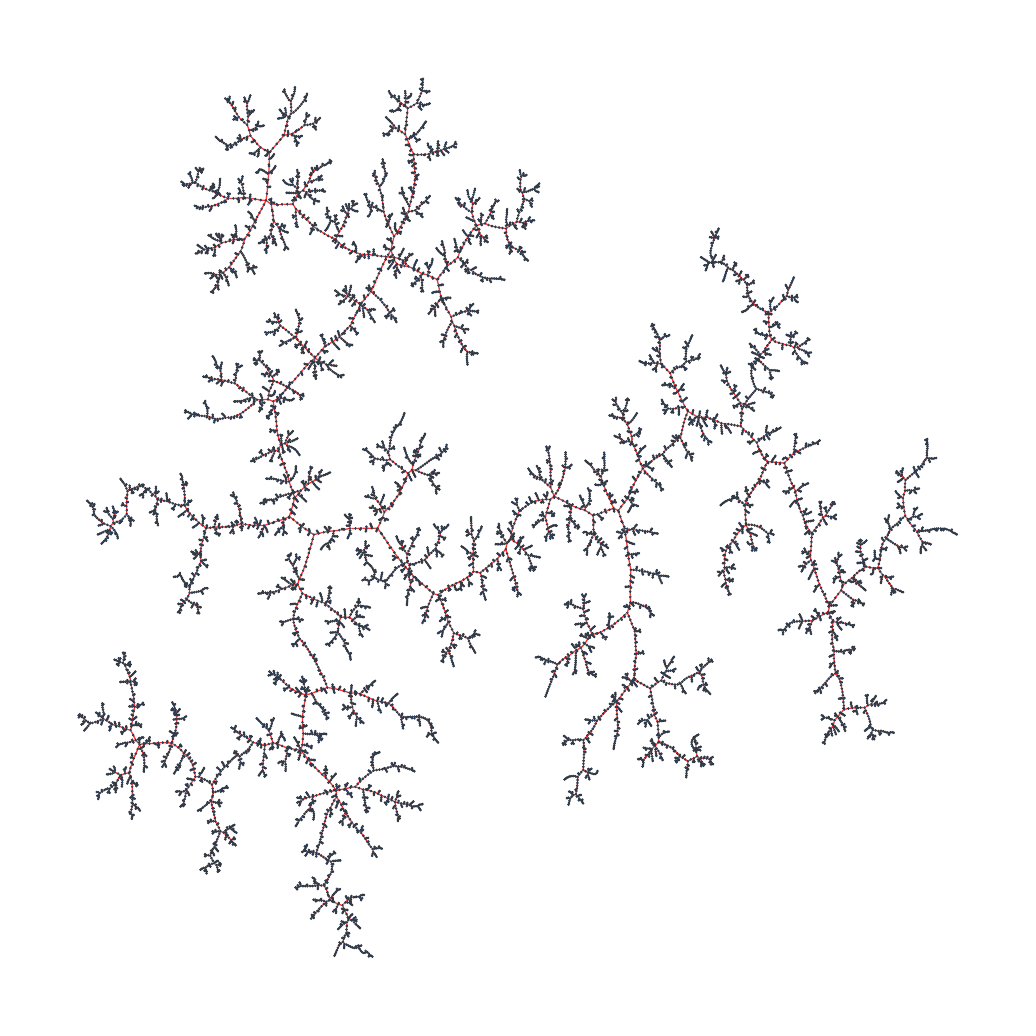} \caption{Mesoscopic regime with 100,000 nodes, $\beta=0.5$} \label{fig:sim_meso_100k_0.5} \end{subfigure} \hfill \begin{subfigure}[b]{0.30\textwidth} \centering \includegraphics[width=4.5cm]{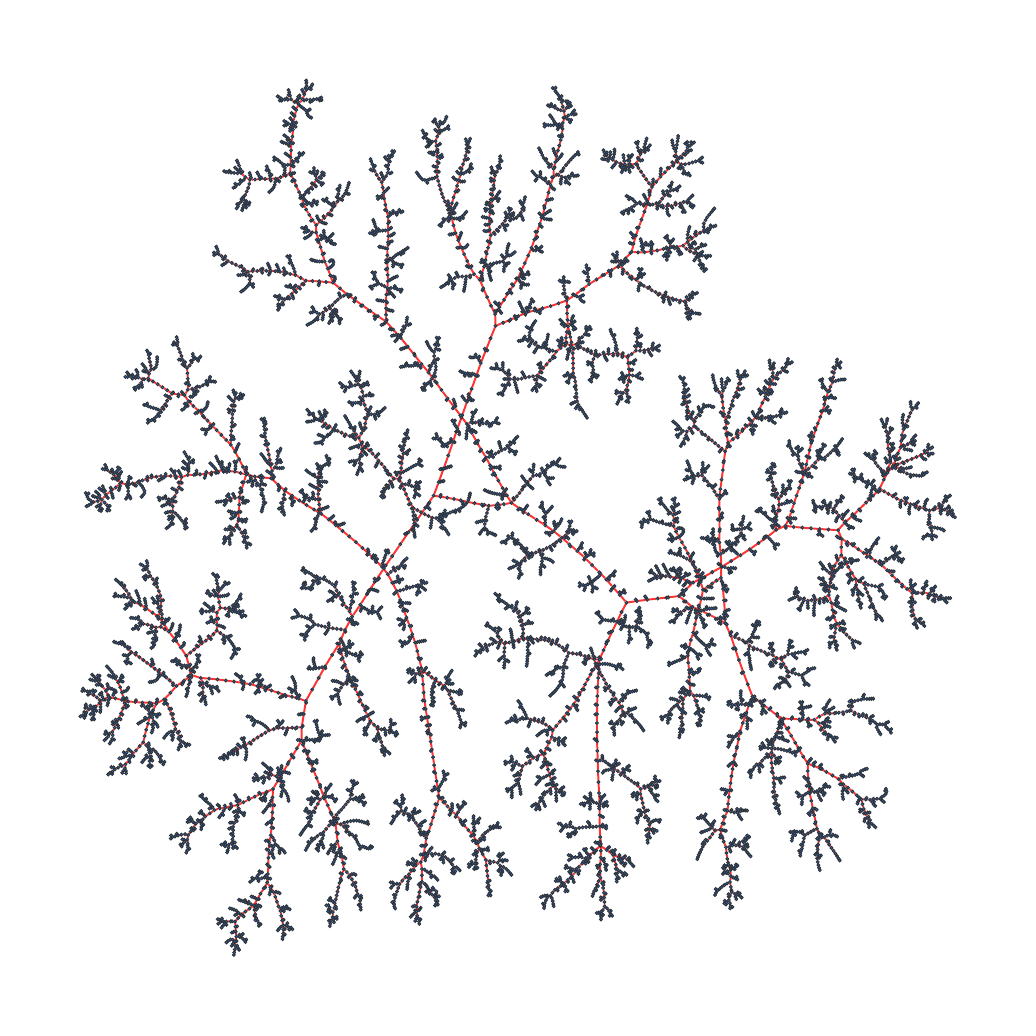} \caption{Mesoscopic regime with 100,000 nodes, $\beta=0.75$} \label{fig:sim_meso_100k_0.75} \end{subfigure} \caption{Pictures of the tree $\cT_n$ in different mesoscopic regimes. As $\beta$ increases and passes through $0.5$, the tree from being more `line-like', becomes more spread out and `fatter', see Theorems \ref{thm:ghp_meso} and \ref{thm:meso_star}.} \label{fig:meso sims} \end{figure} \begin{figure}[H] \centering \begin{subfigure}[b]{0.30\textwidth} \centering \includegraphics[width=4.5cm]{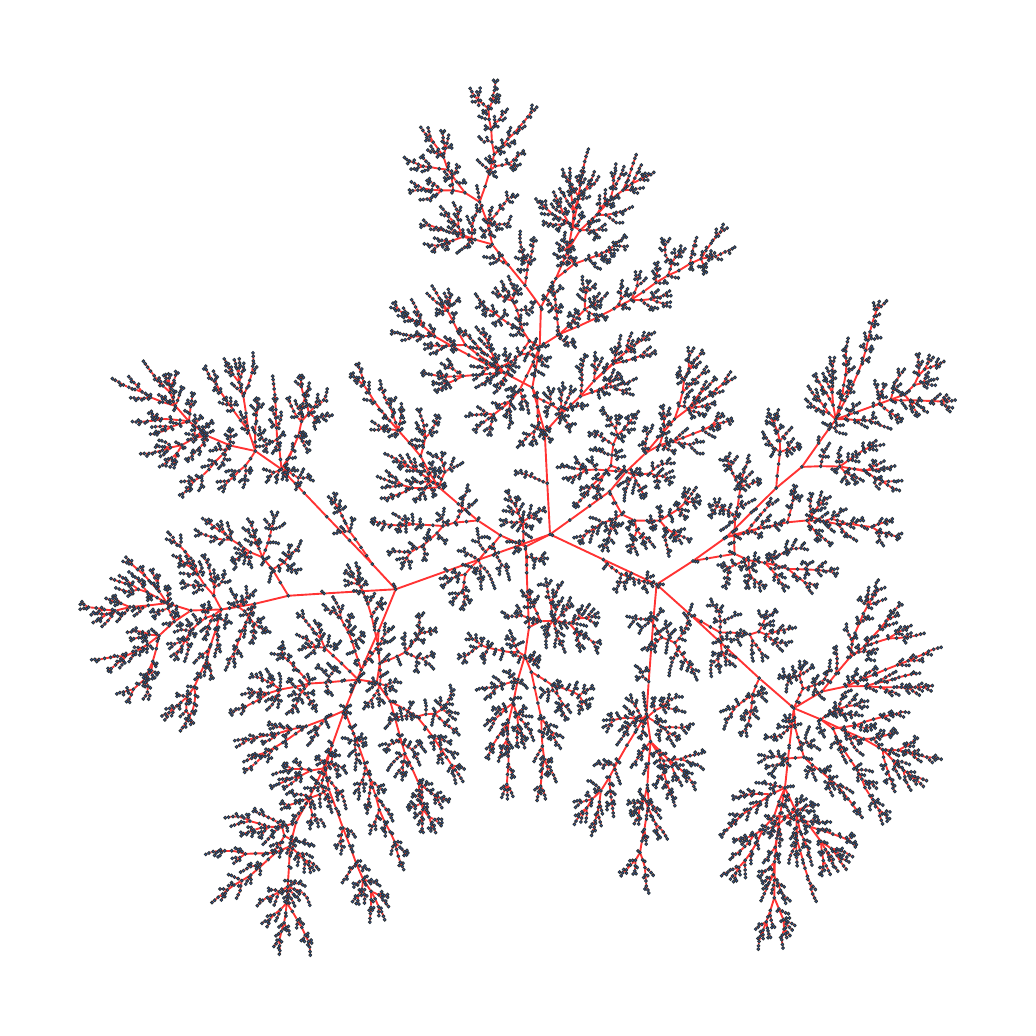} \caption{Macroscopic regime with 100,000 nodes, $\theta=0.25$} \label{fig:sim_macro_100k_0.25} \end{subfigure} \hfill \begin{subfigure}[b]{0.30\textwidth} \centering \includegraphics[width=4.5cm]{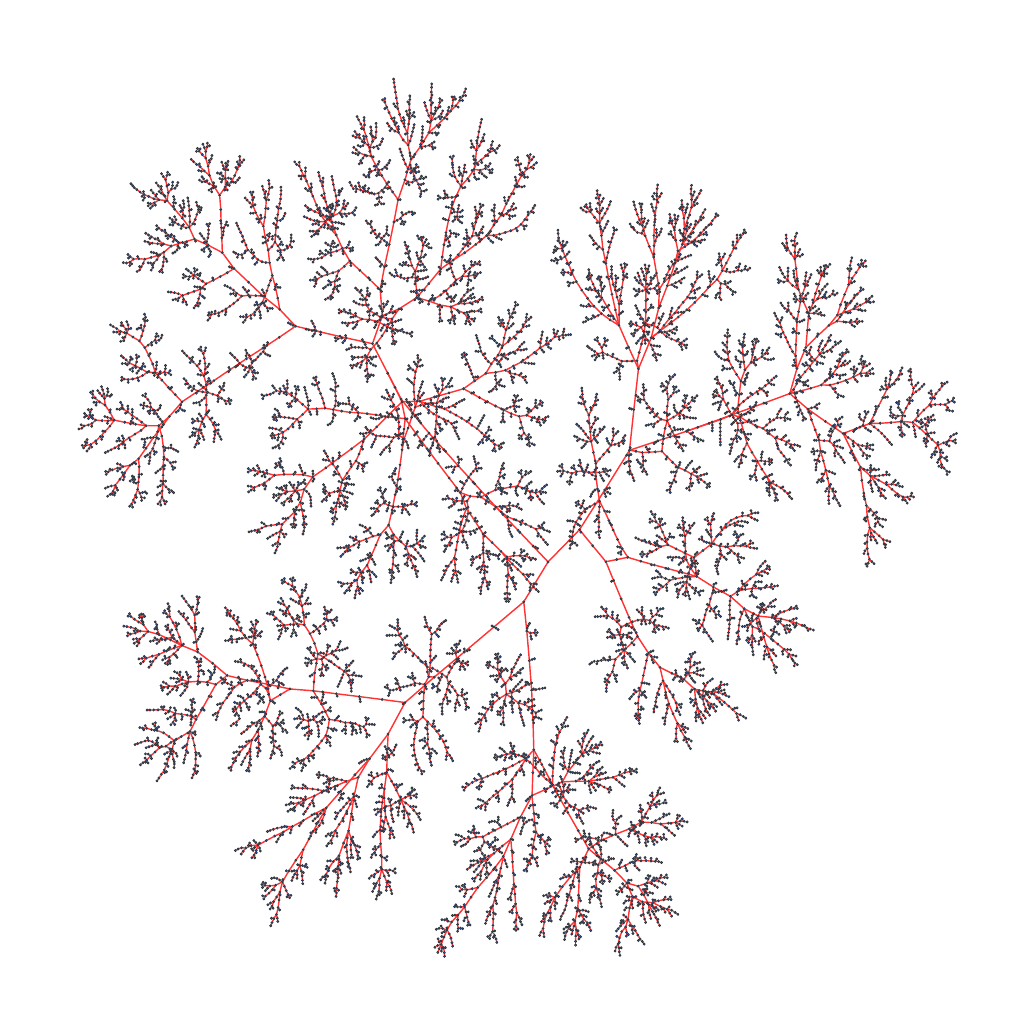} \caption{Macroscopic regime with 100,000 nodes, $\theta=0.5$} \label{fig:sim_macro_100k_0.5} \end{subfigure} \hfill \begin{subfigure}[b]{0.30\textwidth} \centering \includegraphics[width=4.5cm]{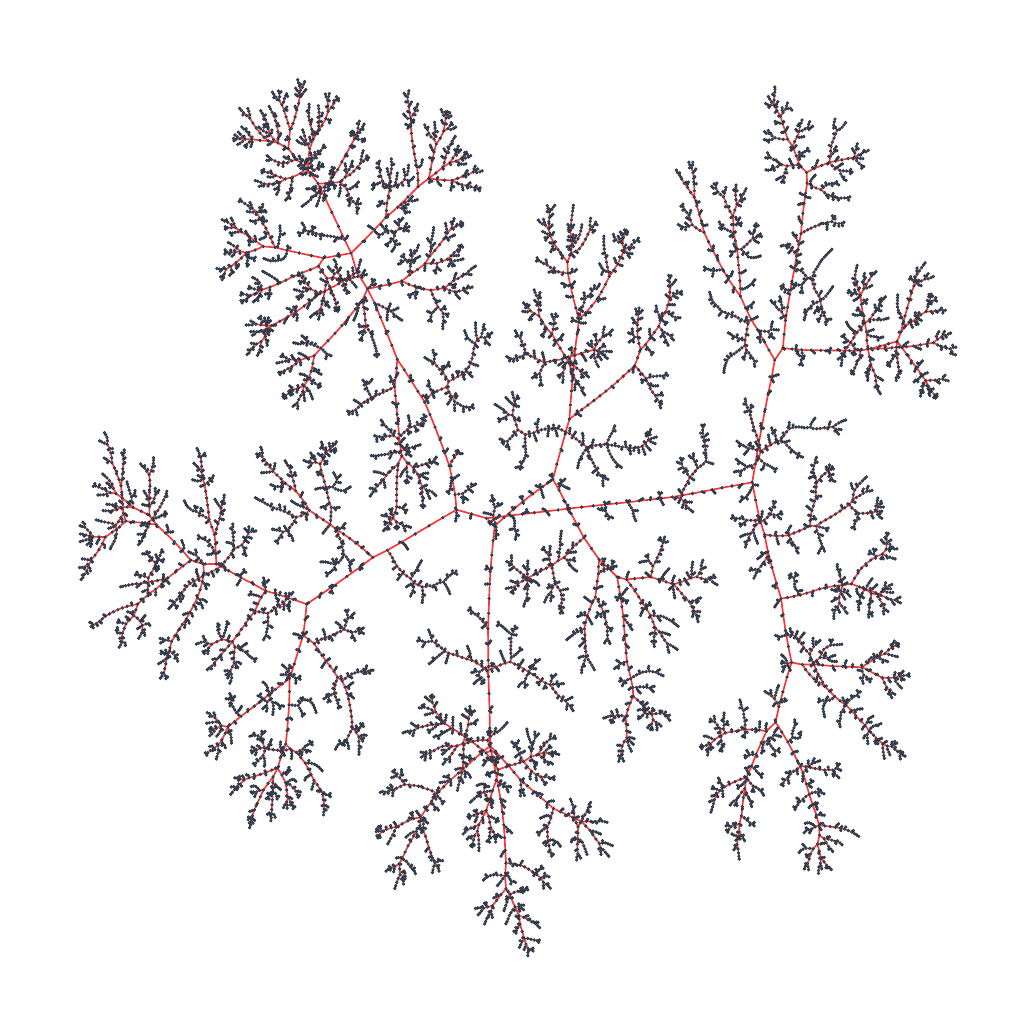} \caption{Macroscopic regime with 100,000 nodes, $\theta=0.75$} \label{fig:sim_macro_100k_0.75} \end{subfigure} \caption{Pictures of the tree $\cT_n$ in different macroscopic regimes.} \label{fig:macro sims} \end{figure}

\medskip

\noindent\textcolor{black}{\textbf{Mathematical notations.} We use $\stod$ for stochastic domination between two real-valued probability measures. For $J\geq 1$ let $[J]:= \set{1,2,\ldots, J}$. If $Y$ has an exponential distribution with rate $\gl$, write this as $Y\sim \exp(\gl)$. Write $\bZ$ for the set of integers, $\bR$ for the real line, $\bN$ for the natural numbers and let $\bR_+:=(0,\infty)$. Write $\convas,\convp,\convd$ for convergence almost everywhere, in probability and in distribution, respectively. For a non-negative function $n\mapsto g(n)$,
we write $f(n)=O(g(n))$ when $|f(n)|/g(n)$ is uniformly bounded, and
$f(n)=o(g(n))$ when $\lim_{n\rightarrow \infty} f(n)/g(n)=0$.
Furthermore, write $f(n)=\Theta(g(n))$ if $f(n)=O(g(n))$ and $g(n)=O(f(n))$.
We write that a sequence of events $(A_n)_{n\geq 1}$
occurs \emph{with high probability} (whp) when $\pr(A_n)\rightarrow 1$ as $n \rightarrow \infty$.}

\medskip

\noindent\textbf{Organization of the paper.} Section~\ref{sec:main-res} contains statements of the main results and a brief overview of related work. For readers unfamiliar with local weak convergence and fringe convergence, precise definitions are given in Section~\ref{sec:local-wll}. Section~\ref{sec:proofs} contains proofs of the main results.

\section{Results}
\label{sec:main-res}

\subsection{\textcolor{black}{Results on} local functionals}
We first start by describing the local limits in the two regimes. 
\subsubsection{\bf Macroscopic regime}\label{subsec:results-macro}

To state our main result, we need some final notation. For $n\geq 1$, let $ \bT_{n} $ be the space of all rooted trees on  $n$ {vertices}. Let $ \bbT =
\cup_{n=0}^\infty \bT_{n} $ be the space of all finite rooted trees.  Here $\bT_{0} = \emptyset $ will be used to represent the empty tree (tree on zero vertices). For any $\bt \in \bbT$, let $\rho_{\bt}$ denote the root of this tree.

\textcolor{black}{Recall $j(n)=\theta n$ for the macroscopic regime where} the parameter $\theta \in (0,1)$. Define $c_\theta  = \log{\theta^{-1}} \in (0,\infty)$. Let $\cN_\theta$ denote a rate $(1-\theta)^{-1}$ Poisson process on the interval $[0,c_\theta]$. Let $\BP_\theta$ denote a continuous time branching process with offspring point process $\cN_\theta$, started with one individual at time $t=0$. \textcolor{black}{Observe that since $c_{\theta}(1-\theta)^{-1}>1$, the branching process survives with positive probability.} Let $T_1$ \textcolor{black}{be an exponentially distributed random variable with mean $1$, which we write as} $T_1\sim \exp(1)$, independent of $\BP_\theta$. Let $\fpm_{\theta}$ denote the distribution of $\BP_{\theta}(T_1)$, viewed as a random finite rooted tree on $\bbT$, where we retain only genealogical information between individuals in $\BP_{\theta}(T_1)$. 

\begin{thm}[Local weak convergence in the macroscopic regime]
\label{thm:lwc-macro}
    Fix $\theta \in (0,1)$. Then $\set{\cT(n):n\geq 1}$ converges in probability in the extended fringe sense (Def.~\ref{def:local-weak}~\eqref{it:fringe-b}) to the unique infinite {\tt sin}-tree with fringe distribution $\fpm_{\theta}$. 
\end{thm}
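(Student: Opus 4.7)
My plan is to exploit a continuous-time branching process (CTBP) embedding of $\cT_n$, in the spirit of the classical Yule embedding for random recursive trees. Define $\BP_\theta$ as in the statement: each individual, upon birth, produces offspring at the points of an independent copy of $\cN_\theta$, a Poisson process of rate $(1-\theta)^{-1}$ on the age interval $[0, c_\theta]$. Under the log-time change $t = \log n$, vertex $v$ corresponds to birth time $\log v$, and the active window $\{\theta n \leq v \leq n\}$ becomes exactly the set of individuals of age at most $c_\theta = \log \theta^{-1}$. The rate at which a given active vertex gains a new child, in log-time, equals $(dn/dt)\cdot 1/\bigl((1-\theta)n\bigr) = (1-\theta)^{-1}$, matching $\cN_\theta$; since all active individuals fire at the same rate, the next-born is uniformly distributed among them, which is precisely the discrete attachment rule. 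A short calculation confirms that the Malthusian parameter of $\BP_\theta$ equals $1$ (since $\int_0^{c_\theta}(1-\theta)^{-1} e^{-s}\,ds = 1$), so $|\BP_\theta(t)|$ grows like $e^t$, consistent with $n \sim e^t$.

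Granting this embedding, fringe convergence follows from Nerman's strong law for general branching processes with characteristics. The age of a uniformly sampled individual at time $t = \log n$ has limiting density $e^{-a}$ on $[0,\infty)$, matching $T_1 \sim \exp(1)$. Conditional on age $a$, the descendant subtree is an independent copy of $\BP_\theta(a)$; integrating in $a$ yields the fringe distribution $\BP_\theta(T_1) = \fpm_\theta$. To upgrade from fringe to extended fringe convergence, I would trace the ancestry of the sampled vertex backward. By the CTBP branching property, the sibling subtrees along this ancestry line are conditionally independent given the ancestors' birth times, each distributed as $\BP_\theta$ evaluated at the appropriate gap. The chain of ancestors is almost surely infinite since $t = \log n \to \infty$ and inter-ancestor gaps are tight. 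Averaging out the gap structure, the fringe subtrees along the spine have distribution $\fpm_\theta$, identifying the limit as the unique infinite sin-tree with this fringe distribution.

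The main technical hurdle is making the CTBP embedding rigorous: the active set in $\BP_\theta$ at the time of its $k$-th birth is determined by ages, not labels, and hence matches $[\theta k, k]$ only up to order $\sqrt{k}$ fluctuations of birth times about $\log k$. These fluctuations are controlled by concentration of the Malthusian martingale $|\BP_\theta(t)| e^{-t}$, which converges almost surely to a positive Kesten--Stigum limit. At the local scale relevant for (extended) fringe convergence, these fluctuations become negligible, so a direct coupling between $\cT_n$ and $\BP_\theta$ observed at the time of its $n$-th birth delivers the required convergence in probability.
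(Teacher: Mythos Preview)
Your high-level picture is correct---the limiting fringe should be $\BP_\theta(T_1)$ precisely because in log-time the active window becomes an age constraint and the per-capita birth rate becomes $(1-\theta)^{-1}$---but the argument as written has a genuine gap at exactly the point you flag as ``the main technical hurdle.'' Unlike the uniform recursive tree, this model admits \emph{no exact} CTBP embedding: the discrete attachment rule selects uniformly from vertices with \emph{label} in $[\lfloor\theta n\rfloor,n]$, whereas $\BP_\theta$ selects by \emph{age}. The discrepancy is not a boundary effect; at every step the two active sets differ by $\Theta(\sqrt n)$ vertices (driven by fluctuations of the Malthusian martingale), and whether this is ``negligible at the local scale'' is precisely what must be proved. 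Saying that Kesten--Stigum controls the martingale and then invoking Nerman does not close this loop: Nerman's theorem applies to the genuine CTBP, not to a process coupled to it with errors of the same order as the local window size. Concretely, you would need a coupling lemma asserting that the fringe of a uniform vertex in $\cT_n$ and in $\BP_\theta(T_n)$ agree with probability $1-o(1)$; this is nontrivial and is essentially the content of the theorem.

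The paper sidesteps the embedding problem entirely. It works in the discrete model throughout: for a fixed starting vertex $n$ it computes the exact joint law of the log-scale inter-arrival times $\tau_{j,k}^n=\log(T_{j,k}^n/T_{j,k-1}^n)$ in the fringe of $n$, and shows via a sequence of explicit approximations (product $\to$ exponential, floor removal, Riemann sum $\to$ integral) and a change of variables that this law converges to i.i.d.\ copies of the inter-arrival times of $\cN_\theta$. This is Proposition~\ref{prop:degree-times-n}; replacing $n$ by $\lceil nU\rceil$ produces the independent $T_1\sim\exp(1)$ factor (Corollary~\ref{cor:degree-times-uniform}), and a continuous mapping argument then yields convergence of the fringe in the \emph{expected} sense. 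The upgrade to convergence in probability and to the extended fringe is not done by analyzing the ancestral line directly as you propose, but by invoking Aldous's abstract machinery: $\fpm_\theta$ is an extremal fringe distribution, so Theorem~\ref{thm:aldous-efr-pfr} gives convergence in probability, and Lemma~\ref{ftoeflemma} then automatically yields the extended fringe limit as the unique {\tt sin}-tree with that fringe law. Your direct ancestral argument would require separately controlling the joint law of sibling subtrees along the spine, which the paper avoids.
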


This result has the following implication. Here,  with some abuse of notation, for any $t\geq 0$,  we use $\cN_\theta(t)$ for the number of points in the point process $\cN_\theta$ and assume $\cN_\theta$ independent of $T_1\sim \exp(1)$ as before. \textcolor{black}{Denote by $N_k(n)$ the number of vertices with degree equal to $k$ in the tree $\cT_n$.}

\begin{cor}
    In the setting of Theorem \ref{thm:lwc-macro}, the degree counts $\set{N_k(n):k\geq 1}$ of the random tree $\cT_n$  as in~\eqref{eqn:deg-count}  satisfy for $k\geq 1$,
\[
\frac{N_k(n)}{n} \convp \prob(\cN_\theta(T_1) =k-1) \text{ as } n\to\infty.
\]
\end{cor}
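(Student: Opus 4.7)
The plan is to deduce this corollary directly from the extended fringe convergence in probability established in Theorem~\ref{thm:lwc-macro}. The point is that $N_k(n)/n$ is the empirical frequency of a local property (having degree $k$), so it is exactly the kind of functional that extended fringe convergence controls.

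First I would introduce the indicator functional $\phi_k : \bbT \to \{0,1\}$ defined by $\phi_k(\bt) = \mathbf{1}\{\rho_\bt \text{ has exactly } k-1 \text{ children in } \bt\}$. For a vertex $v \in \cT_n$, let $\bt_v^{(n)}$ denote its fringe subtree, i.e.\ the subtree on $\{v\}$ together with the descendants of $v$, rooted at $v$. Every vertex $v \geq 2$ of $\cT_n$ has exactly one parent, so $D_n(v)$ equals (number of children of $v$) plus $1$, and hence $\phi_k(\bt_v^{(n)}) = \mathbf{1}\{D_n(v)=k\}$. Vertex $v=1$ has no parent and contributes an $O(1/n)$ discrepancy. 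Therefore
\begin{equation*}
    \frac{N_k(n)}{n} = \frac{1}{n}\sum_{v=1}^n \phi_k\bigl(\bt_v^{(n)}\bigr) + O(1/n).
\end{equation*}

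Next I would invoke extended fringe convergence in probability from Theorem~\ref{thm:lwc-macro}, which asserts that the empirical fringe measure $n^{-1}\sum_{v=1}^n \delta_{\bt_v^{(n)}}$ converges in probability, in the weak topology on probability measures on $\bbT$, to the deterministic measure $\fpm_\theta$. Since $\phi_k$ is bounded and depends only on the root and its immediate children (in particular it is continuous in the local topology on $\bbT$), applying this convergence to $\phi_k$ gives
\begin{equation*}
    \frac{N_k(n)}{n} \;\convp\; \int \phi_k \, d\fpm_\theta \;=\; \prob_{\fpm_\theta}\bigl(\rho \text{ has exactly } k-1 \text{ children}\bigr).
\end{equation*}

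Finally, under $\fpm_\theta$ the tree is distributed as $\BP_\theta(T_1)$ with $T_1 \sim \exp(1)$ independent of $\BP_\theta$, and by construction of the branching process the progenitor produces its offspring at the epochs of $\cN_\theta$ during its lifetime. Hence the number of children of the root at time $T_1$ is precisely $\cN_\theta(T_1)$, and the limiting probability equals $\prob(\cN_\theta(T_1)=k-1)$, as required. The only technical point requiring care is the passage from extended fringe convergence in probability (Def.~\ref{def:local-weak}~\eqref{it:fringe-b}) to convergence of averages of bounded finite-depth functionals such as $\phi_k$; this is a standard consequence of the definition, but I would want to unwind it to make the step rigorous. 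A fallback in case the chosen formalism is too weak is a direct second moment argument: expectation convergence of $\E[N_k(n)/n]$ follows from the fringe limit applied to a single uniform vertex, and the variance bound follows from the pairwise fringe limit together with the observation that two independently chosen uniform vertices have asymptotically independent fringes since with high probability their fringe trees do not overlap.
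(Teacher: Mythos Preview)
Your proposal is correct and matches the paper's own treatment: the paper does not give a separate proof of this corollary but treats it as an immediate consequence of the fringe convergence in Theorem~\ref{thm:lwc-macro}, exactly via the mechanism you describe (see the discussion around~\eqref{eqn:deg-convg-fr} and the last line of the proof of Theorem~\ref{thm:lwc-macro}). One cosmetic remark: you only need fringe convergence (Definition~\ref{def:local-weak}\eqref{it:fringe-a}), not the full extended version, since $\phi_k$ depends only on the root's children.
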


\subsubsection{\bf Mesoscopic regime}

Let $\fpm_{\Poi, 1}$ denote the distribution of a critical Galton-Watson branching process with Poisson mean one offspring on $\bbT$. Recall from \cite{grimmett1980random,aldous-fringe} that if $\cT_n$ is random rooted tree sampled uniformly amongst all $n^{n-1}$ trees on $[n]$ labelled vertices then $\cT_n$ converges in the probabilistic fringe sense to unique {\tt sin}-tree with fringe distribution $\fpm_{\Poi, 1}$. 

\begin{thm}
\label{thm:lwc-meso}
     Consider the sequence of random trees $\set{\cT_n:n\geq 2}$ in the mesoscopic regime with parameter $\beta \in (0,1)$. Then $\set{\cT_n:n\geq 2}$ converges in probability in the extended fringe sense (Def.~\ref{def:local-weak}~\eqref{it:fringe-b}) to the unique infinite {\tt sin}-tree with fringe distribution $\fpm_{\Poi, 1}$. This implies in particular, the degree distribution satisfies the asymptotics for $k\geq 1$,
     \[
\frac{N_k(n)}{n} \convp \frac{e^{-1}}{(k-1)!} \text{ as } n\to\infty,
\]
\end{thm}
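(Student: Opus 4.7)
The plan is to establish extended fringe convergence by analyzing the neighborhood of a uniformly chosen vertex $V_n \sim \mathrm{Unif}([n])$ in $\cT_n$. Since $V_n = \Theta(n)$ with high probability, the analysis reduces to controlling the local graph structure around a typical vertex of macroscopic index $v = \alpha n$ for some $\alpha \in (0,1)$. I will first carry out a direct Poisson approximation for the children of $v$, then extend it recursively to deeper descendants and to ancestors, and finally promote the resulting single-vertex statement to convergence in probability of the empirical neighborhood distribution via a second-moment argument.

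For the fringe at $v = \Theta(n)$, a later vertex $m > v$ is a child of $v$ in $\cT_n$ if and only if $v \in [j(m-1), m-1]$ and $m$ selects $v$ uniformly from that window. The first condition forces $m \in \{v+1, \ldots, v + \lfloor (m-1)^\beta \rfloor + 1\}$, a range of size $\sim v^\beta$, and each such $m$ picks $v$ with probability $\sim v^{-\beta}$; crucially the choices across different values of $m$ are mutually independent. Hence the child count at $v$ is a sum of $\sim v^\beta$ independent Bernoulli trials whose mean tends to $1$, and Le Cam's (or Stein--Chen) inequality delivers the $\mathrm{Poisson}(1)$ limit. The same computation applies one generation deeper for each child of $v$; the only cross-generation coupling is that conditioning on a bounded number of already-assigned vertices in the window shifts each Bernoulli probability by at most $O(v^{-\beta}) = o(1)$. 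Iterating through $r$ generations then yields fringe convergence to $\fpm_{\Poi,1}$; the degree corollary follows at once, since the degree of any non-root vertex equals its number of children plus one.

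For the extended fringe, I will track the ancestral chain $V_n, P_1, \ldots, P_r$ jointly with the descendant tree. Each $P_k$ is uniformly distributed in the child-window of $P_{k-1}$, hence $|P_k - V_n| = O(V_n^\beta)$ and all $P_k$ retain macroscopic index. The same Poisson(1) computation, applied to $P_k$ after conditioning on the known spine edge from $P_k$ to $P_{k-1}$, yields $\mathrm{Poisson}(1)$ children \emph{other than} $P_{k-1}$, reproducing the size-biased $1 + \mathrm{Poisson}(1)$ offspring distribution characteristic of the Kesten sin-tree for $\fpm_{\Poi,1}$. Recursion on the side-subtrees hanging off the spine then gives i.i.d.\ fringes distributed as $\fpm_{\Poi,1}$.

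The principal technical obstacle will be justifying the asymptotic independence of offspring counts around the various spine vertices: the child-windows of $V_n$, of its ancestors $P_1, \ldots, P_r$, and of their children all overlap in an index region of size $O(V_n^\beta)$, and since each late-arriving $m'$ picks exactly one parent, the relevant events are \emph{disjoint} rather than independent. I plan to address this via a Stein--Chen total-variation estimate comparing the joint child-count vector around the spine with a product of independent $\mathrm{Poisson}(1)$ laws, with a bound of order $r^2 V_n^{-\beta}$ that vanishes as $n \to \infty$. To upgrade this single-vertex statement to convergence in probability of the empirical neighborhood distribution, I will use a second-moment computation: the indicators $\ind\{N_r(v) \cong \bt\}$ and $\ind\{N_r(v') \cong \bt\}$ are essentially uncorrelated once $|v - v'| \gg v^\beta$, because their $r$-neighborhoods then live in disjoint index intervals and evolve independently under the attachment dynamics.
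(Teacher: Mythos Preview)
Your plan is sound and would succeed, but it diverges from the paper's route in two places. First, to upgrade from ``fringe of a single uniform vertex converges'' to convergence of the empirical fringe measure in probability, you propose a direct second-moment computation exploiting near-independence of $r$-neighborhoods of index-separated vertices. The paper instead proves only convergence in the \emph{expected} fringe sense (Definition~\ref{def:local-weak}\eqref{it:fringe-exp}) and then invokes Aldous's extremality criterion (Theorem~\ref{thm:aldous-efr-pfr}): since $\fpm_{\Poi,1}$ is an extremal fringe distribution, expected convergence automatically yields convergence in probability. Second, you propose to obtain the extended fringe limit by explicitly tracking the ancestral spine $V_n,P_1,\ldots,P_r$ and verifying by hand that each spine vertex has $1+\mathrm{Poisson}(1)$ children and independent $\fpm_{\Poi,1}$ side-trees. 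The paper bypasses this entirely: once fringe convergence to $\fpm_{\Poi,1}$ is established and $\fpm_{\Poi,1}$ is checked to be a fringe distribution in the sense of Definition~\ref{fringedef}, Lemma~\ref{ftoeflemma} delivers extended fringe convergence to the associated {\tt sin}-tree for free. Your approach is more self-contained and makes the Kesten-tree structure of the limit explicit, at the cost of the Stein--Chen dependency control and the variance estimate; the paper's approach is shorter because it offloads both steps onto Aldous's general framework. On the Poisson approximation itself (your first paragraph), the two arguments are essentially identical; the paper packages it as Lemma~\ref{lem:poiss-approx}.
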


\begin{rem}
    The above result thus implies that in the mesoscopic regime, the local weak limit of the sequence of trees, for any choice of $\beta \in (0,1)$ is the same as the local weak limit of a the uniform random tree (alternatively a uniformly sampled random spanning tree from the complete graph) as shown in \cite{grimmett1980random} and further insensitive to the choice of $\beta$. 
\end{rem}

\subsection{\textcolor{black}{Results on global functionals}}

\textcolor{black}{For a rooted tree $T$ with root $\rho$, we denote the height of $T$ by $h(T)$, where by definition
\begin{align*}
    \cH(T)=\sup\{d_T(v,\rho):v \in T\},
\end{align*}
where $d_T$ denotes the (graph) distance of $v$ from $\rho$. From here on, throughout the paper, we denote by $H_n$ the height $\cH(\cT_n)$ of the tree $\cT_n$. {As new incoming vertices prefer to connect to late arrived individuals (as quantified by $j(n)$), it is natural to expect that the height of the obtained trees should be stochastically larger than the case $j(n)=1$, in which case it turns out that the tree has height about $\log n$, see e.g., Pittel \cite{pittel1994note}. But how \emph{tall} does it get? In this section, we answer this question for the two regimes we consider. We also establish a phase transition on the global geometry of our trees in the mesoscopic regime.}
}

Before diving into the results, let us describe the non-obvious findings in each regime starting with the macroscopic regime. Recall the branching process $\BP_\theta$ that arose to describe the local weak limit in this regime.  Note that this branching process can go extinct with positive probability, namely $\pr(|\BP_{\theta}(\infty)| < \infty) >0$. Here we write $|\BP_\theta(\infty)|$ for the final size of the process.   

For each $n\geq 1$, let $T_n = \inf\set{t: |\BP_{\theta}| = n}$ for the time for this process to get to size $n$ with the convention $T_n = \infty$ if $|\BP_\theta(\infty)| < n$, i.e., if the branching process dies out before it grows to size $n$. Write $\cT_{n,\theta}^{\BP} = \BP_{\theta}(T_n)$ to denote the random tree representing the genealogical information of the individuals present in the population at this time and consider the sequence of trees $\set{\cT_{n,\theta}^{\BP}: n\geq 1}$. This sequence of trees is {\bf growing} only on the set $\BP_\theta(\infty) = \infty$ and thus, this sequence of trees is different in distribution from $\set{\cT(n):n\geq 1}$. The next proposition follows easily from classical results and proof techniques in Kingman and Pittel \cites{kingman1975first,pittel1994note} and can be found in Section \ref{sec:proofs}. We need some additional notation. Define \textcolor{black}{for any $\lambda>1$,}
\begin{equation}
    \label{eqn:def-ht-con}
    \phi(\lambda) = \frac{1-\theta^\lambda}{\lambda(1-\theta)}, \qquad \mu(a) = \inf_{\lambda >1}{\phi(\lambda)e^{\lambda a}}, \qquad \kappa(\theta) = \sup\set{a: \mu(a) <1}. 
\end{equation}

\begin{prop}
\label{prop:bp-height}
  On the set $\set{|\BP_\theta(\infty)| = \infty}$,  the height of the branching process genealogies $\cH(\cT_{n,\theta}^{\BP})$ satisfy $\cH(\cT_{n,\theta}^{\BP})/\log{n} \convas [\kappa(\theta)]^{-1}$ where $\kappa(\theta) $ is as in \eqref{eqn:def-ht-con}. 
\end{prop}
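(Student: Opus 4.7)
The plan is to combine two classical ingredients from the theory of supercritical Crump--Mode--Jagers (CMJ) branching processes: Nerman's theorem on population growth and Kingman's first-birth theorem. Both apply very cleanly here because $\cN_\theta$ is Poisson on the compact interval $[0,c_\theta]$, so every integrability/moment condition required in the classical statements holds trivially. The first step is to identify the Malthusian parameter by a direct computation:
\[
\E\int_0^\infty e^{-\lambda t}\, d\cN_\theta(t) \;=\; \frac{1}{1-\theta}\int_0^{c_\theta} e^{-\lambda t}\, dt \;=\; \phi(\lambda),
\]
and noting that $\phi(1)=(1-\theta)/(1-\theta)=1$. Thus the Malthusian parameter is $\alpha=1$. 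Since $\phi$ is strictly decreasing with $\phi(0)=c_\theta/(1-\theta)>1$, the process is supercritical, and Nerman's theorem yields $e^{-t}|\BP_\theta(t)|\convas W$ for some random $W\geq 0$ with $\{W>0\}=\{|\BP_\theta(\infty)|=\infty\}$ a.s. Inverting this on the survival event gives $T_n/\log n\convas 1$.

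Next, I would invoke Kingman's first-birth theorem. Writing $B_k$ for the time of the first birth in generation $k$, Kingman's theorem gives $B_k/k\convas a^*$ on survival, where $a^*$ is identified by the variational formula $a^*=\sup\{a:\inf_{\lambda\geq 0}\phi(\lambda)e^{\lambda a}<1\}$. For $a>0$, the infimum of $\phi(\lambda)e^{\lambda a}$ can only be below $1$ when $\phi(\lambda)<1$, i.e. for $\lambda>1$, so the restriction $\lambda>1$ in the definition of $\mu(a)$ in \eqref{eqn:def-ht-con} is harmless, and $a^*=\kappa(\theta)$. Since individuals in $\BP_\theta$ persist genealogically, the height of $\BP_\theta(t)$ is exactly $\max\{k:B_k\leq t\}$, and a standard dual-extraction argument (sandwiching $\cH(\BP_\theta(t))$ between $t/(\kappa(\theta)\pm\eps)$ using $B_k/k\to \kappa(\theta)$) yields
\[
\cH(\BP_\theta(t))/t \;\convas\; 1/\kappa(\theta) \qquad \text{on } \{|\BP_\theta(\infty)|=\infty\}.
\]

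Combining the two asymptotics on the survival event,
\[
\frac{\cH(\cT^{\BP}_{n,\theta})}{\log n} \;=\; \frac{\cH(\BP_\theta(T_n))}{T_n}\cdot\frac{T_n}{\log n} \;\convas\; \frac{1}{\kappa(\theta)},
\]
which is the claim. The only real obstacle is a bookkeeping one: one must cite a version of Kingman/Biggins that applies to CMJ processes whose offspring point process is not a finite sum of point masses. In our setting, because $\cN_\theta$ is supported on a bounded interval, the exponential moment conditions in Biggins' extension of Kingman's theorem, as well as the $x\log x$ condition for Nerman's strong law, are immediate; so both theorems apply as stated. This is essentially the argument already present in Kingman~\cite{kingman1975first} and Pittel~\cite{pittel1994note}, specialized to the point process $\cN_\theta$.
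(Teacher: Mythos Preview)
Your proposal is correct and follows essentially the same route as the paper: identify the Malthusian parameter as $1$ so that $T_n/\log n\convas 1$ on survival, invoke Kingman's first-birth theorem to get $B_k/k\convas\kappa(\theta)$, and then sandwich the height via $B_{\cH}\le T_n\le B_{\cH+1}$ (equivalently your $\cH(\BP_\theta(t))=\max\{k:B_k\le t\}$). The paper presents exactly this argument, citing Lemma~\ref{lem:malthus} for the growth asymptotics and \cite{kingman1975first,pittel1994note} for the first-birth step.
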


The next result shows that, not just local weak limits, but even the height of the original tree process has the same asymptotics as the branching process driven system.  

\begin{thm}
\label{thm:ht-macro}
The height \textcolor{black}{sequence $(H_n)_{n \geq 1}$} of the original growing trees $\set{\cT(n):n\geq 1}$ in the macroscopic regime satisfy $H_n/\log{n} \convas [\kappa(\theta)]^{-1}$ where $\kappa(\theta)$ is as in Prop. \ref{prop:bp-height}.
\end{thm}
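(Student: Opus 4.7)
The plan is to realize the macroscopic tree as a (small perturbation of a) scaled attachment random recursive tree (SARRT) and then apply the height theorem of Devroye, Fawzi, and Fraiman~\cite{devroye2012depth}. Recall that a SARRT with scaling distribution $F$ on $[0,1]$ is built by drawing iid $V_2,V_3,\ldots \sim F$ and declaring the parent of vertex $i$ to be $\lceil i V_i\rceil$.

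First I would construct the coupling. Drawing $V_{n+1}\sim \mathrm{Uniform}(\theta,1]$ independently of the past yields $\lceil (n+1)V_{n+1}\rceil$ that is (approximately) uniform on $\{\lceil \theta(n+1)\rceil,\ldots,n+1\}$. This differs from our dynamics (parent uniform on $\{\lceil \theta n\rceil,\ldots, n\}$) only at an $O(1)$ number of boundary indices; a straightforward coupling argument, rerouting these boundary edges if necessary, shows that the heights of the two trees differ by at most $O(1)$ almost surely, so they have the same $H_n/\log n$ limit.

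Next I would invoke the main height result of~\cite{devroye2012depth}: for a SARRT with scaling law $F$, $H_n/\log n \to 1/c^*_F$ a.s., with $c^*_F$ determined by a Cram\'er/Biggins-type variational formula applied to the distribution of $-\log V$. It then remains to identify $1/c^*_F$ with $1/\kappa(\theta)$. The function $\phi(\lambda)=(1-\theta^\lambda)/(\lambda(1-\theta)) = \int_0^{c_\theta} e^{-\lambda t}\,(1-\theta)^{-1}\,dt$ is precisely the Laplace transform of the intensity of $\cN_\theta$; equivalently, it is the generating function that governs branching-random-walk computations along ancestral lines of the SARRT, because a single ``parent-to-child'' step in the SARRT corresponds (up to reparametrization) to a point of $\cN_\theta$. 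The Legendre-type infimum $\mu(a)=\inf_{\lambda>1}\phi(\lambda)e^{\lambda a}$ therefore encodes the exponential decay rate of the probability that a uniformly chosen leaf has an ancestor $a\log n$ generations up with index still of order $n$, and $\kappa(\theta)=\sup\{a:\mu(a)<1\}$ is the deepest-path velocity coming out of Devroye's variational formula.

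The main obstacle is the bookkeeping of conventions: \cite{devroye2012depth} phrases the answer in terms of the scaling variable $V$, while $\phi,\mu,\kappa$ are stated via the CTBP intensity of $\cN_\theta$. Matching the two requires carefully translating between $-\log V$ and the birth-time density of $\cN_\theta$, and between ``time'' in $\BP_\theta$ and ``generations'' in $\cT_n$. Proposition~\ref{prop:bp-height} offers an independent check: the CTBP genealogy achieves height $[\kappa(\theta)]^{-1}\log n$, matching both the SARRT calculation and Theorem~\ref{thm:lwc-macro}, which establishes that $\cT_n$ has the same local structure as $\BP_\theta$ near a typical vertex.
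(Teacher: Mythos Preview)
Your approach is essentially identical to the paper's: recognize the macroscopic model as a SARRT with $V\sim\mathrm{Uniform}[\theta,1]$, invoke \cite{devroye2012depth}*{Theorem~2} to obtain $H_n/\log n\to\alpha_{\max}$, and then verify $\alpha_{\max}=[\kappa(\theta)]^{-1}$. The paper simply asserts the model is a special case of SARRT without discussing the $O(1)$ boundary discrepancy you flag, and declares the constant identification ``easy to check'' rather than spelling out the Laplace-transform dictionary you outline.
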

{Thus, in the macroscopic regime, the effect of $j(n)$ is not strong enough to increase the height (in order) from the $j(n)=1$ case.}

\begin{rem}
    The convergence of the properly scaled height was already proven in \cite{devroye2012depth}. The main contribution of the above result is showing the relationship between the limit constants derived in \cite{devroye2012depth} and the so-called ``first birth'' problem in continuous time branching processes. The paper \cite{devroye2012depth} focuses on the height and depth of a general class of random tree growth models;  Section \ref{sec:disc} shows how the local weak limits of these models can be derived  and described using associated continuous time branching processes, using the proof techniques in this paper. 
\end{rem}

Now recall that Theorem \ref{thm:lwc-meso} implies that in the mesoscopic regime,  the local weak limit is insensitive to the choice of the parameter $\beta$ and is in fact the same as the limit of a uniform random tree. The next results shows that the story is quite different for the height and in fact for the global scaling of distances. 
\begin{thm}\label{thm:ht_meso}
    \textcolor{black}{Consider the mesoscopic regime with $j(n)=n-n^{\beta}$ for a fixed paramter $\beta\in (0,1)$. The height $H_n$ of the tree $\cT_n$ satisfies $H_n/n^{1-\beta}\convp \frac{2}{1-\beta}$ as $n \to \infty$.}
\end{thm}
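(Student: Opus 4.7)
The strategy is to analyze, for each starting vertex $m\in[n]$, the \emph{ancestor-index chain} $(A_k^{(m)})_{k\geq 0}$ defined by $A_0^{(m)}=m$ and $A_{k+1}^{(m)}=$ parent of $A_k^{(m)}$. Because every vertex's parent was sampled independently and uniformly from its admissible window at insertion time, this is a genuine Markov chain: conditional on $A_k^{(m)}=a\geq 2$, the jump $X_k:=A_k^{(m)}-A_{k+1}^{(m)}$ is uniform on $\{1,\ldots,\lfloor (a-1)^\beta\rfloor+1\}$, with $\E[X_k\mid A_k^{(m)}=a]\sim a^\beta/2$. Treating $k$ as continuous yields the ODE $\dot a(k)=-a(k)^\beta/2$, whose solution hits zero at $k=2m^{1-\beta}/(1-\beta)$. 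Since the depth of vertex $m$ equals the hitting time of $\{1\}$ by this chain, this identifies the predicted depth of $m$; maximizing over $m\leq n$ at $m\asymp n$ gives the stated height.

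To rigorously execute this, I plan to use the Lyapunov function $Y_k:=(A_k^{(m)})^{1-\beta}$. A first-order Taylor expansion yields $\E[Y_{k+1}-Y_k\mid A_k^{(m)}=a]=-(1-\beta)/2+O(a^{\beta-1})$ together with the uniform bound $|Y_{k+1}-Y_k|\leq C$ whenever $a\geq 2$, so $Z_k:=Y_k+(1-\beta)k/2$ is an approximate martingale with $O(1)$-bounded increments. Azuma--Hoeffding then gives, for any $\delta>0$ and as long as the chain is still in the large-$a$ regime,
\[
\prob\Bigl(\sup_{k\leq K}|Z_k-Z_0|>K^{1/2+\delta}\Bigr)\leq 2\exp\bigl(-K^{2\delta}/C'\bigr).
\]
Applied with $K_m^\pm=\lceil(2/(1-\beta)\pm\epsilon)m^{1-\beta}\rceil$, this shows that starting from $A_0^{(m)}=m$ the chain reaches $\{1\}$ in at most $K_m^+$ steps whp and in at least $K_m^-$ steps whp. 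A union bound over $m\in[n^\alpha,n]$ for any $\alpha\in(0,1-\beta)$ --- the tails $\exp(-n^{\Omega(1)})$ easily beat the factor $n$ --- combined with the trivial bound $\mathrm{depth}(m)\leq m$ for $m<n^\alpha$ gives $H_n\leq (2/(1-\beta)+\epsilon)n^{1-\beta}$ whp. The matching lower bound follows by applying the same concentration to the single chain starting at $m=n$, since $H_n\geq \mathrm{depth}(n)$.

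The main obstacle is the multi-scale nature of the problem: both the mean-drift formula and the bounded-increment bound for $Y_k$ degrade when $a$ is of constant order, yet the chain must traverse this small-$a$ regime before reaching $\{1\}$. I would resolve this through a dyadic decomposition of the trajectory into stages $\mathcal{S}_i:=\{A_k^{(m)}\in[2^{-(i+1)}m,2^{-i}m]\}$, running the martingale argument separately on each stage. The expected time spent in $\mathcal{S}_i$ is $\Theta((2^{-i}m)^{1-\beta})$ with fluctuation $O((2^{-i}m)^{(1-\beta)/2+\delta})$, so summing the geometric series recovers the leading constant $2/(1-\beta)$ and a deterministic $o(n^{1-\beta})$ error term. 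The final stage $a=O(n^{\alpha'})$ (for any tiny $\alpha'\in(0,1-\beta)$) contributes at most $n^{\alpha'}=o(n^{1-\beta})$ additional steps deterministically, and hence does not affect the leading order.
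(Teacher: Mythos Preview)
Your proposal is correct and follows the same high-level architecture as the paper: concentrate the ancestor-index chain starting from a generic vertex $m$, pass through a multi-scale argument to handle the degradation at small labels, and then union-bound over $m$ to convert depth asymptotics into height asymptotics. The paper's Steps (i)--(iii) correspond precisely to your three paragraphs.

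The genuine difference is in how the concentration (Step (i)) is obtained. You linearize from the outset via the Lyapunov function $Y_k=(A_k^{(m)})^{1-\beta}$, reducing the problem to an approximate bounded-increment random walk with drift $-(1-\beta)/2$, and then invoke Azuma--Hoeffding. The paper instead embeds the chain into continuous time via independent Poisson processes, writes a semimartingale decomposition, compares the compensator to the ODE $f_\beta'=-\tfrac12 f_\beta^\beta$ via Gronwall, and bounds the martingale part using a Freedman-type inequality. Your route is more elementary and self-contained; the paper's route yields a process-level statement (uniform closeness of $n^{-1}L_n(\lfloor tn^{1-\beta}\rfloor)$ to $f_\beta(t)$) that is reused verbatim in the proofs of the global phase-transition Theorems~\ref{thm:ghp_meso}--\ref{thm:meso_not_crt}. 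For the multi-scale refinement, your dyadic decomposition and the paper's ``hopping'' argument (iterating the concentration estimate $K\log n$ times, each hop shrinking the label by a factor $\alpha$) are equivalent ways of summing a geometric series in the scale.

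One minor remark: the bounded-increment property $|Y_{k+1}-Y_k|\le C$ in fact holds uniformly for all $a\ge 2$ (by the mean value theorem, since the jump $X_k\le a^\beta+O(1)$), so the only quantity that actually degrades at small $a$ is the $O(a^{\beta-1})$ drift error. Since these errors sum to $O(\log m)$ along the whole trajectory, the dyadic decomposition is not strictly necessary, but it does no harm.
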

{Thus, we note that in the mesoscopic regime, the effect of $j(n)$ is strong enough to completely change the height behavior from the $j(n)=1$ case.}

{
We delve deeper into the global structure in the mesoscopic regime next. The following three theorems establish a phase transition in the global structure of the graph in the mesoscopic regime at $\beta=1/2$. Informally, for $\beta<1/2$, the global structure approximates a line. For $\beta>1/2$ the tree contains many disjoint long paths, close in length to the height of the tree, consisting of the ancestral lines of the youngest vertices. For $\beta=1/2$, the ancestral lines of the youngest vertices all have length that approximate the height of the tree, but now they meet or \emph{coalesce} at a  height that is random on the leading order. Observe that this phase transition is not observed in the local limit, nor in the height asymptotics in Theorems~\ref{thm:lwc-meso} and~\ref{thm:ht_meso}. For the relevant relatively standard definitions of Gromov-Hausdorff topology that are used in the next theorems, we refer the reader to \cite{burago2001course}, \cite[Section 3]{gall2011scaling}. }

{
\begin{thm}\label{thm:ghp_meso}
In the mesoscopic regime when $\beta\in (0,1/2)$ we have 

\[\frac{1-\beta}{2} n^{-(1-\beta)}\mathcal{T}_n \convp L  \]
in the Gromov--Hausdorff topology, with $L$ a line segment of length $1$. 
\end{thm}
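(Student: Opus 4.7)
The plan is to establish Gromov--Hausdorff convergence by constructing the explicit correspondence $v\leftrightarrow(v/n)^{1-\beta}\in[0,1]$ and controlling its distortion. The required estimates are (i) pointwise depth concentration: $d(v)=\tfrac{2}{1-\beta}v^{1-\beta}(1+o_P(1))$ uniformly over $v\ge\delta n$, and (ii) for $u<v$ with $u,v\ge\delta n$, the most recent common ancestor $\mathrm{MRCA}(u,v)$ has depth $d(u)-o_P(n^{1-\beta})$. Together these yield
\begin{align*}
d(u,v)=d(v)-d(u)+o_P(n^{1-\beta})=\tfrac{2}{1-\beta}\bigl(v^{1-\beta}-u^{1-\beta}\bigr)+o_P(n^{1-\beta}),
\end{align*}
which matches $|(v/n)^{1-\beta}-(u/n)^{1-\beta}|$ after rescaling by $\rho_n=\tfrac{1-\beta}{2}n^{-(1-\beta)}$.

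For (i), write the ancestral line $v=v_0>v_1>\cdots$; by construction $v_{k+1}$ is uniform in $\{v_k-\lfloor v_k^\beta\rfloor,\dots,v_k-1\}$. Setting $Y_k:=v_k^{1-\beta}$, a Taylor expansion gives $\E[Y_{k+1}-Y_k\mid v_k]=-\tfrac{1-\beta}{2}+O(v_k^{-\beta})$ with conditional variance $O(1)$. A Freedman-type martingale concentration bound combined with a dyadic union bound over $v\ge\delta n$ yields the claim; this is essentially a uniform-in-$v$ strengthening of the argument behind Theorem~\ref{thm:ht_meso}.

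For (ii), first follow the ancestral line of $v$ until it first enters $[1,u]$, arriving at a vertex $w_v$ with $u-w_v\le u^\beta(1+o_P(1))$. Now run the ancestral lines from $u$ and from $w_v$ in parallel, at each step advancing whichever is currently higher. An induction shows the gap between the two paths stays bounded by $u^\beta$ throughout the coupled descent. Conditional on the current positions $A>B$, the higher path picks its parent uniformly on a window of width $\sim A^\beta\le u^\beta$, and the probability this parent equals $B$ is $\sim 1/A^\beta\ge 1/u^\beta$. Hence the coalescence time is stochastically dominated by $\mathrm{Geom}(1/u^\beta)$ and is $O(u^\beta\log n)$ with high probability; during these steps the depth advances by $O(u^\beta\log n)=o(n^{1-\beta})$, \textbf{crucially} using $\beta<1/2$. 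A union bound over an $\eps$-net of pairs $(u,v)\in[\delta n,n]^2$ makes the estimate uniform.

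Finally, vertices in $[1,\delta n]$ have rescaled depths $\le\delta^{1-\beta}+o_P(1)$ by (i), so for any pair with at least one endpoint below $\delta n$ both the rescaled tree distance and the target distance lie in $[0,2\delta^{1-\beta}+o_P(1)]$. Hence the correspondence $\cR_n=\{(v,(v/n)^{1-\beta}):v\in[n]\}$ has projections that are $o(1)$-dense in $\cT_n$ and in $[0,1]$, with distortion $o_P(1)+O(\delta^{1-\beta})$; sending $n\to\infty$ and then $\delta\to 0$ gives the claimed convergence. The main obstacle is (ii): the two ancestral lines share the underlying tree and are not literally independent, so the coupling has to be set up with care, and one must verify that the coalescence indeed occurs on a depth-scale strictly smaller than the height --- which is precisely the condition $\beta<1/2$ and is what drives the phase transition advertised in the surrounding discussion.
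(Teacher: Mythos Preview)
Your proposal is correct in its essential mechanism and shares the key insight with the paper: ancestral lines coalesce in time stochastically dominated by a $\mathrm{Geom}(cn^{-\beta})$ variable, which is $o(n^{1-\beta})$ precisely when $\beta<1/2$. However, the paper's route is considerably more economical. Rather than building the correspondence $v\leftrightarrow (v/n)^{1-\beta}$ and controlling its distortion pairwise, the paper fixes the single ancestral spine $P_n$ from $1$ to $n$, conditions on it, and shows that for every vertex $m$ the ancestral line of $m$ hits $P_n$ within $\mathrm{Geom}(n^{-\beta})$ steps. The clean observation driving this is that $[j(m-1),m-1]\cap P_n\neq\emptyset$ for \emph{every} $m$ (by monotonicity of $j$), so at each step the next revealed ancestor lands on $P_n$ with probability at least $n^{-\beta}$, regardless of the current label. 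A single union bound over the $n$ vertices gives $d_H(\cT_n,P_n)\le n^{\beta+\varepsilon}=o(n^{1-\beta})$ with high probability; since $\rho_n P_n$ is a segment of length $1+o_P(1)$ by Proposition~\ref{prop:dist_limit}, Gromov--Hausdorff convergence follows at once. Your parallel-exploration coupling and pairwise MRCA control are essentially the machinery the paper develops for the $\beta\ge 1/2$ theorems; deploying it here is valid but heavier than necessary, and the conditioning-on-one-path trick you allude to at the end is exactly how the paper sidesteps the dependence issue you flag.

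One slip in your boundary case: if $u\le\delta n$ but $v$ is close to $n$, neither the rescaled tree distance $\rho_n d(u,v)$ nor the target $|(v/n)^{1-\beta}-(u/n)^{1-\beta}|$ lies in $[0,2\delta^{1-\beta}]$; both are close to $(v/n)^{1-\beta}$, which can be near $1$. What you need (and what does hold) is that their \emph{difference} is $O(\delta^{1-\beta})+o_P(1)$, since $\rho_n d(u)\le\delta^{1-\beta}+o_P(1)$ (by the height bound for $\cT_{\lfloor\delta n\rfloor}$), $(u/n)^{1-\beta}\le\delta^{1-\beta}$, and the MRCA of $u$ and $v$ has depth at most $d(u)$.
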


\begin{thm}\label{thm:meso_star}
In the mesoscopic regime when $\beta \in (1/2,1)$ we have, for any $k\ge 1$, for $\mathcal{T}^{(k)}_n$ the subtree of $\mathcal{T}_n$ spanned by the vertices $n, n-1, \dots, n-k+1$, 
\[\frac{1-\beta}{2} n^{-(1-\beta)}\mathcal{T}^{(k)}_n \convp S_k  \]
in the Gromov--Hausdorff topology, for $S_k$ a star graph with $k$ legs of length $1$. 
\end{thm}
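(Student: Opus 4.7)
The plan is to reduce the Gromov--Hausdorff convergence to two asymptotic facts: (i) each ancestral line from $v_i := n-i+1$ to the root has length $T_{\max}(1+o_{\mathbb{P}}(1))$ with $T_{\max} := 2n^{1-\beta}/(1-\beta)$; and (ii) for every fixed $i \neq j \in [k]$, the least common ancestor $W_{ij}$ of $v_i, v_j$ satisfies $W_{ij} \le n^{\alpha}$ with high probability for any $\alpha \in (0,1)$. A union bound over the $\binom{k}{2}$ pairs extends (ii) to the common ancestor $W$ of $\{v_1, \dots, v_k\}$, which therefore sits within graph-distance $O(n^{\alpha(1-\beta)}) = o(T_{\max})$ of the root. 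Combining (i) and (ii), each of the $k$ arms from $W$ to $v_i$ has length $T_{\max}(1+o_{\mathbb{P}}(1))$ and all pairwise LCAs lie within $o(T_{\max})$ of $W$, so $\mathcal{T}_n^{(k)}$ is $o_{\mathbb{P}}(T_{\max})$-close in the tree metric to the $k$-legged star with legs of length $T_{\max}$; dividing by $T_{\max} = \frac{2}{1-\beta}n^{1-\beta}$ gives the claimed limit. Part (i) follows from a concentration argument around the deterministic curve $x(t)^{1-\beta}=v_i^{1-\beta}-(1-\beta)t/2$ governing the mean backward trajectory (whose hitting time of $\{0\}$ is $T_{\max}(1+o(1))$), essentially reusing the variance estimates behind Theorem~\ref{thm:ht_meso}.

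The core estimate is (ii). I would first prove the uniform bound
\[
\mathbb{P}(u \in \mathcal{L}_v) \le \frac{e}{u^\beta}, \qquad 1 \le u < v \le n,
\]
by induction on $v$: writing $a_k := \mathbb{P}(u \in \mathcal{L}_{u+k})$ and $S_k := 1+\sum_{j=1}^k a_j$, the one-step recursion $a_k = S_{k-1}/((u+k-1)^\beta+1)$ gives $S_k \le S_{k-1}(1+1/u^\beta)$, hence $S_{u^\beta} \le e$ and $a_k \le e/u^\beta$ for $k \le u^\beta$; the bound propagates to $k > u^\beta$ via the averaging form of the recursion outside the window. Next, I would use the exploration algorithm flagged in the paper's introduction to reveal the two ancestral chains of $v_i, v_j$ by advancing the more-recent of the two pointers at each step, so that the parent-choices driving the two chains come from disjoint random variables until the first coalescence. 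This yields the factorization
\[
\mathbb{P}(u \in \mathcal{L}_{v_i} \cap \mathcal{L}_{v_j}) \le C u^{-2\beta},
\]
early mergers above $u$ contributing a correction of order $u^{-\beta}\mathbb{P}(W_{ij}>u)$ absorbed by a self-bounding argument. A first-moment/Markov bound then gives
\[
\mathbb{P}(W_{ij} \ge L) \le \mathbb{E}\bigl|\mathcal{L}_{v_i} \cap \mathcal{L}_{v_j} \cap [L,n]\bigr| \le C\sum_{u=L}^n u^{-2\beta} \lesssim L^{1-2\beta},
\]
which vanishes as $L \to \infty$ \emph{precisely because $\beta > 1/2$}. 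Setting $L = n^\alpha$ and union-bounding over the $\binom{k}{2}$ pairs finishes (ii).

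The main obstacle is cleanly justifying the factorization $\mathbb{P}(u \in \mathcal{L}_{v_i} \cap \mathcal{L}_{v_j}) \le C u^{-2\beta}$ in the presence of possible mergers above $u$: one must rule out the scenario in which the two chains coalesce at some $w > u$ and then jointly traverse to $u$, inflating the joint probability beyond the product of marginals. The exploration scheme is purpose-built for this, since before the first coalescence the two chains use disjoint underlying parent-choice variables (so genuine unconditional independence holds), and the post-merger contribution is controlled by the single-chain estimate $\mathbb{P}(u \in \mathcal{L}_w)\le e/u^\beta$ applied at the merger vertex. The summability boundary $\sum_u u^{-2\beta} < \infty \iff \beta > 1/2$ is exactly the mechanism behind the phase transition asserted in the statement.
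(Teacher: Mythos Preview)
Your high-level reduction to (i) and (ii) is exactly right and matches the paper's strategy. Part (i) is the paper's Proposition~\ref{prop:dist_limit}. The difference, and the gap, is in your route to (ii).

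The paper does not go through a first-moment bound on $|\mathcal{L}_{v_i}\cap\mathcal{L}_{v_j}\cap[L,n]|$. Instead it runs the simultaneous exploration of all $k$ ancestral lines, and uses the one-line observation that while every current position is at least $\delta n$, the conditional probability of coalescence at the next step is at most $(k-1)(\delta n)^{-\beta}$. Since only $K=O(n^{1-\beta})$ steps are needed before all positions drop below $\delta n$ (Lemma~\ref{lem:explore_ancestors}), a union/Markov bound gives $\pr(\text{coalesce above }\delta n)\le K(k-1)(\delta n)^{-\beta}=O(n^{1-2\beta})\to 0$ precisely when $\beta>1/2$. This is the same $n^{1-2\beta}$ threshold you identify, but applied directly to the coalescence event.

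Your first-moment route, as written, does not close. Decomposing on the merger point $W_{ij}$ and using independence of $\{W_{ij}=w\}$ and $\{u\in\mathcal{L}_w\}$ gives exactly
\[
\pr(u\in\mathcal{L}_{v_i}\cap\mathcal{L}_{v_j})=\pr(W_{ij}=u)+\sum_{w>u}\pr(W_{ij}=w)\,\pr(u\in\mathcal{L}_w),
\]
so with $p(L):=\pr(W_{ij}\ge L)$ your Markov bound becomes
\[
p(L)\le \E\bigl|\mathcal{L}_{v_i}\cap\mathcal{L}_{v_j}\cap[L,n]\bigr| = p(L)+\sum_{u\ge L}\Bigl(\sum_{w>u}\pr(W_{ij}=w)\,\pr(u\in\mathcal{L}_w)\Bigr),
\]
which is vacuous. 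Equivalently, if you feed the inductive hypothesis $p(u)\le C u^{1-2\beta}$ into your correction term $eu^{-\beta}p(u)$ and sum, you get $\sum_{u\ge L}u^{1-3\beta}\asymp L^{2-3\beta}$, which dominates $L^{1-2\beta}$ for all $\beta<1$; the bootstrap degrades rather than closes. The underlying reason is that once the chains merge at $W_{ij}$, the intersection $\mathcal{L}_{v_i}\cap\mathcal{L}_{v_j}\cap[L,n]$ contains all of $\mathcal{L}_{W_{ij}}\cap[L,W_{ij}]$, so its expected size genuinely exceeds $p(L)$ and a naked first-moment bound cannot recover $p(L)$.

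Your argument is easily repaired by abandoning the first moment on the intersection and instead bounding $\pr(W_{ij}=u)$ directly: at the coalescence step one chain sits at $u$ (cost $\le e u^{-\beta}$ by your single-chain estimate) and the other chain, necessarily within the window $[u,u+O(u^\beta)]$, lands on $u$ in one of $O(1)$ steps each of probability $O(u^{-\beta})$; summing $\sum_{u\ge L}u^{-2\beta}\lesssim L^{1-2\beta}$ then gives what you want. But this is essentially the paper's per-step argument rewritten, and the paper's version is shorter because it stops at level $\delta n$ rather than pushing to $n^\alpha$ (which is more than the Gromov--Hausdorff statement needs).
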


\begin{thm}\label{thm:meso_not_crt}
    In the mesoscopic regime when $\beta=1/2$, for any $k\ge 2$ and for $\mathcal{T}^{(k)}_n$ the subtree of $\mathcal{T}_n$ spanned by vertices $n, n-1, \dots, n-k+1$, 
 \[\frac{1}{4} n^{-1/2}\mathcal{T}^{(k)}_n \to T_k  \]
    in the Gromov--Hausdorff topology, for $T_k$ a rooted $\mathbb{R}$-tree with $k$ leaves, all at height $1$, and its branchpoints at heights $X_1<\dots<X_{k-1}$, where, for $U_1,\dots,U_{k-1}$ independent $\operatorname{uniform}[0,1]$ random variables, we may define $X_{k-1}= U_{k-1}^{1/(4k(k-1))}$, and, recursively, for $2\le \ell \le k-1$, $X_{\ell-1}= X_\ell U_{\ell-1}^{1/(4\ell(\ell-1))}$. 
\end{thm}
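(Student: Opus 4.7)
The plan is to track the $k$ ancestral lineages emanating from the youngest vertices $n, n-1, \ldots, n-k+1$ simultaneously, using the exploration algorithm alluded to in the abstract. Writing $w_i(s)$ for the $s$-th ancestor of $n-i+1$, I would first show via the differential-equation machinery underlying Theorem~\ref{thm:ht_meso} that the rescaled position $V_i(\tau) := w_i(\lfloor \tau\sqrt{n}\rfloor)/n$ satisfies $V_i(\tau)^{1/2} \to 1 - \tau/4$ in probability, uniformly on compacts of $[0, 4-\eps]$ for any $\eps > 0$. Under the rescaling $\frac{1}{4\sqrt{n}}\cT_n$, a vertex at position $v$ lies at height $x := \sqrt{v/n}$, so that the leaves are at height $x=1$ and the root at $x=0$.

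The central step is the computation of the pairwise coalescence rate. Conditional on lineage $i$'s ancestral path, the density of lineage-$i$ vertices per unit position near $v$ is approximately $2/\sqrt{v}$, since lineage $i$'s mean generational step size there is $\sqrt{v}/2$. As lineage $j$ at position $w_j \approx v$ chooses its parent uniformly over an interval of size $\sqrt{w_j}$, the probability of a coalescence in its next step is approximately $2/\sqrt{v}$; higher-order multi-way coalescence events occur at rate $O(1/v)$ and contribute negligibly. Rescaling time by $\sqrt{n}$, and using $\tau = 4(1-x)$ so that $d\tau = 4\,|dx|$, the pairwise coalescence rate in the height coordinate is $8/x$ per unit $|dx|$. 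By symmetry, the coalescing pair is uniformly distributed over the $\binom{\ell}{2}$ available pairs. Hence, with $\ell$ surviving lineages at height $X_\ell$, the next coalescence occurs at height $X_{\ell-1}$ with
\[
\pr\bigl(X_{\ell-1} \le y \mid X_\ell\bigr) = \exp\Bigl(-\int_y^{X_\ell} \tbinom{\ell}{2}\cdot \tfrac{8}{x}\,dx\Bigr) = (y/X_\ell)^{4\ell(\ell-1)},
\]
which matches $X_{\ell-1} = X_\ell U_{\ell-1}^{1/(4\ell(\ell-1))}$ with $U_{\ell-1}\sim\operatorname{Uniform}[0,1]$ independent of the past. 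Iterating from $\ell = k$ (with the convention $X_k := 1$) down to $\ell = 2$ recovers the joint distribution of the branchpoint heights claimed in the theorem.

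To upgrade to Gromov--Hausdorff convergence, I would combine: (i) convergence of the branchpoint heights in probability from the coalescent analysis above; (ii) convergence of the edge lengths between consecutive branchpoints, obtained by applying the single-lineage scaling limit after a ``restart'' at each coalescence to see that each surviving ancestral segment between $X_\ell$ and $X_{\ell-1}$ has rescaled length $X_\ell - X_{\ell-1}$; and (iii) tightness in the GH topology, which follows from the uniform height bound supplied by Theorem~\ref{thm:ht_meso}.

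The main obstacle I anticipate is justifying the coalescence-rate calculation rigorously in the presence of the intricate conditioning induced by the tree structure. Specifically, one must control fluctuations of the density estimate $2/\sqrt{v}$ at scales $\sqrt{v}$, and show that, conditional on $\ell$ distinct lineages having evolved down to height $x$ without prior coalescence, the subsequent coalescence rate is $\binom{\ell}{2}\cdot 8/x + o(1)$. I would approach this via a multi-scale coupling: on height windows of size $o(1)$ near any $x_0 \in (0,1)$, couple the $\ell$-tuple of ancestral walks to $\ell$ independent walks with the correct marginals, and control the propagation of coupling errors over the logarithmically many scales from $x = 1$ down to any fixed $x_0 > 0$. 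Edge effects near $x \to 0$ should be harmless, since almost surely in the limit all $k-1$ branchpoint heights lie strictly in $(0, 1)$.
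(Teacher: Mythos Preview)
Your heuristic and the paper's reach the same limit law, but the paper avoids exactly the obstacle you flag. Rather than estimating a ``density of lineage-$i$ vertices'' in the selection window and then invoking a multi-scale coupling, the paper's exploration algorithm always reveals the parent of whichever lineage head currently has the \emph{largest} label. The point of this choice is deterministic: because the model forces $L_{n_i}(M^i_m)\ge j(L(m)-1)$ for every $i$, all $k-1$ other heads are guaranteed to lie inside the interval $[j(L(m)-1),L(m)-1]$ from which the next ancestor is drawn uniformly. Hence the conditional coalescence probability at step $m$ is \emph{exactly} $(k-1)\lfloor(L(m)-1)^{1/2}\rfloor^{-1}$, with no density approximation. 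The survival probability then factorises as a product over exploration steps; one substitutes the scaling $L(m)\approx n f_{1/2}(m/(k\sqrt{n}))$ (Lemma~\ref{lem:explore_ancestors}), recognises a Riemann sum, and obtains the limit $(1-t)^{4k(k-1)}$. Iterating the lemma after each coalescence gives the recursion for $X_{\ell-1}$.

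What this buys: your approach correctly locates the rate $\binom{\ell}{2}\cdot 8/x$, but the justification via ``$2/\sqrt{v}$ lineage-$i$ points per step of lineage $j$'' is delicate---one has to argue carefully that there is no double-counting when both lineages advance, and that conditioning on non-coalescence up to height $x$ does not bias the density (this is the content of the coupling you would need to build). The paper's algorithm sidesteps all of this: the targets are $k-1$ specific \emph{points}, not a random cloud, so the per-step probability is exact and the conditioning is automatically handled by the product structure. Your route is not wrong, but it trades a one-line combinatorial observation for a genuinely harder analytic argument.
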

In particular, observe that, although distances in $\mathcal{T}_n$ are of the order $n^{1/2}$ and the global structure is random on the leading order, a potential limit of $n^{-1/2}\mathcal{T}_n$ can almost surely be distinguished from the Brownian continuum random tree, as the latter does not contain a pair of leaves that realize the height of the tree and have their most recent common ancestor at a non-trivial height. A full scaling limit result is better postponed for a future work, see also the open problems in Section \ref{sec:open_probs}. But let us remark in passing that the limiting random fractals that should arise as such scaling limits, has the potential to give rise to novel universality classes in the world of random trees.}

\section{Discussion and extensions}
\label{sec:disc}

\subsection{Related work}
{The study of uniform random recursive trees (corresponding to $j(n)=1$ of our model) goes back to papers by Na and Rapoport \cite{na1970distribution}, Moon \cite{moon1974distance} and Gastwirth \cite{gastwirth1977probability}. We also refer the reader to the book of Drmota \cite{drmota2009random} and the survey by Smythe and Mahmoud \cite{smythe1995survey}. The study of depth properties of random recursive trees began with the papers of Devroye \cite{devroye1988applications} and Mahmoud \cite{mahmoud1991limiting}, who showed that the height of the vertex $n$ is about $\log n$, and Pittel \cite{pittel1994note} later showed that the height of the tree is about $e \log n$ by using a continuous time embedding.}

{As in our work, various generalizations of this model has been considered in the literature. The earliest and most notable being by Szymański \cite{szymanski1987nonuniform} who considered the case when a new incoming vertex sample a parent \emph{preferentially} with a probability proportional to its degree. This gives rise to \emph{plane-oriented recursive trees}, also known as \emph{preferential attachment trees}. Works of Mahmoud \cite{mahmoud1992distances} and Pittel \cite{pittel1994note} analyze the height of these trees, which again turns out to be $\log n$ in order. When an incoming new vertex is allowed to connect to more than one parent, the preferential choice gives rise to the well known \emph{preferential attachment model} of Barabasi and Albert \cite{barabasi1999emergence}. While when a new vertex samples more than one parent, but now uniformly at random, one obtains the so called \emph{random recursive directed acyclic graphs} or \emph{DAGs}, introduced by Devroye and Lu \cite{devroye1995strong}, and found application later in modeling circuits, see Tsukiji and Xhafa \cite{tsukiji1996depth}; Arya, Golin and Mehlhorn \cite{arya1999expected}.}

{In a different direction, Devroye, Fawzi and Fraiman \cite{devroye2012depth} introduce the model of \emph{scaled attachment random recursive trees}, or \emph{SARRTs} (see Section \ref{sec:SARRTs} for the precise definition), where the sampling of a random parent by the incoming new vertex is guided by a given probability distribution on $[0,1]$, and prove that the height of these trees are of order $\log n$. Various other generalizations have been considered in the literature, see e.g. \cite{banerjee2024co} for a model where the new incoming vertex uniformly samples a vertex in the current tree, and then goes up its ancestral line for a random amount and finally connects to where it ends up, \cite{addario2024random} for a model where the new incoming vertex connects to a uniformly sampled \emph{neighbor}, of a randomly sampled node in the existing network, and \cite{bhamidi2022community} for a model where the set of vertices of the tree is divided into two communities, and the new incoming vertex has to make probabilistic decisions on what type it assumes and what type it (uniformly) connects to. Also relevant are the two works \cite{BBDS04_macro} and \cite{BBDS04_meso} where the authors consider a model where incoming vertices actually prefer earlier arrived vertices to connect to, as a consequence of not having enough information on the current state of the network. We also refer the reader to the nice survey \cite{holmgren2017fringe} on fringe trees by Holmgren and Janson, and to Devroye's notes \cite{devroye1998branching} on branching processes.}

\subsection{Local weak limits of general SAARTs}\label{sec:SARRTs}
One goal of this paper is to develop new proof techniques for proving local weak convergence of random tree models to limiting infinite objects. While Theorem \ref{thm:lwc-macro} describes local weak convergence for the specific model occupying the majority of the paper, as mentioned in \textcolor{black}{Section \ref{sec:model},} in the macroscopic regime \textcolor{black}{our model} is a specific example of scaled attachment random recursive trees (SARRT) as formulated in \cite{devroye2012depth}. \textcolor{black}{The model is described as follows:} 
\begin{enumeratea}
    \item Fix a distribution $\mu$ on $[0,1]$. We will restrict to the setting where $\mu$ has a density $f_V$ with respect to the Lebesgue measure. 
    \item Let $V_1, V_2, \ldots, $ be \emph{i.i.d.} with distribution $\mu$. 
    \item Generate a sequence of random trees where recursively after constructing $\cT_n$, to construct $\cT_{n+1}$ generate a new vertex labelled $n+1$ and connect it to vertex labelled $\lfloor n V_n \rfloor$. 
\end{enumeratea}
Consider the function 
\[\lambda_V(x) = f_V(e^{-x}), \qquad x\geq 0. \]
Let $\cP_V$ denote a Poisson point process with intensity function $\lambda_V$. Let $\BP$ be a continuous time branching process started with one individual at time $t=0$ and offspring distribution $\cP_V$. Let $T_1\sim \exp(1)$ independent of $\BP_V$. Let $\fpm_{V}$ denote the distribution of $\BP_{V}(T_1)$, viewed as a random finite rooted tree on $\bbT$, where we retain only genealogical information between individuals in $\BP_{V}(T_1)$. 
\begin{thm}
\label{thm:lwc-saart}
     Consider the sequence of random trees $\set{\cT_n:n\geq 2}$ distributed as an SAART with attachment distribution $V$.  Then $\set{\cT_n:n\geq 2}$ converges in probability in the extended fringe sense (Def.~\ref{def:local-weak}~\eqref{it:fringe-b}) to the unique infinite {\tt sin}-tree with fringe distribution $\fpm_{V}$. This implies in particular, the degree distribution satisfies the asymptotics for $k\geq 1$,
     \[
\frac{N_k(n)}{n} \convp \pr(\cP_V(T_1) = k-1) \text{ as } n\to\infty,
\]
where $T_1\sim \exp(1)$ independent of $\cP_V(\cdot)$. 
\end{thm}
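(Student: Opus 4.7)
The plan is to exploit a continuous-time embedding where vertex $v$ sits at log-time $s_v := \log v$. The key observation is that in this scale, the offspring of a vertex $u$ asymptotically form a Poisson point process of intensity $\lambda_V$. Indeed, for $w > u$, the probability that $w$ attaches to $u$ equals $\prob(\lfloor (w-1)V_{w-1}\rfloor = u) \approx f_V(u/w)/w$. Parametrizing by $s = \log(w/u)$ so that $dw = w\,ds$, this becomes rate $f_V(e^{-s}) = \lambda_V(s)$ per unit log-time. Since the $V_i$ are i.i.d., the offspring of disjoint vertex sets are conditionally independent, yielding a genuine branching structure in log-time.

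First I would establish distributional fringe convergence at a typical vertex. Let $U_n$ be uniform on $[n]$, independent of the tree. Since $U_n/n \convd U \sim \mathrm{Uniform}[0,1]$, the available log-time $T_n := \log(n/U_n)$ converges to $T_1 := -\log U \sim \exp(1)$. Viewing the descendants of $U_n$ in the shifted log-time coordinate $s - \log U_n$, the subtree is obtained by iteratively running the Bernoulli offspring processes above up to time $T_n$. A Bernoulli-to-Poisson coupling (controlled via the continuity of $f_V$ and a truncation at bounded depth), together with iterated independence across generations, shows that this finite rooted tree converges in distribution to $\BP_V(T_1)$, whose law is $\fpm_V$ by definition.

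For the extended-fringe limit, I would iterate up the ancestral path. The parent $U_n^{(1)} = \lfloor (U_n - 1)V_{U_n - 1}\rfloor$ satisfies $U_n^{(1)}/U_n \convd V$, so consecutive log-time gaps along the ancestral path are asymptotically i.i.d.\ as $-\log V$. Conditional on one designated child sitting at the observed log-time gap, the remaining children at each ancestor form, by a Palm-style calculation, an independent $\lambda_V$-Poisson point process on the available window, each spawning an independent copy of $\BP_V$ run up to the appropriate residual time. Iterating backward identifies the limit as the unique infinite \texttt{sin}-tree with fringe marginal $\fpm_V$.

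Finally, to upgrade distributional convergence at a uniform vertex to convergence in probability in the extended fringe sense, I would employ a second-moment argument: for a fixed depth $r$ and finite rooted tree $\bt$, let $N_n(\bt, r)$ be the number of $v \in [n]$ whose depth-$r$ extended neighborhood in $\cT_n$ equals $\bt$. The first moment $\E[N_n(\bt,r)]/n$ converges to the corresponding \texttt{sin}-tree probability by the analysis above. The main obstacle will be the variance bound: for two independent uniform vertices $U_n, U_n'$, one must show that their depth-$r$ neighborhoods are asymptotically independent. Since each neighborhood has fringe size bounded in probability and the two vertices are at non-degenerate log-distance, the probability that they share any portion of an ancestral path or a fringe subtree is $O(1/n)$; on the complementary event, the two neighborhoods depend on disjoint collections of attachment variables and are exactly independent. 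This yields $\var(N_n(\bt,r)) = o(n^2)$, completing the proof via Chebyshev.
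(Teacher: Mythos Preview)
Your outline is sound and shares the paper's core idea: embed in log-time so that vertex $v$ is born at $\log v$, observe that the offspring of $u$ asymptotically form a Poisson process with intensity $\lambda_V(s)=f_V(e^{-s})$, and couple the fringe of a uniform vertex to $\BP_V$ run for time $T_1\sim\exp(1)$. The paper makes this rigorous by a different mechanism than you propose, and it is worth contrasting the two.

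For the fringe limit itself, the paper does not argue via a Bernoulli--to--Poisson coupling. Instead it writes down exactly the joint law of the first $K$ inter-arrival times (in log-scale) of the first $K$ fringe descendants of a fixed vertex $n$, performs a sequence of explicit analytic approximations (replacing products by exponentials, floors by their arguments, Riemann sums by integrals), and then passes to the limit via dominated convergence after a change of variables. This is heavier but has the virtue of being completely explicit and not requiring any regularity of $f_V$ beyond integrability; your coupling sketch invokes ``continuity of $f_V$'', which is not assumed.

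For the upgrade from expected fringe convergence to convergence in probability in the extended fringe sense, the paper takes the abstract route: it shows $\cT_n\Efr\fpm_V$, then invokes Aldous's theorem that extremality of the limit fringe measure automatically upgrades this to $\cT_n\probfr\fpm_V$, and finally uses the general fact that probability fringe convergence to a genuine fringe distribution implies extended fringe convergence to the associated {\tt sin}-tree. Your second-moment argument is a legitimate alternative that sidesteps the need to verify extremality of $\fpm_V$ (which the paper leaves implicit), at the cost of a direct variance computation. Your key observation---that on the high-probability event where the depth-$r$ neighborhoods of two independent uniform vertices are disjoint, the neighborhoods depend on disjoint collections of the i.i.d.\ attachment variables $(V_i)$ and are therefore exactly independent---is correct and is really what makes the second-moment route clean here. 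Similarly, your direct Palm-style treatment of the ancestral spine is a valid substitute for the abstract fringe-to-extended-fringe lemma, though more laborious.
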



\subsection{Open questions}\label{sec:open_probs}
The goal of this paper is to understand the impact of lack of information on the state of the network in the evolution of the network. There are a host of unexplored directions. \textcolor{black}{Some of these include:}
\begin{enumeratea}
    \item Considering settings where instead of uniformly attaching to a vertex amongst those observed, one uses the degree distribution of the vertices available to make probabilistic decisions \textcolor{black}{in a preferential fashion.}
    \textcolor{black}{\item As outlined in Remark \ref{rem:future_work_general_j_nontree}, it is interesting to consider more general choices for the sequence $j(n)$, and the case where the network growth is governed by vertices coming in with more than one edge, i.e., the non-tree setting.}
    \item \textcolor{black}{Deriving} more detailed second order asymptotics for the fluctuation of the height about its mean. {The phase transition implied by Theorems \ref{thm:ghp_meso} and \ref{thm:meso_star} suggests that, in the mesoscopic regime, the second-order behaviour is different for $\beta<1/2$ and $\beta>1/2$: for $\beta<1/2$ the maximum is realized by the length of a single long branch, whereas for $\beta>1/2$ it results from a competition of many long branches, restricting the fluctuations in the latter case.}
    \item {Studying the limit in the Gromov--Hausdorff topology of $n^{-1/2}\cT_n$ for $\beta=1/2$ in the mesoscopic regime.  We conjecture that this model converges to a compact random fractal that is approximated by the limit $T_k$ of the subtree of $\cT_n$ spanned by $n, n-1,\dots,n-k+1$ by making $k$ large enough. In fact, when considering $j(n)=n-\lfloor \alpha n^{1/2} \rfloor$ for $\alpha\in \mathbb{R}$, we believe to find a family of novel limiting fractals that somehow interpolate between the regimes $\beta<1/2$ and $\beta>1/2$. }
\end{enumeratea}

\section{Local weak convergence and {\tt sin}-trees}
\label{sec:local-wll}

The goal of this section is to describe the standard notions of \emph{local weak convergence}~\cites{aldous-steele-obj,benjamini-schramm,van2023random}. The setting of trees is easier to grasp, and we largely follow Aldous~\cite{aldous-fringe}.


\subsection{Fringe decomposition for trees}
 \textcolor{black}{One of the core objects of this paper is the study of a sequence of growing random trees $\set{\cT_n:n\geq 1}$. Throughout we will write $\deg(v,\cT_n)$ for the degree of the vertex $v$ in the tree $\cT_n$ and write for the empirical degree counts,  
\begin{equation}
    \label{eqn:deg-count}
    N_k(n) = \sum_{v\in \cT_n} \ind\set{\deg(v,\cT_n) = k }, \qquad k\geq 1.
\end{equation}
When there is no scope for confusion, we will simplify the above notation to $\deg(v,n)$.} Recall that the space of finite rooted trees was denoted by $\bbT$.  For any $r\geq 0$ and $\bt\in \bbT$, let $B(\bt, r) \in \bbT$ denote the subgraph of $\bt$ of vertices within graph distance $r$ from $\rho_{\bt}$, viewed as an element of $\bbT$ and rooted again at $\rho_{\bt}$. 

 Given two rooted finite trees $\bs, \bt \in \bbT$,  say that $\bs \simeq \bt $ if  there exists a {\bf root
preserving} isomorphism between the two trees viewed as unlabelled graphs. Given two rooted trees $\bt,\bs \in \bbT$ 
, define the distance (\cite{benjamini-schramm},~\cite{van2023random}*{Equation 2.3.15})
\begin{align}
\label{eqn:distance-trees}
	d_{\bbT}(\bt,\bs):= \frac{1}{1+R^*}, \qquad \text{with } \qquad R^* =\sup\{r: B(\bt, r) \simeq B(\bs, r)
	\}.
\end{align}

Next, fix a tree $\bt\in \bbT$ with root $\rho = \rho_\bt$ and a vertex $v\in \bt$ at (graph) distance $h$ from the root.  Let $(v_0 =v, v_1, v_2, \ldots, v_h = \rho)$ be the unique path from $v$ to $\rho$. The tree $\bt$  can be decomposed as $h+1$ rooted trees $f_0(v,\bt), \ldots, f_h(v,\bt)$, where $f_0(v,\bt)$ is the tree rooted at $v$, consisting of all vertices for which there exists a path from the root passing through $v$. For $i \ge 1$, $f_i(v,\bt)$ is the subtree rooted at $v_i$, consisting of all vertices for which the path from the root passes through $v_i$ but not through $v_{i-1}$. 

\textcolor{black}{Let us denote the infinite-fold product of $\bbT$ as $\bbT^{\infty}$, i.e., it is the set of all sequences of the form $(\bt_1,\bt_2,\dots)$.} Call the map $(v,\bt) \leadsto \bbT^\infty$ where $v\in \bt$, defined via, 
\[F(v, \bt) = \left(f_0(v,\bt), f_1(v,\bt) , \ldots, f_h(v,\bt), \emptyset, \emptyset, \ldots \right),\]
as the fringe decomposition of $\bt$ about the vertex $v$. Call $f_0(v,\bt)$ the {\bf fringe} of the tree $\bt$ at $v$.
For $k\geq 0$, call $F_k(v,\bt) = (f_0(v,\bt) , \ldots, f_k(v,\bt))$ the {\bf extended fringe} of the tree $\bt$ at $v$ truncated at distance $k$ from $v$ on the path to the root.

Now consider the space $\bbT^\infty$. The metric in~\eqref{eqn:distance-trees} extends  to $\bbT^\infty$, \eg\ using the distance,
\begin{align}
\label{eqn:dist-inf}
	d_{\bbT^\infty}((\bt_0, \bt_1, \ldots),(\bs_0, \bs_1, \ldots)):= \sum_{i=0}^\infty \frac{1}{2^i} d_{\bbT}(\bt_i, \bs_i). 
\end{align}
We can also define analogous extensions to $\bT^k$ for finite $k$.  

Next, an element $\bfomega = (\bt_0, \bt_1, \ldots) \in \bbT^\infty$, with $|\bt_i|\geq 1$ for all $ i\geq 0$,  can be thought of as a locally finite infinite rooted tree with a {\bf s}ingle path to {\bf in}finity (thus called a {\tt sin}-tree~\cite{aldous-fringe}), as follows: Identify the sequence of roots of $\set{\bt_i:i\geq 0}$ with the integer lattice $\Zbold_+ = \set{0,1,2,\ldots}$, equipped with the natural nearest neighbor edge set, rooted at $\rho=0$. Analogous to the definition of extended fringes for finite trees, for any $k\geq 0$, write 
$F_k(0,\bfomega)= (\bt_0, \bt_1, \ldots, \bt_k)$. 



Call this the extended fringe of the tree $\bfomega$ at vertex $0$, till distance $k$, on the infinite path from $0$. Call $\bt_0 = F_0(0,\bfomega)$ the {\bf fringe} of the {\tt sin}-tree $\bfomega$. Now suppose $\prob$ is a probability measure on $\bbT^\infty$ such that, for $\TT:= (\bt_0(\TT), \bt_1(\TT),\ldots)\sim \prob$,  $|\bt_i(\TT)|\geq 1$ a.s.  $\forall~i\geq 0$. Then $\TT$ can be thought of as an infinite {\bf random} {\tt sin}-tree. 

Define a matrix $\vQ = (\vQ(\vs,\vt): \vs, \vt \in \bbT)$ as follows: suppose the root $\rho_{\vs}$ in $\vs$ has degree $\deg(\rho_{\vs}) \ge 1$, and let $(v_1,\ldots, v_{\deg(\rho_{\vs})})$ denote its children. For $1\leq i\leq {\deg(\rho_{\vs})}$, let $f(\vs, v_i)$ be the subtree below $v_i$ and rooted at $v_i$, viewed as as an element of $\bbT$. Write,
\begin{align}
\label{eqn:Q-def}
	\vQ(\vs,\vt):= \sum_{i=1}^{\deg(\rho_{\vs})} \ind\set{d_{\bbT}(f(\vs, v_i), \vt) = 0}. 
\end{align} 
Thus, $\vQ(\vs, \vt)$ counts the number of descendant subtrees of the root of $\vs$ that are  \textcolor{black}{(root preserving)} isomorphic 
to $\vt$. If $\deg(\rho_{\vs})=0$, define $\vQ(\vs, \vt)=0$. Now consider a sequence $(\bar \vt_0, \bar \vt_1, \dots)$ of trees in $\bbT$ such that $\vQ(\bar \vt_i, \bar \vt_{i-1}) \ge 1$ for all $i \ge 1$. Then there exists a unique infinite {\tt sin}-tree $\TT$ with infinite path indexed by $\Zbold_+$ such that $\bar \vt_i$ is the subtree rooted at $i$ for all $i \in \Zbold_+$. Conversely, it is easy to see, by taking $\bar \vt_i$ to be the union of (vertices and induced edges) of $\vt_0,\dots, \vt_i$ for each $i \in \Zbold_+$, that every infinite {\tt sin}-tree has such a representation. Following~\cite{aldous-fringe}, we call this the \emph{monotone representation} of the {\tt sin}-tree $\TT$.

\subsubsection{Convergence on the space of trees}
\label{sec:fringe-convg-def}
For $1\leq k\leq \infty$, let $\cM_{\pr}(\bbT^k)$ denote the space of probability measures on the associated space, metrized using the topology of weak convergence inherited from the corresponding metric on the space $\bbT^k$, see, e.g.,~\cite{billingsley2013convergence}. 
Suppose $\set{\cT_n}_{n\geq 1} \subseteq \bbT$ be a sequence of {\bf finite} rooted random trees on some common probability space (for notational convenience, assume $|\cT_n|= n$, or more generally $|\cT_n|\convas \infty$). For $n\geq 1$ and for each fixed $k\geq 0$,  the empirical distribution of fringes up to distance $k$ 
\begin{align}
\label{eqn:empirical-fringe-def}
	\fP_{n}^k:= \frac{1}{n} \sum_{v\in \cT_n} \delta\set{F_k(v,\cT_n)}. 
\end{align}
Thus, $\set{\fP_{n}^k:n\geq 1}$ can be viewed as a random sequence in  $\cM_{\pr}(\bbT^k)$. In the following, $\E[\fP_n^{0}]$ denotes the measure given by $\E[\fP_n^{0}](\vt):= \E[\fP_n^{0}(\vt)], \, \vt \in \bbT$.

\begin{defn}[Local weak convergence]
	\label{def:local-weak}
 Fix a probability measure $\varpi$ on $\bT$.
	\begin{enumeratea}
 \item \label{it:fringe-exp} Say that a sequence of trees $\set{\cT_n}_{n\geq 1}$ converges in {\bf expectation}, in the fringe sense, to $\varpi$, if \[\E[\fP_n^{0}] \to \varpi, \quad \text{ as } n\to\infty. \]
 Denote this convergence by $\TT_n\Efr \varpi$ as $n\to\infty$.
	    \item \label{it:fringe-a}  Say that a sequence of trees $\set{\cT_n}_{n\geq 1}$ converges in the probability sense, in the fringe sense, to $\varpi$, if \[\fP_n^{0} \probc \varpi, \quad \text{ as } n\to\infty. \]
	Denote this convergence by $\TT_n\probfr \varpi$ as $n\to\infty$.
	    \item \label{it:fringe-b} Say that a sequence of trees $\set{\cT_n}_{n\geq 1}$ converges in probability, in the {\bf extended fringe sense}, to a limiting infinite random {\tt sin}-tree $\TT_{\infty}$ if for all $k\geq 0$ one has
	  \[\fP_n^k \probc \prob\left(F_k(0,\TT_{\infty}) \in \cdot \right), \qquad \text{ as } n\to\infty. \]
	Denote this convergence by $\TT_n\probcrf \TT_{\infty}$ as $n\to\infty$.
	\end{enumeratea}
\end{defn} 
In an identical fashion, one can define notions of convergence in distribution or almost surely in the fringe, respectively extended fringe sense. 
Letting $\varpi_{\infty}(\cdot) = \pr(F_0(0, \cT_{\infty}) = \cdot)$ denote the distribution of the fringe of $\cT_\infty$ on $\bT$, convergence in (c) above clearly implies convergence in notion (b) with $\varpi =\varpi_{\infty}(\cdot) $.
If the limiting distribution $\varpi$ in (b) has a certain `stationarity' property (defined below), \emph{convergence in the fringe sense implies convergence in the extended fringe sense} as we now describe. Under an ``extremality'' condition of the limit object in (a), convergence in expectation implies convergence in probability as in (b). We need the following definitions.

\begin{defn}[Fringe distribution~\cite{aldous-fringe}] \label{fringedef}
	Say that a probability measure $\varpi$ on $\bbT$ is a fringe distribution if 
	\[\sum_{\vs} \varpi(\vs) \vQ(\vs, \vt) = \varpi(\vt), \qquad \forall~\vt \in \bbT. \]
\end{defn}
It is easy to check that the space of fringe distributions $\cM_{\pr, \fringe}(\bbT) \subseteq \cM_{\pr}(\bT)$ is a convex subspace of the space of probability measure on $\bT$ and thus one can talk about extreme points of this convex subspace. The following fundamental theorem is one of the highlights of~\cite{aldous-fringe}. 
\begin{thm}[\cite{aldous-fringe}]
\label{thm:aldous-efr-pfr}
    Fix a fringe distribution $\varpi \in \cM_{\pr, \fringe}(\bbT)$. Suppose a sequence of trees $\set{\cT_n:n\geq 1}$ converges in the expected fringe sense $\cT_n \Efr \varpi$ as $n\to\infty$. If $\varpi$ is extremal in the space of fringe measures, then the above convergence in expectation automatically implies $\cT_n \probfr \varpi$. 
\end{thm}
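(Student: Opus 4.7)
The plan is to show $\cT_n \probfr \varpi$, \emph{i.e.,} $\fP_n^0 \probc \varpi$, by a subsequence argument. Tightness of $\set{\fP_n^0}$ as random probability measures on $(\bbT, d_{\bbT})$ follows from the hypothesis $\E[\fP_n^0] \to \varpi$, since for a Polish state space tightness of a sequence of random probability measures is equivalent to tightness of their means. So along any subsequence I may extract $\fP_n^0 \Rightarrow \mu$ in distribution for some random $\mu \in \cM_{\pr}(\bbT)$ with $\E[\mu] = \varpi$, reducing the task to proving $\mu = \varpi$ almost surely.

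The workhorse will be a mass-transport identity derived from Definition~\ref{fringedef}. Fix $\vt \in \bbT$. For each $v \in \cT_n$ the child subtrees of the root of the fringe $f_0(v, \cT_n)$ are exactly $\set{f_0(w, \cT_n): w \text{ child of } v \text{ in } \cT_n}$, so $\vQ(f_0(v, \cT_n), \vt)$ counts those children $w$ of $v$ with $f_0(w, \cT_n) \simeq \vt$. Summing over $v \in \cT_n$ enumerates every non-root vertex exactly once via its unique parent, giving the pointwise identity
\[
\int_{\bbT} \vQ(\vs, \vt)\, d\fP_n^0(\vs) \;=\; \fP_n^0(\vt) \;-\; \tfrac{1}{n}\, \ind\set{f_0(\rho_{\cT_n}, \cT_n) \simeq \vt}.
\]
Thus $\fP_n^0$ satisfies the defining fringe equation up to an $O(1/n)$ boundary error from the root.

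The hard step will be passing this identity to the subsequential limit, because for fixed $\vt$ the map $\vs \mapsto \vQ(\vs, \vt)$, though locally constant on $(\bbT, d_{\bbT})$ (it depends only on $B(\vs, |\vt|)$) and hence continuous, is not bounded: $\vQ(\vs, \vt) \le \deg(\rho_\vs)$ is not uniformly controlled. I would handle this by truncation: for each $K \ge 1$, the functional $\nu \mapsto \nu(\vt) - \int (\vQ(\cdot, \vt) \wedge K)\, d\nu$ is bounded and continuous on $\cM_{\pr}(\bbT)$, and evaluated at $\fP_n^0$ it is $\ge -O(1/n)$, so the continuous mapping theorem gives that it is $\ge 0$ almost surely at the limit $\mu$. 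Monotone convergence as $K \to \infty$ then yields $\int \vQ(\cdot, \vt)\, d\mu \le \mu(\vt)$ a.s. Taking expectations, Fubini together with $\E[\mu] = \varpi$ and the fact that $\varpi$ itself satisfies the fringe equation show that both sides have common mean $\varpi(\vt)$, so the non-negative difference $\mu(\vt) - \int \vQ(\cdot, \vt)\, d\mu$ has mean zero and vanishes a.s. Running this over the countable set $\vt \in \bbT$ shows $\mu \in \cM_{\pr, \fringe}(\bbT)$ almost surely.

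The conclusion is a Choquet-type extremality argument: the law of $\mu$ on $\cM_{\pr, \fringe}(\bbT)$ is a representing measure for $\varpi$, realizing it as a mixture of fringe distributions. Extremality of $\varpi$ in the convex set $\cM_{\pr, \fringe}(\bbT)$ forces this representation to be trivial, so $\mu = \varpi$ almost surely. Concretely, if $\pr(\mu(\vt) > \varpi(\vt)) = p \in (0, 1)$ for some $\vt$, then the conditional expectations $\mu_1 = \E[\mu \mid \mu(\vt) > \varpi(\vt)]$ and $\mu_2 = \E[\mu \mid \mu(\vt) \le \varpi(\vt)]$ are both fringe measures (by linearity of the fringe equation), they satisfy $\mu_1(\vt) > \varpi(\vt) \ge \mu_2(\vt)$ and so differ, yet $\varpi = p \mu_1 + (1-p)\mu_2$ is their non-trivial convex combination, contradicting extremality. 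Uniqueness of the subsequential limit then upgrades tightness to $\fP_n^0 \probc \varpi$. The principal difficulty is the unboundedness of $\vQ(\cdot, \vt)$ at the limit step, which the truncation-plus-expectation trick circumvents by using that $\E[\mu] = \varpi$ is itself a fringe measure.
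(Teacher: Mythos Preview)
The paper does not prove this theorem; it is quoted from Aldous~\cite{aldous-fringe} as a black box, so there is no in-paper argument to compare against. Your proposal is correct and in fact reconstructs essentially the proof Aldous gives: pass to a subsequential limit of the empirical fringe measures, use the deterministic identity $\sum_{v}\vQ(f_0(v,\cT_n),\vt)=\sum_{w\neq \rho}\ind\{f_0(w,\cT_n)\simeq \vt\}$ to show any such limit is almost surely a fringe distribution with mean $\varpi$, and then invoke extremality to force the limit to be deterministic and equal to $\varpi$.

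Two small remarks on the write-up. First, $(\bbT,d_{\bbT})$ is separable but not complete (Cauchy sequences can converge to infinite trees), so the blanket ``Polish state space'' justification for tightness is not quite on the nose; the cleanest fix is to note that $\bbT$ is countable and discrete, view $\fP_n^0$ as a random element of the compact cube $[0,1]^{\bbT}$, and then use $\E[\mu(\vt)]=\varpi(\vt)$ together with Fatou to check that any coordinate-wise subsequential limit $\mu$ satisfies $\sum_{\vt}\mu(\vt)=1$ almost surely. Second, the root correction term $\tfrac1n \ind\{f_0(\rho_{\cT_n},\cT_n)\simeq \vt\}$ actually vanishes once $n>|\vt|$, since $f_0(\rho_{\cT_n},\cT_n)=\cT_n$; so for each fixed $\vt$ the fringe identity for $\fP_n^0$ is exact for large $n$, which slightly streamlines the limit step.
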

The advantage of this result is that for proving convergence in the probability fringe sense, at least under the extremality of the limit object, dealing with expectations is enough. 
The next result shows that convergence in the probability fringe sense often automatically implies convergence to a limit infinite {\tt sin}-tree. We need one additional definition. For any fringe distribution $\varpi$ on $\bbT$, one can uniquely obtain the law $\varpi^{EF}$ of a random {\tt sin}-tree $\TT$ with monotone decomposition $(\bar\bt_0(\TT), \bar\bt_1(\TT),\ldots)$ such that for any $i \in \Zbold_+$, any $\bar \vt_0, \bar \vt_1, \dots$ in $\bbT$,
\begin{align}\label{ftoef}
\varpi^{EF}((\bar\bt_0(\TT), \bar\bt_1(\TT),\ldots, \bar\bt_i(\TT)) = (\bar \vt_0,\vt_1,\dots,\vt_i)) := \varpi(\vt_i) \prod_{j=1}^{i}Q(\vt_i,\vt_{i-1}),
\end{align}
where the product is taken to be one if $i=0$. The following Lemma follows by adapting the proof of~\cite{aldous-fringe}*{Propositions 10 and 11}, and the proof is omitted.

\begin{lem}\label{ftoeflemma}
Suppose a sequence of trees $\set{\cT_n}_{n\geq 1}$ converges in probability, in the fringe sense, to $\varpi$. Moreover, suppose that $\varpi$ is a fringe distribution in the sense of Definition~\ref{fringedef}. Then $\set{\cT_n}_{n\geq 1}$ converges in probability, in the extended fringe sense, to a limiting infinite random sin-tree $\TT_{\infty}$ whose law $\varpi^{EF}$ is uniquely obtained from $\varpi$ via~\eqref{ftoef}.
\end{lem}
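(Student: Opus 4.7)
My plan is to follow Aldous's strategy from the cited Propositions 10 and 11 in three stages. The first is to check that the formula~\eqref{ftoef} is internally consistent under truncation and hence defines a probability law $\varpi^{EF}$ on the space of infinite {\tt sin}-trees. The key identity to verify is
\[
\sum_{\bar\vt_i} \varpi^{EF}(\bar\vt_0,\dots,\bar\vt_i) = \varpi^{EF}(\bar\vt_0,\dots,\bar\vt_{i-1}),
\]
which drops out immediately from the fringe property $\sum_{\bar\vt_i}\varpi(\bar\vt_i)Q(\bar\vt_i,\bar\vt_{i-1}) = \varpi(\bar\vt_{i-1})$ of Definition~\ref{fringedef}. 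Iterating down to $i=0$ shows that the total mass is $\sum_{\bar\vt_0}\varpi(\bar\vt_0)=1$, and Kolmogorov's extension theorem then produces $\varpi^{EF}$ as a bona fide {\tt sin}-tree law.

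The second and central stage is to record the direct combinatorial identity
\[
\#\Big\{v \in \cT_n : (\bar\vt_0^{(v)},\dots,\bar\vt_k^{(v)}) = (\bar\vt_0,\dots,\bar\vt_k)\Big\} = \Bigg(\prod_{j=1}^k Q(\bar\vt_j,\bar\vt_{j-1})\Bigg)\cdot \#\{w \in \cT_n : f_0(w,\cT_n)=\bar\vt_k\},
\]
valid for every admissible sequence (i.e., with $Q(\bar\vt_j,\bar\vt_{j-1})\geq 1$ for all $j$). Here $\bar\vt_j^{(v)}$ denotes the subtree of $\cT_n$ rooted at the $j$-th ancestor of $v$. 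The identity is immediate from the definition of $Q$: starting from any vertex $w$ whose subtree equals $\bar\vt_k$, each descendant $v$ of depth $k$ below $w$ with the prescribed monotone profile is obtained by choosing, at each of the $k$ successive generations going down, one of the $Q(\bar\vt_j,\bar\vt_{j-1})$ children whose subtree matches the next tree on the way, and distinct choices give distinct descendants. This identity makes induction on $k$ unnecessary: dividing by $n$ and invoking the hypothesis $\cT_n\probfr\varpi$ at level $0$ yields pointwise convergence in probability of the empirical probability mass at $(\bar\vt_0,\dots,\bar\vt_k)$ to $\varpi^{EF}(\bar\vt_0,\dots,\bar\vt_k)$.

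The third stage upgrades this pointwise convergence on the countable state space $\bbT^{k+1}$ to weak convergence of the empirical measures $\fP_n^k$, after the deterministic bijective recoding between the fringe coordinates $(f_0(v),\dots,f_k(v))$ and the monotone coordinates $(\bar\vt_0^{(v)},\dots,\bar\vt_k^{(v)})$. Since the limit has total mass $1$ by the first stage, Scheffé's lemma delivers convergence in total variation, equivalently weak convergence on the discrete space; doing this for every $k\geq 0$ is precisely the definition of convergence in the extended fringe sense to the {\tt sin}-tree with law $\varpi^{EF}$. The main obstacle is hidden in the second stage: the counting identity accounts only for vertices of depth at least $k$ in $\cT_n$, so one must argue that vertices inside the root-neighbourhood $B(\rho_{\cT_n},k)$ contribute only $o(n)$ to $\fP_n^k$. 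This follows from the probability-fringe hypothesis together with the fact that $\varpi$ is supported on finite trees: the empirical mass on vertices whose fringe is atypically large must vanish as $n\to\infty$, and this in turn forces $|B(\rho_{\cT_n},k)|$ to be $o(n)$ in probability.
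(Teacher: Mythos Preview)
Your proposal is correct and follows exactly the route the paper indicates (the paper omits the proof and simply cites Aldous's Propositions~10 and~11, which is precisely the three-stage argument you sketch). One minor remark: the separate justification in your last paragraph that $|B(\rho_{\cT_n},k)|=o(n)$ via ``vertices with atypically large fringe are rare'' is not a clean direct implication as stated, but it is also unnecessary --- the Scheff\'e step you already invoked (pointwise convergence of $\fP_n^k$ on each admissible profile together with the limit having total mass~$1$ from Stage~1) automatically forces the residual mass on depth-$<k$ vertices to vanish, so nothing further is needed.
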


Fringe convergence and extended fringe convergence imply convergence of \textcolor{black}{local} functionals, such as the degree distribution. For example, letting $\cT_{\varpi} \sim \varpi$ with root denoted by $0$ say, convergence in notion~\eqref{it:fringe-a} in particular implies that for any $k\geq 0$ \textcolor{black}{(assuming $\cT_n$ is a tree on $n$ vertices)}, 
\begin{align}
\label{eqn:deg-convg-fr}
\frac1n\cdot\#\set{v\in \TT_n: \deg(v) = k+1} \convp \prob(\deg(0,\cT_{\varpi})=k).
\end{align}
However, both convergences give much more information about the asymptotic properties of $\set{\cT_n:n\geq 1}$ beyond its degree distribution. \textcolor{black}{The interested reader is encouraged to have a look at \cite{aldous-fringe}.}

\section{Proofs of local results}
\label{sec:proofs}
\subsection{Local weak convergence}

We approximate the discrete tree using a continuous time branching process (CTBP). In this process, each vertex gives birth to children, and the time instants marking the child births follow a point process on the real line that is a Poisson process $\mathcal{N}_{\theta}$, and the processes for different vertices are independent. The process $\mathcal{N}_{\theta}$ has the distribution of a constant rate $1/(1-\theta)$ Poisson process on the interval $(0,\log(1/\theta))$. We start by collecting some simple properties of this branching process $\BP_{\theta}$. 

\begin{lem}[Malthusian rate of growth]
\label{lem:malthus}
    Consider the branching process $\BP_\theta$. Then 
    \begin{enumeratea}
        \item The Malthusian rate of growth of the branching process $\BP_\theta$ namely $\lambda(\theta) =1$. 
        \item The offspring distribution satisfies the $X\log{X}$ condition. In particular $e^{-t}\BP_\theta(t)\stackrel{a.s., \bL^1}{\longrightarrow} W_\theta$ where $W_\theta >0$ on the event of survival of the branching process namely  $\set{|\BP_\theta(\infty)| = \infty}$. 
    \end{enumeratea}
\end{lem}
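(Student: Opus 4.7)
The plan is to prove both parts by direct computation and then invoke the classical theory of supercritical Crump--Mode--Jagers (CMJ) branching processes.

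For (a), I would write down the defining Malthusian equation $\int_0^\infty e^{-\lambda t}\,\nu(dt) = 1$, where $\nu$ is the mean intensity measure of $\cN_\theta$, namely $\nu(dt) = (1-\theta)^{-1}\mathbf{1}_{[0,c_\theta]}(t)\,dt$. Substituting $\lambda = 1$ and using the defining identity $e^{-c_\theta} = \theta$ yields
\[
\frac{1}{1-\theta}\int_0^{c_\theta}e^{-t}\,dt \;=\; \frac{1-e^{-c_\theta}}{1-\theta} \;=\; \frac{1-\theta}{1-\theta} \;=\; 1,
\]
identifying $\lambda(\theta) = 1$. I would also note in passing that the process is genuinely supercritical, since the expected total offspring per individual is $c_\theta/(1-\theta) > 1$ for every $\theta \in (0,1)$ (equivalent to the elementary inequality $-\log\theta > 1-\theta$); this guarantees $\pr(|\BP_\theta(\infty)| = \infty) > 0$ and makes the positivity assertion in (b) non-vacuous.

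For (b), the strategy is to appeal to Nerman's a.s.\ and $L^1$ convergence theorem for supercritical CMJ processes. The one substantive hypothesis to verify is the $X\log X$ (Kesten--Stigum) condition
\[
\E\!\left[\hat\xi\,\log^+\hat\xi\right] \;<\; \infty, \qquad \hat\xi \;:=\; \int_0^\infty e^{-\lambda(\theta)t}\,\xi(dt),
\]
with $\xi$ distributed as $\cN_\theta$. In our setting this is essentially free: since $\xi$ is supported on the bounded interval $[0,c_\theta]$, one has $\hat\xi \leq N := \xi([0,c_\theta])$, and $N$ is Poisson with finite mean $c_\theta/(1-\theta)$, which has finite moments of every order. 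Thus Nerman's theorem applies, yielding the asserted a.s.\ and $L^1$ convergence $e^{-t}\BP_\theta(t) \to W_\theta$.

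The final positivity statement $\{W_\theta > 0\} = \{|\BP_\theta(\infty)| = \infty\}$ a.s.\ is a standard consequence: $L^1$ convergence combined with $e^{-t}\E[\BP_\theta(t)] \to \alpha_\theta > 0$ gives $\E[W_\theta] > 0$, and the classical branching dichotomy (see e.g.\ Jagers's CMJ framework) promotes this to strict positivity a.s.\ on survival, with $W_\theta = 0$ on extinction being immediate. I do not foresee any real obstacle: the content is really just the one-line Malthusian computation together with the observation that Poisson offspring satisfy $X\log X$ for free.
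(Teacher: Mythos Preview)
Your proposal is correct and follows essentially the same approach as the paper: for (a) you compute the Laplace transform of the intensity measure at $\lambda=1$ directly (the paper does the equivalent computation of $\E[\cN_\theta(T_\lambda)]$ for $T_\lambda\sim\exp(\lambda)$ and arrives at the same expression $\phi(\lambda)=(1-\theta^\lambda)/(\lambda(1-\theta))$), and for (b) you bound the relevant $X\log X$ quantity by the total Poisson count $\cN_\theta(c_\theta)$, exactly as the paper does. The only minor addition is that the paper explicitly checks uniqueness of $\lambda=1$ by noting $\phi$ is non-increasing, which you leave implicit.
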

\begin{proof}
Recall the offspring point process $\cN_\theta$ that drives the branching process.  To prove (a) we need to find the unique $\lambda$ such that $\phi(\lambda):= \E(\cP_\theta(T_\lambda)) = 1$    where $T_\lambda\sim \exp(\lambda)$ independent of $\cN_\theta$. By construction, 
\begin{align*}
    \phi(\lambda) &= \int_0^{\log{\theta^{-1}}} \frac{x}{1-\theta} \lambda e^{-\lambda x}dx + \frac{\log{\theta^{-1}}}{1-\theta} \pr(T_\lambda > \log{\theta^{-1}}) =\frac{1-\theta^\lambda}{\lambda(1-\theta)}, \\
\end{align*}
\textcolor{black}{and equating the RHS above to $1$ and solving for $\lambda$, we obtain $\lambda=1$. Also, $\lambda = 1$ is the unique solution as the function $\phi(\lambda)$ is non-increasing, which can verified by computing the derivative. This completes the proof of the lemma. }

Part(b) means we need to show 
\begin{equation}
    \E(\cN_\theta(T_\lambda))\log^+(\cN_\theta(T_\lambda))) < \infty.
    \label{eqn:cntheta}
\end{equation} However \[\cN_\theta(T_\lambda)) \stod \cN_\theta(\log(1/\theta)),\] where $\stod$ represents stochastic domination. Since $\cN_\theta(\log(1/\theta))$ has a Poisson distribution, thus all moments exist. This completes the proof of \eqref{eqn:cntheta}. 
\end{proof}

\subsection{Proof of local weak convergence in the Macroscopic regime: Theorem \ref{thm:lwc-macro}:} \label{sec:proof-lwc-macro}

 The 
  proof follows by modifying the arguments of Theorem 4.1 in \cite{BBDS04_macro}. For completeness, we state the main steps in the proof. For a vertex $n\geq 1$ and $k\geq 0$, define $T_k^n = \inf\set{\ell: \deg(n,\ell) = k+1}\geq n$ to be the first time vertex $n$ has its $k$-th child (and $T_0^n = n$) in the discrete time \textcolor{black}{tree} process $\{\TT(j) : j \ge 1\}$. For $k\geq 1$, we label the $k^{th}$ vertex entering the \emph{fringe} of vertex $n$ (the progeny tree \textcolor{black}{or the subtree} associated with vertex $n$) by $\rho_k^n$ i.e \begin{equation*}
    \rho_k^n = \inf\set{m> \rho_{k-1}^n: \text{ vertex }m \text{ attaches to one of } \rho_0^n,\rho_1^n,\dots, \rho_{k-1}^n}\end{equation*}  and $\rho_0^n = n$. For $j\geq 0,k\geq 1$, let $T^n_{j,k} = T^{\rho_j^n}_k$ be the birth time of $k$th child of $j$th vertex in the fringe of vertex $n$ (note that $T^n_{0,k} = T^n_k$ and $T^n_{j,0} = \rho^n_j$), and  \begin{align*}
    \tau_{j,k}^n = \log\left(\frac{T^n_{j,k}}{T^n_{j,k-1}}\right)
\end{align*}be the inter-arrival time in $\log$-scale.
One can think of the above quantity as the `continuous time' (if we encode vertex arrivals by epochs of a Yule process) elapsed for the degree of the $j$th \textcolor{black}{vertex in the fringe of vertex $n$, or simply the $j$th} descendant of vertex $n$, to increase from $k-1$ to $k$. These form a point process of reproduction times of vertices in the fringe of vertex $n$. The following \textcolor{black}{Proposition} shows that the inter-arrival times for these point processes weakly converge to those of the Poisson process $\mathcal{N}_\theta$ defined in section \ref{subsec:results-macro} as $n \to \infty$. Further, the point processes associated with distinct vertices in the fringe become asymptotically independent.

Let $\sE_{0\leadsto 1},\sE_{1\leadsto 2},\dots $ denote the inter-arrival times of the Poisson process $\mathcal{N}_{\theta}$.

\begin{prop}\label{prop:degree-times-n} 
Let $\left(\left(\sE_{j,k}\right)_{k\geq 1}: j\geq 0\right)$ be \emph{iid} copies of $\left(\sE_{k-1\leadsto k}\right)_{k\geq 1}$. Then, as $n \to \infty$,\begin{align*}
 \left(\left(\tau_{j,k}^n\right)_{k\geq 1}: j\geq 0\right) \convd \left(\left(\sE_{j,k}\right)_{k\geq 1}: j\geq 0\right).
\end{align*}
\end{prop}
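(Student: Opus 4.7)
The plan is to leverage the log-time change of variables $s = \log(\ell/m)$, which converts the discrete attachment dynamics seen by a fixed vertex $m$ into an approximately Poisson point process on $[0,\log(1/\theta)]$ of rate $(1-\theta)^{-1}$, i.e.~exactly $\mathcal{N}_\theta$. More precisely, at discrete time $\ell$, conditional on $\mathcal{F}_\ell$, the chance that the incoming vertex $\ell+1$ attaches to $m$ is $\frac{1}{\ell-\lfloor\theta\ell\rfloor+1}\mathbf{1}_{m\in[\lfloor\theta\ell\rfloor,\ell]}\sim \frac{1}{(1-\theta)\ell}$, and the window of $\ell$'s for which $m$ is eligible is $[m,\lfloor m/\theta\rfloor]$. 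Under $s=\log(\ell/m)$, the infinitesimal per-step probability $\frac{1}{(1-\theta)\ell}$ combines with the step size $ds\approx d\ell/\ell$ to give intensity $(1-\theta)^{-1}\,ds$ on $[0,\log(1/\theta)]$.

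I would first prove the claim for $j=0$, i.e.~only for vertex $n$ itself. Writing the probability that $n$ has no child by time $\lfloor n e^s\rfloor$ as the product $\prod_{i=n}^{\lfloor n e^s\rfloor -1}\bigl(1-\frac{1}{i-\lfloor\theta i\rfloor+1}\bigr)$, a direct Riemann-sum/Taylor computation gives the limit $\exp(-s/(1-\theta))$ for $s\in[0,\log(1/\theta)]$ and $\theta^{1/(1-\theta)}$ thereafter, identifying $\tau^n_{0,1}\convd\mathcal{E}_{0\leadsto1}$ (with $\infty$ encoding ``no child ever''). The same calculation, applied to the residual log-window starting from $T^n_{0,k-1}$, gives the conditional law of $\tau^n_{0,k}$ and yields the joint convergence $(\tau^n_{0,k})_{k\ge 1}\convd(\mathcal{E}_{k-1\leadsto k})_{k\ge 1}$ by the Markov property in $\mathcal{F}_\ell$.

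Next, I would extend to joint convergence over $j$. The $j$-th fringe vertex $\rho^n_j$ satisfies $\rho^n_j/n$ tight in $[1,1/\theta]$, so each offspring process lives, after centering at $\log\rho^n_j$, on $[0,\log(1/\theta)]$. Since the single-step attachment probability to any prescribed vertex is $O(1/\ell)$, the probability that two distinct descendants share a ``child-birth'' time is $O(1/\ell^2)$ per step, summing to $O(1/n)$ overall. This lets me couple the descendants' point processes with independent copies of $\mathcal{N}_\theta$ (one per descendant, shifted to start at its birth log-time) and show that the total variation between the two constructions, restricted to the first finitely many children of finitely many descendants, vanishes. Proceeding by induction on $j$---conditioning on $\mathcal{F}_{\rho^n_{J}}$ and applying the single-vertex result started at $\rho^n_J$ in its own residual window---then yields the full joint convergence.

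The main obstacle is the \emph{random} birth times $\rho^n_J$: these are neither deterministic nor independent of the earlier reproduction processes, and their log-windows $[\log\rho^n_J,\log(\rho^n_J/\theta)]$ overlap in a complicated way with those of $n,\rho^n_1,\ldots,\rho^n_{J-1}$. The cleanest way to handle this is to work with conditional laws given $\mathcal{F}_{\rho^n_J}$ and use the fact that, conditionally, the future attachment dynamics of $\rho^n_J$ depend on earlier descendants only through the single-step disjointness constraint, whose cumulative effect is the $O(1/n)$ defect above. Combining the conditional Poisson limit with a standard dominated-convergence argument on the conditional distribution of $\rho^n_J/n$ closes the induction and establishes the displayed joint convergence.
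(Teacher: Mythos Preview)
Your proposal is on the right track and rests on the same key insight as the paper---the change of variables $s=\log(\ell/m)$ turns the per-step attachment probability $\frac{1}{(1-\theta)\ell}\mathbf{1}\{m\in[\lfloor\theta\ell\rfloor,\ell]\}$ into approximate Poisson intensity $(1-\theta)^{-1}$ on $[0,\log(1/\theta)]$---but the implementation you outline is genuinely different from what the paper does. The paper proceeds analytically: it fixes $K$, writes $\E[f(\tau^n_{1:K})]$ as an integral over $\bR_+^{K^2+K}$ with an explicit integrand $\vP_{0,n}(\mvx)$ built from the one-step attachment probabilities, approximates this integrand in three successive steps (replacing $1-p$ by $e^{-p}$, removing floors, replacing harmonic sums by logarithms), performs the log change of variables $y_{j,k}=\tau^n_{j,k}(\mvx)$ with an explicit Jacobian, and then applies dominated convergence to identify the limiting joint density $\prod_j \fF_K(\mvy_j)$. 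All $K(K+1)$ inter-arrival times are handled \emph{simultaneously}; there is no induction on $j$ and no coupling construction. Your approach instead does the $j=0$ case directly and then inducts, handling the asymptotic independence across $j$ via a coupling/``collision'' argument. What your route buys is probabilistic intuition and avoidance of a long integral computation; what the paper's route buys is that the awkward issue you flag---random birth labels $\rho^n_J$ whose log-windows overlap those of earlier descendants---never has to be isolated, because the explicit density already encodes all the interactions and the ``valid $\mvx$'' constraint (distinct $T^n_{j,k}$) plays the role of your collision bound.

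One cautionary remark on your induction: conditioning on $\cF_{\rho^n_J}$ does not freeze the \emph{future} offspring of $\rho^n_0,\ldots,\rho^n_{J-1}$, so the inductive hypothesis cannot simply be ``the first $J$ offspring processes have already converged''. You need joint convergence of all $J+1$ processes, including children of earlier descendants born after $\rho^n_J$. Your $O(J/\ell^2)$-per-step correction is the right quantitative input here, but to close the argument rigorously you would still have to package it carefully---e.g.\ by thinning a single Poisson process of rate $J/(1-\theta)$ into $J$ independent ones and controlling the discrepancy in total variation over the full window, rather than conditioning at a random time. This is doable, but is more work than your sketch suggests; the paper sidesteps it entirely by working with the joint density from the outset.
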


We now present a direct consequence of the Proposition \ref{prop:degree-times-n}. The following corollary is the main tool used to prove the local weak convergence. Let $U$ be a uniform $U[0,1]$ random variable independent of the network process $\set{\cT(n)}_{n\geq 1}$.

\begin{cor}
\label{cor:degree-times-uniform}
   Let $\set{\left(\sE_{j,k}\right)_{k\geq 1}:j\geq 0}$ be \emph{iid} copies of $\left(\sE_{k-1\leadsto k}\right)_{k\geq 1}$ and $T_1$ be an exponential random variable of rate $1$ independent of $\set{\left(\sE_{j,k}\right)_{k\geq 1}:j\geq 0}$. Then 
    \begin{equation*}
        \left(\log n - \log \lceil n U\rceil,\left(\tau_{j,k}^{\lceil nU \rceil}\right)_{k\geq 1}: j\geq 0 \right) \convd \left(T_1,\left(\sE_{j,k}\right)_{k\geq 1}:j\geq 0 \right).
    \end{equation*}
\end{cor}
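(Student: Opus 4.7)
The idea is to reduce the corollary to Proposition \ref{prop:degree-times-n} via a conditioning argument on $U$. The scalar $\log n - \log\lceil nU\rceil$ depends only on $U$ and converges pathwise, while the inter-arrival array $(\tau_{j,k}^{\lceil nU\rceil})$ converges in distribution to a limit that does not depend on $U$. Integrating bounded continuous test functions against the law of $U$ will then produce joint convergence with the asserted independence and identify the limit of the scalar part as $T_1$.

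For the scalar piece, since $U>0$ almost surely, $\lceil nU\rceil/n \convas U$ as $n\to\infty$, and by the continuous mapping theorem
\[
\log n - \log\lceil nU\rceil \;=\; -\log\bigl(\lceil nU\rceil/n\bigr) \;\convas\; -\log U.
\]
Since $U\sim U[0,1]$, the random variable $-\log U$ has the $\exp(1)$ law, so it equals $T_1$ in distribution.

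For the inter-arrival array I would condition on $U=u \in (0,1]$ and set $m_n = m_n(u) := \lceil nu\rceil$, which tends to infinity as $n\to\infty$. Because $U$ is independent of the tree process $\{\cT(n): n\ge 1\}$, the conditional law of $(\tau_{j,k}^{\lceil nU\rceil})_{j\ge 0, k\ge 1}$ given $U=u$ is precisely the law of $(\tau_{j,k}^{m_n(u)})_{j\ge 0, k\ge 1}$. Applying Proposition \ref{prop:degree-times-n} along $m_n(u)\to\infty$ yields
\[
(\tau_{j,k}^{m_n(u)})_{j\ge 0, k\ge 1} \;\convd\; (\sE_{j,k})_{j\ge 0, k\ge 1},
\]
and the crucial point is that this limiting law does not depend on $u$.

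To combine the two, take any bounded continuous test function $F$ on $\bR\times \bR^{\infty}$ (equipped with the product topology on the $\tau$-coordinates), and set $g_n(u) := \E[F(\log n - \log\lceil nu\rceil,\, (\tau_{j,k}^{\lceil nu\rceil})_{j,k})]$. Slutsky's theorem combined with the two steps above yields $g_n(u) \to \E[F(-\log u,\, (\sE_{j,k})_{j,k})]$ for each $u\in(0,1]$, and bounded convergence then gives $\E[g_n(U)] \to \E[F(-\log U,\, (\sE_{j,k})_{j,k})]$ with $(\sE_{j,k})$ independent of $U$ in the limit. Rewriting the right-hand side as $\E[F(T_1, (\sE_{j,k}))]$ completes the proof. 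The main delicate point is the conditioning-to-unconditional exchange, but this is routine here: the limiting law of the array does not depend on $u$ and $F$ is bounded, so the $u$-dependence in $g_n(u)$ enters only through the deterministic shift $-\log u$ in the first coordinate, and dominated convergence applies without further uniformity work.
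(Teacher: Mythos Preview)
Your proof is correct and is precisely the standard argument the paper has in mind when it calls the corollary ``a direct consequence'' of Proposition~\ref{prop:degree-times-n} without supplying details. The conditioning on $U=u$, the application of the proposition along $m_n(u)=\lceil nu\rceil\to\infty$, the use of Slutsky to adjoin the deterministic first coordinate, and the final dominated-convergence step over $u$ are exactly what is needed, and there are no gaps.
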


We now complete the proof of Theorem \ref{thm:lwc-macro} using the above lemma. 

\begin{proof}[Proof of Theorem \ref{thm:lwc-macro}]
\textcolor{black}{For any $t \in \mathbb{R}_+$ and $\mathbf{w} \in \mathbb{R}_+^{\mathbb{N}_0 \times \mathbb{N}}$, one can construct a unique rooted tree $\mathscr{T}(t,\mathbf{w}) \in \mathbb{T}$ via the following temporal evolution: (a) the root $\emptyset$ is born at time $0$, (b) for any $j \ge 0, k \ge 1$, $w_{j,k}$ denotes the age of the $j$th born vertex in the tree when its $k$th child is born (with root being the $0$th born vertex), (c) retain only those vertices with birth times $\le t$. For $0 \le j < |\mathscr{T}(t,\mathbf{w})|$, denote by $\tau_j(t,\mathbf{w})$ the birth time of the $j$th born vertex in $\mathscr{T}(t,\mathbf{w})$, and let $n(t,\mathbf{w}) := \sup\{j \ge 0 : \tau_j(t,\mathbf{w}) \le t\}$. Define $\mathscr{W} := \{(t,\mathbf{w}) \in  \mathbb{R}_+ \times \mathbb{R}_+^{\mathbb{N}_0 \times \mathbb{N}} : n(t,\mathbf{w}) < \infty, \, \tau_j(t,\mathbf{w}) \neq t \text{ for any } 0 \le j \le n(t,\mathbf{w})\}$. Then, it is clear that $\mathscr{T}: \mathscr{W} \rightarrow \mathbb{T}$ is continuous \textcolor{black}{with respect to the associated product topology.}}

Now, consider the evolution of the network process $\set{\cT(n)}_{n\geq 1}$ in the logarithmic time scale, namely, assign the birth time of $0$ to $v_0,v_1$ and $\log j$ to $v_j$ for $j \ge 2$. For any $n \ge 2$ and $2 \le \ell \le n$, the \emph{fringe} of vertex $\ell$ in $\cT(n)$ can be expressed as $\mathscr{T}(\log n - \log \ell, \mathbf{w}^{(\ell)})$, where $w^{(\ell)}_{j,k} =\tau_{j,k}^\ell = \log\left(\frac{T^\ell_{j,k}}{T^\ell_{j,k-1}}\right)$, $j \ge 0, k \ge 1$. In particular, the fringe of a uniformly chosen vertex is given by
$\mathscr{T}(\log n - \log \lceil n U\rceil, \mathbf{w}^{(\lceil n U\rceil)})$. By Corollary ~\ref{cor:degree-times-uniform}, $(\log n - \log \lceil n U\rceil, \mathbf{w}^{(\lceil n U\rceil)}) \convd (T_1, \mathbf{w}^*)$ where $w^*_{j,k} = \sE_{j,k}$, $j \ge 0, k \ge 1$. Moreover, we have $\prob\left((T_1, \mathbf{w}^*) \in \mathscr{W}^c\right)=0$.
Hence, by the continuous mapping theorem, $\mathscr{T}(\log n - \log \lceil n U\rceil, \mathbf{w}^{(\lceil n U\rceil)}) \convd \mathscr{T}(T_1, \mathbf{w}^*)$, which has the law $\fpm_{\theta}$. This implies the local weak convergence in the expected fringe sense.

The convergence of the (expected) empirical degree distribution is an immediate consequence of this local weak convergence.
\end{proof}

The rest of the section is dedicated to the proof of the Proposition \ref{prop:degree-times-n}. 
\begin{proof}[Proof of Proposition \ref{prop:degree-times-n}]
     Throughout the proof, we fix $K \in \mathbb{N}$. We start by computing the joint density of $\sE_{K}= (\sE_{0\leadsto 1},\sE_{1 \leadsto 2},\dots,\sE_{K-1\leadsto K})$. \textcolor{black}{Recall $c_{\theta}=\log \theta^{-1}$.} Let $h(x) = \frac{\ind\set{x \leq c_\theta}}{1-\theta}$ and $H(x) = \int_0^x h(y)dy = \frac{\min(x,c_\theta)}{1-\theta}$. For any $\mvx = (x_1,x_2,\dots,x_K)\in \bR^K_+$ and $1 \le k \le K$, let $\sigma_i(\mvx) = \sum\limits_{l=1}^i x_l$ and $\sigma_0(\mvx) = 0$.
Using these functions and the definition of the process $\mathcal{N}_{\theta}$, it is easy to verify that the joint density of the inter-arrival times $\sE_{K}= (\sE_{0\leadsto 1},\sE_{1 \leadsto 2},\dots,\sE_{K-1\leadsto K})$ is given by 

\textcolor{black}{
\begin{align}\label{eqn:joint-density}
    \fF_K(\mvx) =  \left(\prod\limits_{i=1}^K h(\sigma_i(\mvx)) \right)\exp\left(-H(\sigma_K(\mvx))\right) = \frac{\ind\set{\sigma_{K}(\mvx) \leq c_{\theta}}}{(1-\theta)^K} \exp(-H(\sigma_K(\mvx)) , \ \mvx \in \bb{R}_+^K.
\end{align}}
We also note that the density $\fF_K$ is continuous almost surely on $\bR_+^K$ w.r.t.~Lebesgue measure. \textcolor{black}{Next,} we prove \begin{align*}
        \tau^n_{1:K} := \left(\tau^n_{j,k}:0\leq j\leq K, 1 \leq k\leq K\right) \convd \sE_{1:K} := \left(\sE_{j,k}:0\leq j\leq K, 1 \leq k\leq K\right). 
    \end{align*} To this extent, we argue that \begin{align}\label{eqn:exp-convergence}
        \lim\limits_{n\to \infty}\E\left(f(\tau^n_{1:K})\right) = \E\left(f(\sE_{1:K})\right)
    \end{align} for any continuous function $f:\bR_+^{K^2+K}\to \bR$ with compact support $\cS \subseteq [0,L]^{K^2+K}$ for some $L \geq 0$. The proof of the above follows from \textcolor{black}{ the following} two steps:
    \begin{itemize}
        \item[(i)] We first express $\E\left(f(\tau^n_{1:K})\right)$ as an integral over $\bR_+^{K^2+K}$.
        \item[(ii)] \textcolor{black}{We then use} a sequence of approximations to the law of $\tau^n_{1:K}$ to prove~\eqref{eqn:exp-convergence}.
    \end{itemize}

\noindent \textbf{Step} (i): We first develop some notation to express the distribution of $\tau^n_{1:K}$. Recall $(\rho_j^n:j\geq 0)$ and $(T_{j,k}^n: j,k\geq 0)$ defined at the beginning of Section~\ref{sec:proof-lwc-macro}.

     Note that the values of $(\rho_j^n:0\leq j\leq K)$ are determined by the values of $(T_{j,k}-T_{j,k-1}: 0\leq j\leq K, 1 \leq k \leq K)$. Thus, for $\mvx \in \bR^{K^2+K}_+$, one can define $(\rho_j^n(\mvx):0\leq j\leq K)$, the values of $(\rho^n_j:0\leq j\leq K)$ on the event $E_{\mvx} = \set{T^n_{j,k}-T^n_{j,k-1} = \lfloor x_{j,k}\rfloor :0\leq j\leq K, 1\leq k\leq K}$, as follows:
    \begin{itemize}
    \item \textcolor{black}{$\rho_0^n(\mvx) = T^n_{0,0}(\mvx) = n$} and $T^n_{0,k}(\mvx) = \rho_0^n(\mvx) + \sum\limits_{l=1}^k \lfloor x_{0,l}\rfloor $ for $1 \le k\leq K$.
    \item Given $\set{T_{j,k}^n(\mvx):j\leq i,1\leq k\leq K}$ for $0 \le i < K$, define \begin{align*}
        \rho^n_{i+1}(\mvx) &= \text{the }(i+1)^{th} \text{ smallest value in }\set{T_{j,k}^n(\mvx):0\leq j\leq i,1\leq k\leq K},\\
        T_{i+1,k}^n(\mvx) &=  \rho^n_{i+1}(\mvx) + \sum\limits_{l=1}^k \lfloor x_{i+1,l}\rfloor, \ \textcolor{black}{T^n_{i+1,0} = \rho^n_{i+1}(\mvx)}.
    \end{align*}
\end{itemize} 
Also, define $\tau^n_{j,k}(\mvx) = \log\left(\frac{T_{j,k}^n(\mvx)}{T_{j,k-1}^n(\mvx)}\right)$ and $\tau^n_{1:K}(\mvx) = \left(\tau^n_{j,k}(\mvx):0\leq j\leq K, k\leq K\right)$ - the values of inter-arrival times in $\log$-scale on the event $E_{\mvx}$. 

 On the event $E_{\mvx}$, an incoming vertex  attaches to a vertex in the fringe of vertex $n$ only at times $
    \cA_{\mvx} = \set{T^n_{j,k}(\mvx):0\leq j\leq K, 1\leq k\leq K }$. Also, let $
    N_{\mvx} =\sup \cA_{\mvx}$ and $
    \cI_{\mvx} = \set{n+1,n+2,\dots,N_{\mvx}}\setminus\cA_{\mvx}$ be the times when the fringe is inactive, meaning there are no attachments.
    
In the discrete-time process $\set{\cT(n):n\geq 1}$, an incoming vertex can only attach to a single existing vertex in the network. So, we say $\mvx \in \bR^{K^2+K}$ is \emph{valid} if all $T_{j,k}^n(\mvx)$ are distinct for $0\leq j\leq K$ and $1\leq k\leq K$. 

For $0\leq j\leq K$ and $i \in \mathbb{N}$, { let $p^n_j(i,\mvx)$ denote the probability that an incoming vertex at time $i$ attaches to the $j^{th}$ vertex in the fringe of vertex $n$ given the history up to time $i-1$ on the event $\set{T^n_{j,k}-T^n_{j,k-1} = \lfloor x_{j,k}\rfloor \  \forall \ k \text{ such that } T_{j,k}^n(\mvx) < i}$. Explicitly, we have  }
\begin{align}\label{eqn:prob-step}
    p^n_j(i,\mvx) &= \ind\set{i\leq T_{j,K}^n(\mvx)}
        \frac{\ind\set{i\theta \leq \rho_{j}^n(\mvx)}}{\lfloor i(1-\theta)\rfloor}.
\end{align} 
\textcolor{black}{Define,} 
\begin{align}
\label{eqn:prob-Ex}
    \vP_{0,n}(\mvx) := \pr(E_{\mvx}) &\textcolor{black}{= \ind\set{\mvx \text{ is \emph{valid}}} \prod_{i\in \cI_{\mvx}} \left(1-\sum\limits_{j :\rho^n_j(\mvx) <i} p_j^n(i,\mvx)\right) . \prod\limits_{j,k\leq K}p_{j}^n\left(T_{j,k}(\mvx),\mvx\right)}\notag\\
    &= \ind\set{\mvx \text{ is \emph{valid}}} \prod_{i\in \cI_{\mvx}} \left(1-\sum\limits_{j=0}^K p_j^n(i,\mvx)\right) . \prod\limits_{j,k\leq K}p_{j}^n\left(T_{j,k}(\mvx),\mvx\right).
\end{align} This concludes step (i).

\vspace{0.5cm}

\noindent \textbf{Step} (ii): We now approximate $\vP_{0,n}(\mvx)$ using the following sequence of approximations. \begin{enumerate}
    \item We replace the terms in the first product in~\eqref{eqn:prob-Ex} by exponentials to get $\vP_{1,n}(\mvx)$ where \begin{align}\label{eqn:prob-ex-1}
    \vP_{1,n}(\mvx) = \ind\set{\mvx \text{ is \emph{valid}}} \prod\limits_{i\in \cI_{\mvx}} \exp\left(-\sum\limits_{j=0}^K p_j^n(i,\mvx)\right)  . \prod\limits_{j,k\leq K}p_{j}^n\left(T_{j,k}^n(\mvx),\mvx\right).
\end{align}
\item We next replace the $\lfloor i(1-\theta)\rfloor$ with $i(1-\theta)$ in the denominator of ~\eqref{eqn:prob-step} to get $\vP_{2,n}(\mvx)$ with \begin{align}\label{eqn:prob-ex-2}
\vP_{2,n}(\mvx) = \ind\set{\mvx \text{ is \emph{valid}}} \prod\limits_{i\in \cI_{\mvx}} \exp\left(-\sum\limits_{j=0}^K q_j^n(i,\mvx)\right)  \cdot \prod\limits_{j,k\leq K}q_{j}^n\left(T_{j,k}^n(\mvx),\mvx\right),
\end{align} 
where 
\begin{align}\label{eqn:q_j-defn}
    q^n_j(i,\mvx) &= \ind\set{i\leq T_{j,K}^n(\mvx)}\frac{\ind\set{i\theta \leq \rho_{j}^n(\mvx)}}{i(1-\theta)}.
\end{align}

\item  Finally, we approximate the sums of type $\sum\limits_{i=m_1}^{m_2}\frac{1}{i}$ with $\log\frac{m_2}{m_1}$ appearing in the exponent of the first term in~\eqref{eqn:prob-ex-2} to get $\vP_{3,n}(\mvx)$ where \begin{align}\label{eqn:prob-ex-3}
        \vP_{3,n}(\mvx) = \ind\set{\mvx \text{ is \emph{valid}}} \exp\left(-\sum\limits_{j=0}^K r_j^n(\mvx)\right)  . \prod_{j=0}^{K}\prod_{k=1}^K q_{j}^n\left(T_{j,k}^n(\mvx),\mvx\right).
    \end{align} where \begin{align}\label{eqn:r_j-defn}
        r^n_j(\mvx) = H\left(\log\left(\frac{T_{j,K}^n(\mvx)}{T_{j,0}^n(\mvx)}\right)\right) 
    \end{align} 
\end{enumerate} The following lemma bounds the errors made during each approximation. The proof of the lemma closely follows the proof of Lemma 5.3 in \cite{BBDS04_macro}, so \textcolor{black}{we} skip the proof.  
\begin{lem}\label{lem:bigggg-lemma}\textcolor{black}{Let $\cC$ be a bounded subset in $R_{+}^{K^2+K}$. Then we have 
\begin{align*}
    \sup_{i=0,1,2} \, \sup_{\mvx \in \cC}\left|\vP_{i,n}(\mvx) - \vP_{i+1,n}(\mvx) \right|\leq  \frac{C}{n}
\end{align*}for some constant $C \geq 0$ depending of the set $\cC$ and $K$.}
\end{lem}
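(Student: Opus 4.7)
The plan is to bound each of the three differences $|\vP_{i,n}(\mvx)-\vP_{i+1,n}(\mvx)|$, $i=0,1,2$, separately and uniformly over the compact set $\cC$. Before beginning, record three useful facts that hold on $\cC$ for all $n$ large enough: (i) every $T_{j,k}^n(\mvx)$ lies between $n$ and $n+C_1$ for a constant $C_1$ depending on $K$ and $\cC$ alone, so every denominator of the form $i(1-\theta)$ or $\lfloor i(1-\theta)\rfloor$ that appears is of order $n$; (ii) $|\cI_\mvx|$ is bounded by a constant depending on $K$ and $\cC$; and (iii) $p_j^n(i,\mvx)$ and $q_j^n(i,\mvx)$ are each at most $C_2/n$. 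The common multiplicative factor $\prod_{j,k\le K} p_j^n\bigl(T_{j,k}^n(\mvx),\mvx\bigr)$ (or its $q$-analogue) never exceeds $1$, so at each step it will suffice to bound the difference of the two ``exponential'' factors, together with the change (if any) in the discrete product of attachment probabilities.

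For $i=0$, the two expressions differ only in the product $\prod_{i\in\cI_\mvx}(1-s_i(\mvx))$ vs.\ $\prod_{i\in\cI_\mvx} e^{-s_i(\mvx)}$, where $s_i(\mvx):=\sum_{j=0}^K p_j^n(i,\mvx)$. Combining the telescoping inequality $|\prod a_i-\prod b_i|\le\sum|a_i-b_i|$ for sequences in $[0,1]$ with the elementary bound $|(1-s)-e^{-s}|\le s^2$ for $s\in[0,1/2]$, and using (ii) and (iii) to get $s_i=O(1/n)$ with $|\cI_\mvx|=O(1)$, yields an error of $O(1/n^2)\le C/n$.

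For $i=1$, the change is from $p_j^n$ to $q_j^n$, i.e.\ from $\lfloor i(1-\theta)\rfloor$ to $i(1-\theta)$ in the denominators. From $|1/\lfloor i(1-\theta)\rfloor-1/(i(1-\theta))|=O(1/n^2)$, we have $|p_j^n(i,\mvx)-q_j^n(i,\mvx)|=O(1/n^2)$. The change in the exponent then sums to $O(1/n^2)$ and transfers to the exponential factor via $|e^{-x}-e^{-y}|\le|x-y|$; the change in the discrete product over at most $K^2+K$ factors is controlled by writing it as a product of ratios $q_j^n/p_j^n$, each equal to $1+O(1/n)$, so the full product equals $1+O(1/n)$. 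Multiplying by the bounded common factors gives $|\vP_{1,n}-\vP_{2,n}|\le C/n$.

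For $i=2$ only the exponent changes, from $\sum_{i\in\cI_\mvx}q_j^n(i,\mvx)$ to $r_j^n(\mvx)=H\bigl(\log(T_{j,K}^n(\mvx)/T_{j,0}^n(\mvx))\bigr)$. By definition of $q_j^n$ and $H$, this reduces to comparing a partial harmonic sum with the associated logarithm; the Euler--Maclaurin estimate $\bigl|\sum_{i=m_1}^{m_2}1/i-\log(m_2/m_1)\bigr|=O(1/m_1)$ with $m_1\asymp n$ produces the leading $O(1/n)$ error, and a further $O(1/n)$ is incurred by replacing the summation set $\cI_\mvx$ by the full integer interval $\{T_{j,0}^n+1,\dots,T_{j,K}^n\}$ (at most $K^2+K$ indices removed, each contributing $O(1/n)$). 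Transferring through the exponential and multiplying by the common product yields the required bound. The main obstacle is not any individual estimate --- each reduces to a standard elementary bound --- but rather the careful bookkeeping needed in steps 2 and 3, where one must simultaneously track the replacement of discrete denominators and summations by their continuous analogues, and the mismatch between $\cI_\mvx$ and the ideal integer intervals on which summation matches the target integral exactly, while keeping all constants uniform in $\mvx\in\cC$. This mirrors the template of~\cite{BBDS04_macro}*{Lemma 5.3}.
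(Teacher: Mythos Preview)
Your proposal is correct and follows precisely the template of \cite{BBDS04_macro}*{Lemma~5.3}, which is all the paper invokes (the paper omits the proof entirely, stating only that it ``closely follows the proof of Lemma~5.3 in \cite{BBDS04_macro}''). The three preliminary observations (i)--(iii) and the step-by-step treatment of each approximation via the elementary bounds $|(1-s)-e^{-s}|\le s^2$, $|1/\lfloor m\rfloor - 1/m|=O(1/m^2)$, and the harmonic--logarithm comparison are exactly the ingredients needed; your bookkeeping of the symmetric difference between $\cI_\mvx$ and the ideal interval $\{T_{j,0}^n+1,\dots,T_{j,K}^n\}$ is the one place requiring care, and you handle it correctly (both the $O(1)$ points in $\cA_\mvx$ and the $O(1)$ points in $\{n+1,\dots,\rho_j^n\}$ contribute $O(1/n)$ each).
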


\noindent{\bf Completing the Proof of Proposition~\ref{prop:degree-times-n}:} The arguments in the proof are very similar to the proof of Proposition 5.1 in \cite{BBDS04_macro}. We mention the main details for \textcolor{black}{the sake of} completeness. Since $\tau^n_{1:K}$ is a discrete random variable taking values in $\bN_+^{K^2+K}$, and using~\eqref{eqn:prob-Ex}, we have $$
    \E(f(\tau^n_{1:K})) = \int_{\bR_+^{K^2 + K}} f(\tau^n_{1:K}(\mvx)) P_{0,n}(\mvx)d\mvx. $$ Therefore, by Lemma~\ref{lem:bigggg-lemma}, we have \begin{align*}
    \left|\E(f(\tau^n_{1:K})) - \int_{\bR_+^{K^2+K}}f(\tau^n_{1:K}(\mvx))\vP_{3,n}(\mvx)\right| \leq \frac{C}{n}
\end{align*} for some constant $C \geq 0$ \textcolor{black}{depending only on $K$ and $L$ where the support of $f$ is $\mathcal{S} \subseteq[0,L]^{K^2 + K}$}. Thus, to prove~\eqref{eqn:exp-convergence}, it is sufficient to show \begin{align}\label{eqn:sufficient-condition}
    \lim\limits_{n\to \infty}\int_{\bR_+^{K^2+K}} f(\tau^n_{1:K}(\mvx)) \vP_{3,n}(\mvx)d\mvx  = \E\left(f(\sE_{1:K})\right).
\end{align}


To compute the integral on LHS of~\eqref{eqn:sufficient-condition}, consider the transformation \textcolor{black}{that transforms} the inter-arrival times \textcolor{black}{to an} exponential scale, where we make the change of variables $\mvx \to \mvy$ with { $y_{j,k} = \tau^n_{j,k}(\mvx) = \log\left(\frac{T_{j,k}^n(\mvx)}{T_{j,k-1}^n(\mvx)}\right), \ 0 \le j \le K, 1 \le k \le K$}. More precisely, for $\mvy \in \bR_+^{K^2 + K}$, we define the following:\begin{itemize}
    \item Let $\hat{\rho}^n_0(\mvy) = n$.
    \item For $i < K$, given $\set{\hat{\rho}^n_j(\mvy):0\leq j\leq i}$, let $\hat{\rho}^n_{i+1}(\mvy)$ to be the value of $\rho^n_{i+1}(\mvy)$ when $\set{T_{j,k}^n-T_{j,k-1}^n= \lfloor \hat{\rho}^n_j(\mvy)\exp\left(\sum\limits_{l=1}^{k-1}y_{j,l}\right)\cdot\left(\exp(y_{j,k})-1\right) \rfloor \ : 0\leq j\leq i,1\leq k\leq K}$. 
\end{itemize}  
We define $G_n: \mathbb{R}^{K^2 + K} \rightarrow \mathbb{R}^{K^2 + K}$ by $$G_n(\mvy) = \left(\hat{\rho}^n_j(\mvy)\exp\left(\sum\limits_{l=1}^{k-1}y_{j,l}\right)\cdot \left(\exp(y_{j,k})-1\right): 0\leq j\leq K, 1\leq k\leq K \right).$$ We compute the integral on LHS of~\eqref{eqn:sufficient-condition} after applying the transformation $\mvx \mapsto G_n(\mvy)$ \textcolor{black}{(which, as we will see, is the `approximate' inverse of the transformation $\mvy \mapsto \tau^n_{1:K}(\mvx)$)}. 
 As noted in \cite{BBDS04_macro}, the Jacobian of the transformation is given by \begin{align}\label{Jac}
    \fJ_n(\mvy) = \prod_{j=0}^{K}\prod_{k=1}^K \left[\hat{\rho}^n_j(\mvy)\exp\left({\sum_{i=1}^{k}y_{j,i}}\right)\right].
    \end{align} 
    Define $\hat{T}^n_{j,k}(\mvy) = T^n_{j,k}(G_n(\mvy))$ and $\hat{\tau}^n_{j,k}(\mvy)=\tau^n_{j,k}(G_n(\mvy))$. Then after the transformation, the integral on the LHS of~\eqref{eqn:sufficient-condition} is\begin{align*}
        \int_{\bR_+^{K^2+K}} f(\tau^n_{1:K}(\mvx)) \vP_{3,n}(\mvx)d\mvx = \int_{\bR_+^{K^2+K}} f(\hat{\tau}^n_{1:K}(\mvy)) \vP_{3,n}(G_n(\mvy)) \fJ_n(\mvy)d\mvy. 
    \end{align*} 
    Again, following the arguments in the proof of Proposition 5.1 of \cite{BBDS04_macro}, we see that the support of the integrand on the right-hand side of the above display is contained in $[0, \log\left(e^L + K\right)]^{K^2+K}$.
Recalling $q^n_j(i,\mvx)$ defined in~\eqref{eqn:q_j-defn} and $c_\theta = \log \theta^{-1}= -\log \theta$, we have for any $0 \le j \le K, 1 \le k \le K$,
\begin{align*}
    q^n_j(\hat{T}^n_{j,k}(\mvy),G_n(\mvy)) 
    &= \frac{h\left(\sigma_k(\hat{\tau}_j^n(\mvy))\right)}{\hat{T}^n_{j,k}(\mvy)}
\end{align*}
where $\hat{\tau}^n_{j}(\mvy) := (\hat{\tau}^n_{j,k}(\mvy): 1\leq k\leq K)$. Combining this with~\eqref{Jac}, we obtain
\begin{align}\label{eqn:bound-for-dct}
\begin{split}
&\left[\prod\limits_{j=0}^K\prod\limits_{k=1}^K q^n_j(\hat{T}^n_{j,k}(\mvy),G_n(\mvy))\right] \fJ_n(\mvy) = \left(\prod_{j=0}^{K}\prod_{k=1}^K h(\sigma(\hat{\tau}^n_{j}(\mvy)))\right) \left(\prod_{j=0}^{K}\prod_{k=1}^K \frac{\hat{\rho}^n_j(\mvy)\exp\left({\sum_{i=1}^{k}y_{j,i}}\right)}{\hat{T}_{j,k}^n(\mvy)}\right).
\end{split}
\end{align} 
Therefore, if  $\hat{\tau}_{1:K}^n(\mvy) \in \cS$, the LHS of~\eqref{eqn:bound-for-dct}, and hence $\vP_{3,n}(G_n(\mvy)) \fJ_n(\mvy)$, is uniformly bounded over $n$. Also, from the definition of $r^n_j$ in~\eqref{eqn:r_j-defn} and recalling the function $H$, we have $
    r^n_j(G_n(\mvy)) = H(\sigma_K(\hat{\tau}_j^n(\mvy)))$. Since $\hat{\rho}^n_j(\mvy) \geq n$, we have using the bounds on $\hat{T}^n_{j,k}(\mvy)$ obtained above that $\lim\limits_{n\to \infty} \hat{\tau}^n_{j,k}(\mvy) = y_{j,k}$. Using this along with~\eqref{eqn:bound-for-dct}, we thus obtain
    \begin{align}
        \lim\limits_{n\to\infty} \prod\limits_{j=0}^K \prod\limits_{k=1}^K \left[q^n_j(\hat{T}^n_{j,k}(\mvy), G_n(\mvy))\right] \fJ_n(\mvy) &= \prod_{j=0}^{K}\prod_{k=1}^K h(\sigma_k(\mvy_j))\label{eqn:pointwise-limit-1},\\
        \lim\limits_{n\to \infty}r_j(G_n(\mvy)) &= H(\sigma_k(\mvy_j))\label{eqn:pointwise-limit-2},
    \end{align} where $\mvy_j := (y_{j,k} : 1 \le k \le K)$.
    
    Following the arguments in the proof of Lemma 5.4 of \cite{BBDS04_macro}, it is easy to see that the Lebesgue measure of the set $\set{\mvy : G_n(\mvy) \text{ is \emph{not} } \emph{valid}} \cap [0, \log\left(e^L + K\right)]^{K^2+K}$ converges to zero. We skip the details of this fact. 
    Further, by~\eqref{eqn:pointwise-limit-1},~\eqref{eqn:pointwise-limit-2} and definition of $P_{3,n}(\mvx)$ in~\eqref{eqn:prob-ex-3}, we have on the set $\set{\mvy : G_n(\mvy) \text{ is } \emph{valid}} \cap [0, \log\left(e^L + K\right)]^{K^2+K}$, 
\begin{align*}
 \lim\limits_{n\to \infty}\vP_{3,n}(G_n(\mvy)) \fJ_n(\mvy) =  \prod_{j=0}^K \left[ \exp\left(-H(\sigma_K(\mvy_j))\right)\prod_{k=1}^K h(\sigma_k(\mvy_j))\right].
\end{align*} Hence, by DCT, we have \begin{align*}
    \lim\limits_{n\to \infty}\int_{\bR_+^{K^2+K}} f(\tau^n_{1:K}(\mvx)) \vP_{3,n}(\mvx)d\mvx &=    \lim\limits_{n\to \infty}\int_{\bR_+^{K^2+K}} f(\hat{\tau}^n_{1:K}(\mvy)) \vP_{3,n}(G_n(\mvy)) \fJ_n(\mvy) d\mvy\\
    &=\int_{\bR_+^{K^2+K}} f(\mvy) \prod_{j=0}^K \left[ \exp\left(-H(\sigma_K(\mvy_j))\right)\prod_{k=1}^K h(\sigma_k(\mvy_j))\right] d\mvy\\
    &=\E(f(\sE_{1:K})).
\end{align*} The last line follows from the expression for joint density of $(\sE_{0\leadsto 1},\sE_{1\leadsto 2},\dots,\sE_{k-1\leadsto k})$ in~\eqref{eqn:joint-density}.  This finishes the proof.\end{proof}

\subsection{Proof of local weak convergence in the mesoscopic regime: Theorem \ref{thm:lwc-meso}:}  
\textcolor{black}{The main idea of the proof is to do a} branching process approximation, to the breadth-first exploration of the fringe of a vertex in the process $\cT_n$. Before we prove local weak convergence, we argue that the degree of a vertex can roughly approximated by a Poisson(1) random variable. \textcolor{black}{Let us denote by $d_{TV}$ the total variation distance between two discrete distributions.}
\begin{lemma}[Poisson Approximation]\label{lem:poiss-approx} Let $D_{j,n}$ be the degree of vertex \textcolor{black}{$j$} at time $n$.
    For any $\epsilon > 1$, there exists $j_0$ such that for all $j_0 \leq j \leq n$,  \begin{align*}
        &d_{TV}\left(D_{j,n}, \text{Poisson}(1)\right) \\
        &\leq \epsilon \left[1-(1+\epsilon j^{\beta-1})^{-\beta}\right] + \epsilon j^{-\beta} + \max(\epsilon-1,j^{-\beta})+ \ind\set{N_j\geq n-j} \abs{\frac{n-j}{j^\beta}-1}
    \end{align*}where $N_j = \abs{\set{i\geq j:i-i^\beta \leq j}}.$
\end{lemma}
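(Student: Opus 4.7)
The plan is to realize the random variable in question as a sum of independent Bernoullis, Poissonize it via Le Cam's inequality, and compare the resulting Poisson to $\text{Poisson}(1)$ by controlling how close its mean is to one. Concretely, conditional on vertex $j$ being in the tree, the indicators $X_m = \ind\{\text{vertex }m+1\text{ attaches to }j\}$, $m\geq j$, are independent by the model's attachment rule, with $X_m \sim \text{Bern}(p_m)$ for $p_m = \ind\{m-m^\beta \leq j\}/(\lfloor m^\beta \rfloor + 1)$; only the first $M := \min(N_j, n-j)$ such indices give $p_m > 0$. Setting $\lambda = \sum_m p_m$, Le Cam's inequality combined with the elementary coupling bound $d_{TV}(\text{Poisson}(\lambda), \text{Poisson}(1)) \leq |\lambda - 1|$ produces
\[ d_{TV}\bigl(D_{j,n}, \text{Poisson}(1)\bigr) \;\leq\; \sum_m p_m^2 \;+\; |\lambda - 1|, \]
and since $p_m \leq j^{-\beta}$ and (by Step 2) $\lambda \leq \epsilon$, the first sum is at most $\epsilon j^{-\beta}$, which is the second summand of the target bound.

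It remains to control $|\lambda - 1|$. From the defining inequalities $N_j > (j+N_j)^\beta$ and $N_j - 1 \leq (j+N_j-1)^\beta$ one reads off $N_j > j^\beta$, and a first-order expansion of $(1+x)^\beta$ shows $N_j \leq \epsilon j^\beta$ once $j \geq j_0(\epsilon)$. Consequently every index $i \in \{j,\ldots,j+M-1\}$ obeys $j^\beta \leq i^\beta \leq (1+\epsilon j^{\beta-1})^\beta j^\beta$, which plugged into $p_i = 1/(\lfloor i^\beta\rfloor + 1)$ and summed yields, via a short Riemann-sum comparison,
\[ \bigl|\lambda - M/j^\beta\bigr| \;\leq\; \epsilon\bigl[1 - (1+\epsilon j^{\beta-1})^{-\beta}\bigr] + O(j^{-\beta}), \]
whose leading term matches the first summand and whose $O(j^{-\beta})$ slack will be absorbed into the $\max(\epsilon-1, j^{-\beta})$ summand below. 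The leftover $|M/j^\beta - 1|$ splits according to which of $N_j$ and $n-j$ realizes the minimum: if $M = N_j$, i.e.\ $N_j < n-j$, then $N_j/j^\beta \in [1,\epsilon]$ gives $|M/j^\beta - 1| \leq \epsilon - 1$; if $M = n-j$, i.e.\ $\ind\{N_j\geq n-j\} = 1$, then $|M/j^\beta - 1| = |(n-j)/j^\beta - 1|$. Uniformly packaging both cases as
\[ \bigl|M/j^\beta - 1\bigr| \;\leq\; \max(\epsilon - 1,\, j^{-\beta}) + \ind\{N_j \geq n-j\}\bigl|(n-j)/j^\beta - 1\bigr| \]
and assembling via the triangle inequality of Step 1 gives the full target bound.

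The main point of technical care is the Riemann-sum comparison: both the floor $\lfloor i^\beta \rfloor$ and the extra $+1$ in the denominator must be handled uniformly over $i \in [j, j+M-1]$, and the residual $O(j^{-\beta})$ slack must be shown to collapse cleanly into the existing $\epsilon j^{-\beta}$ or $\max(\epsilon - 1, j^{-\beta})$ summands rather than producing a fifth term. Calibrating a single threshold $j_0(\epsilon)$ for which $N_j \leq \epsilon j^\beta$, the bound $(1+\epsilon j^{\beta-1})^\beta j^\beta \geq i^\beta$, and the Le Cam estimate all hold simultaneously is fiddly but routine.
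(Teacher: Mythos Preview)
Your proposal is correct and follows essentially the same approach as the paper. The only organizational difference is that the paper first compares the heterogeneous Bernoulli sum to $\mathrm{Bin}(N_{j,n},j^{-\beta})$ (yielding the first summand), then uses the Poisson approximation of the Binomial (yielding the second), whereas you apply Le Cam directly to the heterogeneous sum (yielding the second summand) and recover the first summand when bounding $|\lambda - M/j^\beta|$; the remaining $|M/j^\beta-1|$ analysis is identical.
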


\textcolor{black}{ To explain the bound above, we call a vertex labeled $j$ a \emph{good vertex} at time $n$ if $N_j \leq n$. Note that $N_j$ is the maximal index of a vertex that can potentially attach to vertex $j$.  In other words, a \emph{good vertex } at time $n$ had seen all the vertices that it could potentially connect to by time $n$. Thus, for good vertices its degree distribution can be approximated well by Poisson distribution. The error made by Poisson approximation for the \emph{good vertices} is accounted by the first three terms, while the last term accounts for the error for \emph{bad vertices}  at time $n$.}
\begin{proof}
For any $j \leq i$, define $Y_{i,j} = \ind\set{\text{vertex }i\text{ connects to  vertex }j}$, we then have $Y_{i,j}\sim \text{Ber}\left(\frac{\ind\set{i-i^\beta \leq j}}{i^\beta}\right).$ Furthermore, for any $j \geq 1$, the collection of random variables $\set{Y_{i,j}:i\geq j}$ are independent. Let $N_j = |\set{i: i-i^\beta \leq j < i}|$ be the total number of vertices that have access to vertex $j$. Also, define $N_{j,n} = \min\set{N_j,(n-j)}$ to be the number of vertices upto time $n$ that have access to vertex $j$. The following lemma gives a bound on $N_j$, \textcolor{black}{and its proof is provided at the end of the section.}
\begin{lem}\label{lem:n_j-bound}
    We have \begin{enumeratei}
        \item $N_j \geq j^\beta-1$ for all $j\geq 1$.
        \item for any $\delta > 1,$ there exists $j_0\geq 1$ such that $N_j \leq \delta j^\beta$ for all $j\geq j_0$.
    \end{enumeratei}
\end{lem}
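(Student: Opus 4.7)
I will handle the two bounds separately via a direct analysis of the real function $f(x)=x-x^\beta$, using that $N_j$ counts integers $i>j$ with $f(i)\le j$. Since $f'(x)=1-\beta x^{\beta-1}$ is positive once $x>\beta^{1/(1-\beta)}$, the function $f$ is strictly increasing past some $\beta$-dependent threshold $x_0$. Consequently, for all sufficiently large $j$, the set $\{i>j:f(i)\le j\}$ is an interval of integers $(j,i^\star(j)]$, so $N_j=i^\star(j)-j$ with $i^\star(j)$ the largest integer satisfying $f(i^\star(j))\le j$. This reframing reduces the lemma to pinning down $i^\star(j)$.

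\textbf{Part (i).} The plan is to exhibit $\lfloor j^\beta\rfloor$ explicit witnesses. For each integer $k$ with $1\le k\le \lfloor j^\beta\rfloor$, set $i=j+k$; then $i>j$ and
\[
i^\beta=(j+k)^\beta\ge j^\beta\ge k,
\]
so $f(i)=i-i^\beta\le i-k=j$. Thus every such $i$ contributes to $N_j$, giving $N_j\ge \lfloor j^\beta\rfloor\ge j^\beta-1$. This works uniformly for all $j\ge 1$, with no large-$j$ caveat.

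\textbf{Part (ii).} Fix $\delta>1$ and choose $\delta'\in(1,\delta)$. The idea is to show that for $j$ large enough and any integer $k\ge \delta'j^\beta$, one has $f(j+k)>j$, which forces $i^\star(j)<j+\delta' j^\beta$ and hence $N_j<\delta'j^\beta\le \delta j^\beta$. The required inequality is $k>(j+k)^\beta$. Using $k\le (1+\delta')j^\beta$ for the regime where $f$ is still close to monotone borderline, write
\[
(j+k)^\beta=j^\beta\bigl(1+k/j\bigr)^\beta\le j^\beta\bigl(1+\delta' j^{\beta-1}+o(j^{\beta-1})\bigr)^\beta,
\]
and since $\beta<1$ we have $k/j\le (1+\delta')j^{\beta-1}\to 0$, whence $(j+k)^\beta=j^\beta(1+o(1))$ uniformly for $k$ in this range. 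Therefore
\[
k-(j+k)^\beta\ge \delta'j^\beta-j^\beta(1+o(1))=j^\beta\bigl(\delta'-1+o(1)\bigr),
\]
which is strictly positive for all sufficiently large $j$. Picking $j_0$ accordingly and invoking the monotonicity of $k\mapsto k-(j+k)^\beta$ for $j+k\ge x_0$ to propagate the inequality to all larger $k$ then completes the argument.

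\textbf{Anticipated obstacle.} Nothing deep is at stake, but the one point that needs care is ensuring that $N_j$ really is counted by an interval of integers rather than a union of scattered integers; this is exactly where monotonicity of $f$ on $[x_0,\infty)$ enters, and one must absorb the small-$j$ (below-$x_0$) behaviour either into the threshold $j_0$ for (ii) or check (i) by hand for the finitely many $j<x_0$. Otherwise the proof is a short quantitative computation based on the expansion $(j+k)^\beta=j^\beta+O(j^{2\beta-1})$ valid when $k=O(j^\beta)$, which is the standard tool for these threshold asymptotics.
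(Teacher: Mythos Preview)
Your proof is correct and follows essentially the same approach as the paper: for (i) you exhibit the $\lfloor j^\beta\rfloor$ witnesses $i=j+1,\dots,j+\lfloor j^\beta\rfloor$ directly (the paper checks only the endpoint $i=j+\lfloor j^\beta\rfloor$ and leaves the rest implicit), and for (ii) you verify that $i=j+\delta' j^\beta$ fails the defining inequality for large $j$ and then propagate via monotonicity of $x\mapsto x-x^\beta$, which is exactly the paper's contrapositive computation $\delta\le (1+\delta j^{\beta-1})^\beta\to 1$. Your write-up is more explicit about the interval structure and the role of monotonicity than the paper's terse two-line argument; the auxiliary $\delta'\in(1,\delta)$ is harmless but not needed, since checking a single boundary value already suffices.
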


Using the above lemma and triangle inequality, for sufficiently large $j$, we have \begin{align}\label{eqn:tv-triangle}
    d_{TV}\left(D_{j,n}, Bin\left(N_{j,n},\frac{1}{j^{\beta}}\right)\right) &\leq \sum_{i=j}^{n} \ind\set{i-i^\beta \leq j}\left[\frac{1}{j^\beta}-\frac{1}{i^\beta}\right]\nonumber\\
    &\leq N_{j,n} \left[\frac{1}{j^\beta} - \frac{1}{(j+\epsilon j^\beta)^\beta}\right] \leq \epsilon \left[1-(1+\epsilon j^{\beta-1})^{-\beta}\right].
\end{align} The last two lines follow from (ii) of Lemma \ref{lem:n_j-bound}. Next, we approximate the binomial distribution in \eqref{eqn:tv-triangle} with a Poisson(1) distribution. By triangle inequality, we have\begin{align}\label{eqn:bin-poiss-tv}
&d_{TV}\left(\text{Bin}\left(N_{j,n},\frac{1}{j^{\beta}}\right), \text{Poisson}(1)\right) \nonumber\\
    &\leq d_{TV}\left(\text{Bin}\left(N_{j,n},\frac{1}{j^{\beta}}\right), \text{Poisson}\left(\frac{N_{j,n}}{j^\beta})\right)\right) +  d_{TV}\left( \text{Poisson}\left(\frac{N_{j,n}}{j^\beta}\right), \text{Poisson}(1)\right)\nonumber\\
    &\leq \frac{N_{j,n}}{j^{2\beta}} + \abs{\frac{N_{j,n}}{j^\beta}-1}.
\end{align} In the last line, the first term is bounded is using Poisson approximation of Binomial. The second term is bounded using total variation bound between Poisson distributions. Finally, note that $N_{j,n} \leq \epsilon j^{\beta}$ and $$\abs{\frac{N_{j,n}}{j^\beta}-1} \leq \abs{\frac{N_j}{j^\beta}-1}+ \ind\set{N_j\geq n-j} \abs{\frac{n-j}{j^\beta}-1} \leq \max(\epsilon - 1,j^{-\beta}) + \ind\set{N_j\geq n-j} \abs{\frac{n-j}{j^\beta}-1}$$

Combining the above observations with \eqref{eqn:tv-triangle} and \eqref{eqn:bin-poiss-tv}, we have \begin{align*}
    &d_{TV}\left(D_{j,n}, \text{Poisson}(1)\right)\\
    &\leq \epsilon \left[1-(1+\epsilon j^{\beta-1})^{-\beta}\right] + \epsilon j^{-\beta} + \max(\epsilon-1,j^{-\beta})+ \ind\set{N_j\geq n-j} \abs{\frac{n-j}{j^\beta}-1}.
\end{align*} 
\end{proof}

\begin{proof}[Proof of Theorem \ref{thm:lwc-meso}]
    Fix a tree $\TT \in \bbT$ with $\abs{\TT} = k$. Let $U\sim Unif(0,1)$ be independent of the process $\set{\cT_n:n\geq 1}$. We first show that \begin{equation*}
    \lim_{n\to \infty}\pr\left(F( \lceil n U\rceil, \cT_n) =  \cT\right) = \lim_{n\to \infty} \int_{0}^1 \pr\left(F( \lfloor n u\rfloor, \cT_n) =  \cT\right) du \to \pr\left(\fpm_{\Poi, 1} = \cT \right).
\end{equation*} The proof of local weak convergence follows by appealing to Lemma \ref{thm:aldous-efr-pfr}.

To this extent, fix $u \in [0,1]$. Consider the breadth-first exploration of the fringe of vertex $\lfloor n u\rfloor$, where we stop the exploration if the fringe/subtree rooted at $\lfloor n u\rfloor$ differs from $\cT$ or if the subtree $= \cT$. \textcolor{black}{Note that we only encounter at most $k$ vertices during the exploration, where recall $|\cT|=k$. 
}
\textcolor{black}{We first argue that the $k$ vertices encountered in the exploration are all \emph{ good vertices} at time $n$. This helps in leveraging the Poisson approximation for these $k$ vertices.} Note by letting $\delta = 2$ in (ii) of Lemma \ref{lem:n_j-bound}, there exists $N \in \bN$ such that index of any vertex that attaches to a vertex $j \geq  N$ is bounded at most by $j+2j^\beta$. Therefore, for $n \geq N/u$, we have that index of the vertex that attaches to vertex $\lfloor nu \rfloor$ is bounded by $nu + 2(nu)^\beta$. \textcolor{black}{Repeating this argument, we see that indices of the first $k$ vertices in the fringe is bounded by $nu + 2(nu)^\beta + 2(nu+2(nu)^\beta)^\beta + \dots (k-\text{terms}) = nu + O(n^\beta)$. Therefore, by Lemma \ref{lem:n_j-bound}, we have all $k$ vertices during the exploration are \emph{good vertices} at time $n$.} Thus, by Lemma \ref{lem:poiss-approx}, for any $\epsilon \geq 1$, there exists $\Tilde{N} \geq N/u$ such that for all $1\leq j\leq k$ \begin{align*}
    d_{TV}(D_{l_j,n},\text{Poisson}(1)) &\leq \epsilon \left[1-(1+\epsilon l_j^{\beta-1})^{-\beta}\right] + \epsilon l_j^{-\beta}+ \max(\epsilon-1,l_j^{-\beta}) = \epsilon - 1+ O(1)
\end{align*}for all  $n\geq \Tilde{N}$. Therefore, we have \begin{align*}
    \limsup_{n\to \infty}\left|\pr\left(F( \lfloor n u\rfloor, \cT_n) =  \cT\right) - \pr(\fpm_{\Poi, 1} = \cT)\right| \leq k(\epsilon - 1) \text{ for all }\epsilon >1.
\end{align*} \textcolor{black}{We refer the reader to proof of Lemma 9.6 in \cite{bollobas-irg} for more details about the branching process approximation.} This completes the proof.
\end{proof}

We end this section with the proof of Lemma \ref{lem:n_j-bound}.
\begin{proof}[Proof of Lemma \ref{lem:n_j-bound}]

    (i) For any $j \geq 1$, we have $\lfloor j^\beta\rfloor \leq (j+\lfloor j^\beta\rfloor)^\beta.$ Thus, $(j+\lfloor j^\beta\rfloor) - (j+\lfloor j^\beta\rfloor)^\beta \leq j.$ Therefore, $N_{j,n} \geq \lfloor j^\beta\rfloor \geq j^\beta -1.$

    (ii) If $(j+\delta j^\beta) - (j+\delta j^\beta)^\beta \leq j$ then $\delta \leq  (1+\epsilon j^{1-\beta})^\beta.$ Since, the RHS of the inequality converges to 1, the result follows. 

\end{proof}

\section{Proofs of global results}

\subsection{Analysis of the height in the mesoscopic regime}

In this section, we derive asymptotics for the height of the trees $\set{\cT_n:n\geq 1}$ in the mesoscopic regime. \textcolor{black}{Recall in this regime we have $j(n)=n-n^{\beta}$ for a fixed parameter $\beta \in (0,1)$.} \textcolor{black}{The proof involves the following three step recipe:
\begin{itemize}
    \item[(i)] Deriving asymptotics of the Markov process $\set{L_n(k):k\geq 1}$ which tracks the index of $k^{\text{th}}$ ancestor of vertex $n$.
    \item[(ii)] Using the result in step-(i) to prove asymptotics of the distance $d(1,n)$ between the root $1$ and the vertex $n$.
    \item[(iii)] Arguing that the height of the tree is achieved by a vertex with index in $[(1-\delta) n,n]$ for arbitrary $\delta>0$, with high probability, so that $d(1,n)$ itself is also the height of the tree up to negligible errors. 
\end{itemize}
}

\textbf{Step} (i): For $1\le k\le d(1,n)$, let $L_n(k)$ be the index of the $k^{\text{th}}$ ancestor of vertex $n$. Define $L_n(0)=n$ and $L_n(k)=0$ if $k > d(1,n)$. Note that $\set{L_n(k):k\geq 0}$ is a Markov process on $\set{0,1,2,\dots,n}$ with $0$ being an absorbing state. When $L_n(k) = l > 0$, $L_n(k+1)$ takes values in $\set{l-1,l-2,\dots,l-\lfloor l^\beta \rfloor}$ with equal probability. 

\textcolor{black}{The following proposition proves that the natural LLN limit for the process $L_n(\cdot)$ when scaled appropriately is given by the function $f_{\beta}:\bR_+ \to\bR_+$ defined as 
\begin{align*}
&f_{\beta}(t)= \begin{cases} & \left(1-\frac{1-\beta}{2}t \right)^{1/(1-\beta)},\;\;\text{if}\;\;t\in [0,\frac{2}{1-\beta}];\\&0,\hspace{83 pt}\;\;\text{if}\;\;t>\frac{2}{1-\beta}.    
\end{cases}    
\end{align*} }

\begin{prop}\label{prop:concentration_meso_n}
    Fix $\beta \in (0,1)$, and let $T_\beta = 2/(1-\beta)$. For any $\epsilon>0$ and $T < T_\beta$, there exists constants $C>0$ and $\xi=\xi(\epsilon)>0$ such that
    \begin{align*}
        \prob\left( \sup_{t \leq T}\left|\frac{1}{n}L_n(\lfloor tn^{1-\beta} \rfloor)-f_{\beta}(t)\right|>n^{\frac{-1+\beta}{2}+\epsilon}\right)\leq \exp(-Cn^{\xi}).
    \end{align*}
\end{prop}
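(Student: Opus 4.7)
The plan is to combine a martingale decomposition of $L_n$ with a discrete Gronwall-type stability estimate. The deterministic trajectory $\phi_k := n f_\beta(k n^{\beta-1})$ is the natural comparison because $f_\beta$ solves the ODE $y'(t) = -y(t)^\beta/2$ with $y(0)=1$, which matches the mean-field drift of the dynamics: given $L_n(j-1)=l$, the increment $\Delta_j := L_n(j)-L_n(j-1)$ is uniform on $\{-1,\dots,-\lfloor l^\beta \rfloor\}$, so $\E[\Delta_j \mid \cF_{j-1}] = -(1+\lfloor L_n(j-1)^\beta \rfloor)/2$ with $|\Delta_j| \leq L_n(j-1)^\beta \leq n^\beta$.

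First, I would introduce the martingale
\begin{equation*}
M_n(k) := L_n(k) - n + \sum_{j=1}^k \frac{1 + \lfloor L_n(j-1)^\beta \rfloor}{2},
\end{equation*}
whose increments are bounded in absolute value by $2 n^\beta$. Azuma--Hoeffding with a union bound (or Doob's maximal inequality) over $k \leq T n^{1-\beta}$ then yields
\begin{equation*}
\prob\Big(\max_{k \leq T n^{1-\beta}} |M_n(k)| > n^{(1+\beta)/2 + \epsilon/2}\Big) \leq \exp(-C n^\epsilon)
\end{equation*}
for a suitable $C>0$, controlling the stochastic fluctuation.

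Next, I would compare the compensator to $\phi_k$. A Taylor expansion using the ODE gives $\phi_{k+1} - \phi_k = -\phi_k^\beta/2 + O(n^{2\beta-1})$ uniformly for $k \leq T n^{1-\beta}$ (where $\phi_k$ is bounded above by $n$ and below by $n f_\beta(T) > 0$). The error $e_k := L_n(k) - \phi_k$ therefore satisfies
\begin{equation*}
e_k = M_n(k) - \tfrac{1}{2}\sum_{j=1}^{k}\big(\lfloor L_n(j-1)^\beta\rfloor - \phi_{j-1}^\beta\big) + O(n^\beta),
\end{equation*}
where the $O(n^\beta)$ term aggregates the per-step $O(n^{2\beta-1})$ Taylor errors over $O(n^{1-\beta})$ steps. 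On the event that $|e_{j-1}| \leq \phi_{j-1}/2$ for all $j \leq k$, the mean value theorem gives $|L_n(j-1)^\beta - \phi_{j-1}^\beta| \leq C n^{\beta-1}|e_{j-1}|$ (using $\phi_{j-1} \asymp n$), and the discrete Gronwall inequality yields
\begin{equation*}
\max_{k \leq T n^{1-\beta}} |e_k| \leq e^{CT/2}\Big(\max_{j \leq T n^{1-\beta}} |M_n(j)| + C' n^\beta\Big) = O\big(n^{(1+\beta)/2 + \epsilon/2}\big),
\end{equation*}
since $\beta < (1+\beta)/2$ for $\beta \in (0,1)$, so the martingale term dominates. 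Dividing by $n$ gives the claimed bound $n^{(\beta-1)/2 + \epsilon}$.

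The main obstacle is the circular dependence in the Gronwall step: the linearization $|L_n(j-1)^\beta - \phi_{j-1}^\beta| \leq C n^{\beta-1}|e_{j-1}|$ requires $L_n$ to already track $\phi$ closely, which is what we are trying to prove. I would resolve this via a bootstrap stopping time $\tau := \inf\{k : |e_k| > n f_\beta(T)/4\}$, apply the Gronwall estimate on $[0, \tau \wedge T n^{1-\beta}]$ to conclude $|e_{\tau \wedge T n^{1-\beta}}| = O(n^{(1+\beta)/2 + \epsilon/2}) \ll n$ on the high-probability event where $\max_j|M_n(j)|$ is controlled, and thereby force $\tau > T n^{1-\beta}$ with probability at least $1 - \exp(-C n^\epsilon)$, closing the argument and giving the stated exponent $\xi = \epsilon$.
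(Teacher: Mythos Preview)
Your approach is correct and shares the paper's core strategy---martingale decomposition plus a Gronwall-type stability estimate against the ODE trajectory $f_\beta$---but the implementations differ. The paper embeds $L_n$ into a continuous-time Markov chain $X_n(t)$ driven by independent Poisson processes, obtains a semi-martingale decomposition $X_n=A_n+M_n$, applies the continuous Gronwall inequality to $|\bar X_n(t)-f_\beta(t)|$, and controls $\sup_t|\bar M_n(t)|$ via an exponential martingale inequality for continuous-time martingales with bounded jumps (citing van de Geer). It then transfers the bound back to discrete time by showing that the jump-counting process $Z_n(T)$ concentrates around $n^{1-\beta}T$. Your route stays entirely in discrete time, replacing the Poisson embedding by a direct Doob decomposition of $L_n(k)$, the van de Geer inequality by Azuma--Hoeffding, and continuous Gronwall by its discrete analogue; this is more elementary and avoids the final continuous-to-discrete transfer step. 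A further plus: your explicit bootstrap via the stopping time $\tau=\inf\{k:|e_k|>nf_\beta(T)/4\}$ cleanly handles the circularity in linearizing $x\mapsto x^\beta$ (which requires $L_n$ bounded away from $0$), a point the paper's proof glosses over when it asserts $|\bar X_n^\beta-f_\beta^\beta|\le M|\bar X_n-f_\beta|$ without first localizing.

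One small bookkeeping omission: your $O(n^\beta)$ remainder only tracks the Taylor error $O(n^{2\beta-1})$ per step; you also incur an $O(1)$ per step from the ``$+1$'' in the compensator and from replacing $\lfloor L_n(j-1)^\beta\rfloor$ by $L_n(j-1)^\beta$ before applying the mean value theorem. Summed over $O(n^{1-\beta})$ steps this gives an additional $O(n^{1-\beta})$ contribution to $e_k$. For $\beta\ge 1/3$ this is still dominated by $n^{(1+\beta)/2+\epsilon/2}$, but for $\beta<1/3$ it is not, and the stated rate $n^{(\beta-1)/2+\epsilon}$ would only hold for $\epsilon$ above a $\beta$-dependent threshold. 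The paper's proof has the identical issue (the $T/n^\beta$ term in their Gronwall bound plays the same role), so this is not a defect of your approach relative to theirs.
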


\textcolor{black}{Embedding the discrete chain $L_n(k)$ into a continuous-time process $X_n(t)$ allows us to represent the jumps using independent Poisson processes. This makes it easier to prove the concentration inequality using the associated ODE for $f_\beta(t)$, the function $f_\beta(\cdot)$, and properties of Poisson processes. To this extent, consider a continuous time Markov chain denoted by $\set{X_n(t):t\geq 0}$ on state space $\set{0,1,2,\dots,n}$. The waiting time at any state has exponential distribution with rate $(n^{1-\beta})$. From state $l$, the chain moves to any of the states $\set{l-1,l-2,\dots,l-\lfloor l^\beta \rfloor}$ with equal probability. Therefore, the embedded discrete time chain of $\set{X_n(t):t\geq 0}$ has the same law as the process $\set{L_n(k):k\geq 1}.$ Also, the state $0$ is absorbing as noted before.}

We now express the process $X_n(t)$ in terms of time changed Poisson processes. Let $Y_1,Y_2,\dots$ be i.i.d.\ rate $1$ Poisson processes. \textcolor{black}{By Chapter 6 of \cite{kurtz-ethier}, we have \begin{align}\label{eqn:defn-x_n(t)}
    X_n(t) = n - \sum_{i=1}^n i Y_i\left(n^{1-\beta}\int_{0}^t \frac{\ind\set{i\leq \lfloor X_n(s)^\beta\rfloor }}{\lfloor X_n(s)^\beta\rfloor}ds\right).
\end{align}}
    Let $0 = \tau_0 \leq \tau_1 \leq \tau_2 \leq \dots$ be the event times of the process \begin{align}\label{eqn:def-z(t)}
        Z(t) = \sum_{i=1}^n Y_{i}\left(n^{1-\beta}\int_{0}^t \frac{\ind\set{i\leq \lfloor X_n(s)^\beta \rfloor}}{\lfloor X_n(s)^\beta\rfloor}\right).
    \end{align} Let $n_c = \sup_{t}Z(t)$ be the number of transitions in continuous time process $X_n(\cdot)$ before it hits $0$. Similarly, for the discrete time process, define $n_d = \inf\set{k: L_n(k) = 0}$. \textcolor{black}{From the definition of the process $X_n(\cdot)$,  we have $n_d \stackrel{d}{=} n_c <\infty$.} Furthermore the following equality \textcolor{black}{in distribution} holds, \begin{align}\label{eqn:dist-equl-cont-disc}
       \set{X_n(\tau_i):0\leq i\leq n_c} \stackrel{d}{=} \set{L_n(i):0\leq i\leq n_d}. 
    \end{align}
\textcolor{black}{We now have all the ingredients to provide the proof of Proposition \ref{prop:concentration_meso_n}.}

\begin{proof}
   We first prove the concentration for the continuous-time version process $\set{X_n(t):t\geq 0}$, and then leveraging \eqref{eqn:dist-equl-cont-disc} we conclude the result for discrete time process $\set{L_n(k):k\geq 0}.$

   We subtract the compensators of the Poisson processes in \eqref{eqn:defn-x_n(t)}. This gives us a semi-martingale decomposition of the process $X_n(t) = A_n(t) + M_n(t)$, where $A_n(t)$ is a predictable process and $M_n(t)$ is a martingale, both with respect to the natural filtration $\cF_n(t)=\sigma(\set{X_n(s),Y_1(s),\dots,Y_n(s):s\leq t})$, defined as\begin{align*}
        A_n(t) &= n- n^{1-\beta} \int_{0}^t \frac{\sum_{i=1}^n i\ind\set{i\leq \lfloor X_n^\beta(s)\rfloor}}{\lfloor X_n^\beta(s)\rfloor} ds = n-n^{1-\beta}\int_{0}^t \frac{\lfloor X_n^\beta(s)\rfloor + 1}{2} ds\\
        M_n(t) &= -\sum_{i=1}^n iN_i\left(n^{1-\beta}\int_{0}^t \frac{\ind\set{i\leq \lfloor X_n^\beta(s)\rfloor }}{\lfloor X_n^\beta(s)\rfloor}ds\right)
    \end{align*} where $N_i(s) = Y_i(s) - s$ for $1\leq i\leq n$. Let $\bar{X}_n(t) = X_n(t)/n$. Note that for all $t\geq 0$, we have $f_\beta(t) = 1 - \int_{0}^t (f_{\beta}(s))^\beta ds$. Therefore, we have \begin{align*}
        \bar{X}_n(t) - f_{\beta}(t) = -\frac{1}{2}\int_{0}^t \left[\bar{X}_n^\beta(s) - f_{\beta}^\beta(s)\right]ds + \int_{0}^t \frac{1-\{X_n^\beta(s)\}}{2n^\beta}ds + \bar{M}_n(t),
    \end{align*} where $\bar{M}_n(t) = M_n(t)/n$ captures the fluctuations, and $\set{a} = a-\lfloor a \rfloor $ is the fractional part of $a$. Therefore, we have \begin{align}\label{eqn:2929}
        \abs{\bar{X}_n(t) - f_{\beta}(t)} &\leq \int_0^t \abs{\bar{X}_n^\beta(s) - f_{\beta}^\beta(s)} ds + \frac{t}{n^{\beta}} + |\bar{M}_n(t)|.
    \end{align} Furthermore, for all $t \leq T < T_\beta$, we have $f_{\beta}(t) \geq f_{\beta}(T) > 0$. Therefore, there exists a constant $M$ such that $\abs{X_{n}^\beta(s)-f_{\beta}^\beta(s)} \leq M \abs{X_{n}(s)-f_{\beta}(s)}$. Thus, by \eqref{eqn:2929}, we have \begin{align*}
        \abs{\bar{X}_n(t)-f_\beta(t)} \leq M \int_0^t \abs{\bar{X}_n(s) - f_{\beta}(s)} ds + \frac{t}{n^{\beta}} + |\bar{M}_n(t)|.
    \end{align*}
    Therefore, for any $T \geq 0$, by Gronwall's inequality \textcolor{black}{(see Section 5.2 in \cite{oksendall})} we have \begin{align}\label{eq:gronwall-bound}
        \sup_{t\leq T}  \abs{\bar{X}_n(t) - f_{\beta}(t)} \leq \left[\frac{T}{n^{\beta}} + \sup_{t\leq T}|\bar{M}_n(t)|\right] e^{M T}.
    \end{align} Note that $\bar{M}_{n}(\cdot)$ is a continuous time martingale with jumps at most of size $n^{1-\beta}.$ Furthermore, the quadratic variation of the process $M_n(t)$ is given by\begin{align*}
        \langle \bar{M}_n(t)\rangle &= \frac{n^{1-\beta}}{n^2} \int_0^t \frac{\sum_{i=1}^n i^2\ind\set{i\leq \lfloor X_n^\beta(s)\rfloor}}{\lfloor X_n^\beta(s)\rfloor} ds \leq n^{-(1-\beta)} \int_{0}^t \bar{X}_n(s)^{2\beta}ds \leq n^{-(1-\beta)} T
    \end{align*} for all $t\leq T$. By Lemma 2.1 in \cite{vandeGeer} , we have \begin{align*}
        \pr\left(\sup_{t\leq T}|\bar{M}_n(t)| \geq n^{-\frac{(1-\beta)}{2} +\epsilon }\right) &\leq 2 \exp\left(-\frac{n^{-(1-\beta) + 2\epsilon}}{n^{-(1-\beta)/2  + \epsilon} n^{-(1-\beta)} + T n^{-(1-\beta)}} \right)\\
        &\leq \exp\left(-n^{2\epsilon}/T\right).
    \end{align*} Combining the above bound with \eqref{eq:gronwall-bound}, we have \begin{align}\label{eqn:cont-time-bound}
        \pr\left(\sup_{t\leq T} \abs{\bar{X}_n(t) - f_{\beta}(t)} \geq n^{-(1-\beta)/2 +\epsilon}\right) \leq \exp\left(-C n^{2\epsilon}\right)
    \end{align} for some constant $C$ depending on $T$ and $M$. \textcolor{black}{This also proves that we have  $\pr(\tau_{n,c} \leq T_\beta- T+\delta) \leq \exp(-Cn^{2\epsilon}).$ Let $\tilde{Z}_n(T)$ is a Poisson$(n^{1-\beta} T)$ random variable. The last line follows from the observation that $Z_n(\cdot)$ is has jumps according to Poisson $n^{1-\beta}$ process before time $\tau_{n,c}$. Therefore, $Z_n(T) \stackrel{d}{=} \tilde{Z}_n(T)$ when $\tau_{n,c} > T$, and hence $$\pr\left(\abs{Z_n(T)-n^{1-\beta}T}\geq n^{\frac{(1-\beta)}{2}+\epsilon}\right) 
    \leq \pr(\tau_{n,c} \leq T_\beta- T) + \pr\left(\abs{\tilde{Z}_n(T) -n^{1-\beta}T} \geq  n^{\frac{(1-\beta)}{2}+\epsilon}\right) $$ Finally, using large deviation bounds of Poisson random variables, we have $$\pr\left(\abs{Z_n(T)-n^{1-\beta}T}\geq n^{\frac{(1-\beta)}{2}+\epsilon}\right) \leq \exp(-Cn^{2\epsilon}) + \exp\left(-C'n\right)$$ for some constant $C'.$ Using \eqref{eqn:dist-equl-cont-disc} and the above proves the result.} \end{proof}

The following corollary is a \textcolor{black}{straightforward} consequence of \textcolor{black}{Proposition} \ref{prop:concentration_meso_n}. This will be the main tool used in Step(ii).
\begin{cor}
    \label{cor:concentration_meso}
    Fix $\beta \in (0,1)$. For any $\epsilon,\delta>0$ and $T < T_\beta = 2(1-\beta)^{-1}$ there exist constants $C=C(\delta,T)>0$ and $\xi=\xi(\epsilon)>0$ such that
    \begin{align*}
        \prob\left(\exists\;\;v \in [\delta n, n]: \sup_{t\leq T }\left|\frac{1}{v}L_v(\lfloor tv^{1-\beta} \rfloor)-f_{\beta}(t)\right|>v^{\frac{-1+\beta}{2}+\epsilon}\right)\leq \exp(-Cn^{\xi}).
    \end{align*}
\end{cor}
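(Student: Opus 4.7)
The plan is a straightforward union bound over the integer values of $v$ in $[\delta n, n]$. First, I would observe that for each fixed $v \le n$, the ancestral index process $L_v(\cdot)$ has the same distribution as the Markov chain studied in Proposition \ref{prop:concentration_meso_n} with parameter $v$ in place of $n$. This is because the parent of any vertex $v' \le v$ is chosen uniformly from $\{v' - \lfloor (v')^{\beta}\rfloor, \dots, v'-1\}$ at the time of $v'$'s birth, independently of later arrivals; in particular the restriction to ancestors of $v$ has the same law whether we stop the network growth at time $v$ or at time $n \ge v$. Thus Proposition \ref{prop:concentration_meso_n} (applied with $n$ replaced by $v$) yields
\begin{align*}
\prob\left(\sup_{t\leq T}\left|\frac{1}{v}L_v(\lfloor tv^{1-\beta}\rfloor) - f_\beta(t)\right| > v^{\frac{-1+\beta}{2}+\epsilon}\right) \le \exp(-C_0\, v^{\xi}),
\end{align*}
with constants $C_0 = C_0(T,\epsilon) > 0$ and $\xi = \xi(\epsilon) > 0$ that do not depend on $v$ (they come from the proof of Proposition \ref{prop:concentration_meso_n}, which is uniform in the scale parameter as long as $T < T_\beta$ is fixed).

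Next, since $v \ge \delta n$ and $\xi > 0$, each term in the above display is bounded by $\exp(-C_0 \delta^{\xi} n^{\xi})$, uniformly in $v \in [\delta n, n]$. A union bound over the at most $n$ admissible integer values of $v$ then gives
\begin{align*}
\prob\left(\exists\, v \in [\delta n, n]:\ \sup_{t \le T}\left|\frac{1}{v}L_v(\lfloor tv^{1-\beta}\rfloor) - f_\beta(t)\right| > v^{\frac{-1+\beta}{2}+\epsilon}\right) \le n\,\exp(-C_0 \delta^{\xi} n^{\xi}).
\end{align*}
The polynomial prefactor $n$ is harmless against the stretched-exponential tail: writing $n = \exp(\log n)$ and comparing exponents, we obtain a bound of the form $\exp(-C n^{\xi})$ for a suitable $C = C(\delta, T) > 0$ and all $n$ sufficiently large, and the statement for all $n$ then follows by shrinking $C$ if necessary (so the bound is trivially $\le 1$ on the finite initial range).

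There is essentially no obstacle here beyond the bookkeeping already indicated. The only points requiring care are (i) verifying that the constant $C_0$ in Proposition \ref{prop:concentration_meso_n} really is uniform in the starting scale (which one reads off from the proof: the Gronwall constant depends on $T$ via a lower bound on $f_\beta$, and the martingale tail bound via Lemma 2.1 of \cite{vandeGeer} also yields an exponent that depends on $T$ and $\epsilon$ only), and (ii) the distributional identification of $L_v(\cdot)$ with the process in the proposition, which is immediate from the time-ordered nature of the attachment rule.
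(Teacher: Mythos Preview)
Your proposal is correct and follows essentially the same approach as the paper: a union bound over $v\in[\delta n,n]$ followed by an application of Proposition~\ref{prop:concentration_meso_n} with $n$ replaced by $v$, and then absorbing the polynomial prefactor into the stretched-exponential tail. You are in fact more explicit than the paper about the distributional identification of $L_v(\cdot)$ and the uniformity of the constants, but the argument is the same.
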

\begin{proof}
\textcolor{black}{
    By a union bound, the LHS above is at most
    \begin{align*}
        \sum_{v = \delta n}^n\prob \left(\sup_{t\leq T }\left|\frac{1}{v}L_v(\lfloor tv^{1-\beta} \rfloor)-f_{\beta}(t)\right|>v^{\frac{-1+\beta}{2}+\epsilon} \right)
    \end{align*}
    which using Proposition \ref{prop:concentration_meso_n} is at most $\sum_{v=\delta n}^n\exp \left(-Cn^{\xi} \right)\leq \exp\left(-Cn^{\xi/2} \right)$.
    }
\end{proof}

\textbf{Step} (ii): We now use the Corollary \ref{cor:concentration_meso} to \textcolor{black}{complete Step (ii), i.e.,} prove convergence after scaling of the distance \textcolor{black}{$d(1,n)$} from $1$ to $n$:
\begin{prop}\label{prop:dist_limit}
For $\beta \in (0,1)$, it holds that 
\[n^{-(1-\beta)} d(1,n)\convp \frac{2}{1-\beta}. \]
\end{prop}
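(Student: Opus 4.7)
The plan is to establish matching upper and lower bounds on $d(1,n)/n^{1-\beta}$, exploiting the observation that $d(1,n)$ equals the first time the Markov chain $L_n(\cdot)$ hits the root vertex $1$. Both directions will be derived from Proposition~\ref{prop:concentration_meso_n}, but only the upper bound requires iterating the concentration estimate via the Markov property of $L_n$.

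For the lower bound, fix $\eta > 0$ small and set $T := T_\beta - \eta \in (0, T_\beta)$. Applying Proposition~\ref{prop:concentration_meso_n} at this $T$, on an event of probability at least $1 - \exp(-Cn^\xi)$ one has
\[ L_n(\lfloor T n^{1-\beta}\rfloor) \geq n\bigl(f_\beta(T) - n^{-(1-\beta)/2+\epsilon}\bigr),\]
which strictly exceeds $1$ for $n$ large, since $f_\beta(T) = ((1-\beta)\eta/2)^{1/(1-\beta)}$ is a strictly positive constant. Thus $L_n$ has not yet been absorbed at the root by step $\lfloor Tn^{1-\beta}\rfloor$, giving $d(1,n) \geq (T_\beta - \eta) n^{1-\beta}$ with high probability; letting $\eta \downarrow 0$ then yields $\liminf_{n\to\infty} d(1,n)/n^{1-\beta} \geq T_\beta$ in probability.

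For the upper bound, I will iterate Proposition~\ref{prop:concentration_meso_n} using the Markov property: conditional on $L_n(k_0) = v$, the shifted process $(L_n(k_0+k))_{k\geq 0}$ is distributed as $(L_v(k))_{k\geq 0}$. Fix $\eta > 0$ and write $T = T_\beta - \eta$, $r := f_\beta(T) \in (0,1)$, and $\rho := r^{1-\beta} = (1-\beta)\eta/2 \in (0,1)$. A first application of Proposition~\ref{prop:concentration_meso_n} yields $V_1 := L_n(\lfloor T n^{1-\beta}\rfloor) \leq nr(1+o(1))$ on a high-probability event; reapplying the proposition to $L_{V_1}$, and so on, after $J$ rounds the current value will satisfy $V_J \leq nr^J(1+o(1))$, at a cumulative step cost bounded by
\[T\sum_{j=0}^{J-1} V_j^{1-\beta} \;\leq\; T n^{1-\beta}\sum_{j=0}^{J-1}\rho^j\,(1+o(1)) \;\leq\; \frac{T n^{1-\beta}}{1-\rho}(1+o(1)).\]
Fixing $\alpha \in (0, 1-\beta)$ and choosing $J = \Theta(\log n)$ so that $V_J \leq n^\alpha$, the trivial observation that $L_n$ decreases by at least $1$ at each step shows that at most $n^\alpha = o(n^{1-\beta})$ additional steps are needed from $V_J$ to reach the root. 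Invoking the key algebraic identity $T_\beta(1-\beta)/2 = 1$,
\[\frac{T}{1-\rho} \;=\; \frac{T_\beta - \eta}{1-(1-\beta)\eta/2} \;=\; T_\beta + O(\eta^2),\]
and letting $\eta \downarrow 0$ at the end closes the argument.

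The main obstacle will be controlling the cumulative errors across the $\Theta(\log n)$ iterations. A union bound over the concentration events succeeds because the failure probability at each invocation of Proposition~\ref{prop:concentration_meso_n} on a chain of size $v$ is $\exp(-Cv^\xi)$, and the smallest starting value at which it is invoked is $\geq n^\alpha$, so the total failure probability is bounded by $(\log n)\exp(-Cn^{\alpha\xi}) \to 0$. The multiplicative concentration error at iteration $j$ contributes a factor $1 + O(V_j^{-(1-\beta)/2+\epsilon})$; since the $V_j$ decrease geometrically and the smallest is $\geq n^\alpha$, this sum telescopes geometrically and is dominated by its last term, so choosing $\epsilon < (1-\beta)/2$ ensures the product $\prod_{j<J}(1+o(1))$ is itself $1+o(1)$. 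Finally, the delicate cancellation in $(T_\beta - \eta)/(1-(1-\beta)\eta/2) = T_\beta + O(\eta^2)$ is exactly what makes the iteration tight enough to recover the sharp constant $T_\beta$.
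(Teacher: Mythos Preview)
Your proposal is correct and follows essentially the same strategy as the paper: both iterate Proposition~\ref{prop:concentration_meso_n} via the Markov property of $L_n$, shrinking the current label by a fixed factor $r=f_\beta(T)$ at each round, summing the resulting geometric series of step counts, and closing with the algebraic identity $T/(1-r^{1-\beta})=T_\beta$ (which in fact holds \emph{exactly}, not just up to $O(\eta^2)$, since $(1-\beta)/2=1/T_\beta$). The paper's $\alpha,t_\alpha,g_L,g_R$ are your $r,T$ and the upper/lower envelope maps; the error analysis and union bound over $\Theta(\log n)$ rounds are the same.

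Two small differences worth noting. First, your lower bound is cleaner: a single application of Proposition~\ref{prop:concentration_meso_n} at $T=T_\beta-\eta$ suffices, whereas the paper runs the full iteration for both directions. Second, for the upper-bound iteration you should be explicit that the \emph{lower} concentration bound $V_j\geq V_{j-1}(r-V_{j-1}^{-(1-\beta)/2+\epsilon})$ is also used, so that on the good event each $V_j$ stays above $n^\alpha$ and the union bound $\sum_j\exp(-CV_j^\xi)$ is legitimate; this is implicit in your claim that ``the smallest starting value at which it is invoked is $\geq n^\alpha$'' but deserves a sentence, and is exactly what the paper's $g_L$ envelope handles.
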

Intuitively, the idea behind the proof is as follows. We start exploring the ancestral path to $1$ starting from $n$. For small $\eps>0$, we explore up to the ancestor with index $\left(\frac{2}{1-\beta}-\eps\right)n^{1-\beta}$ (with the convention that $n$ itself is its ancestor with index $0$, the parent of $n$ is the ancestor with index $1$, \dots). The concentration from Proposition \ref{prop:concentration_meso_n} essentially guarantees that this ancestor itself has label about $\alpha n$, so that now we can consider the distance $d(1,\alpha n)$ in the tree $\cT_{\alpha n}$, i.e., we can work \emph{inside} the tree $\cT_{\alpha n}$. Let us call getting inside the tree $\cT_{\alpha n}$, the \emph{first hop}. Similarly, starting from here, the \emph{second hop} gets us into the tree $\cT_{\alpha^2 n}$, and so on. Repeating these hops $K \log n$ times for a suitably chosen $K$, we can essentially \emph{get inside} the tree $\cT_{\delta n^{1-\beta}}$ for a sufficiently small $\delta>0$, and once we are in this final tree, we can simply bound the distance to $1$ from any vertex in it by the size of this tree, which is $\delta n^{1-\beta}$. Combining this term with the total contribution to $d(1,n)$ from all the hops, we show that the scaled distance $d(1,n)n^{-(1-\beta)}$ is about $\frac{2}{1-\beta}$, perhaps off by a small constant, which suffices to establish Proposition \ref{prop:dist_limit}. Let us now provide the argument.  
\begin{proof}
Fix $\alpha>0$. We will later let $\alpha \to 0$, so think of $\alpha$ being small. Define
\begin{align*}
    t_{\alpha}:=f_{\beta}^{-1}(\alpha),
\end{align*}
and note that $t_{\alpha}<\frac{2}{1-\beta}$ whenever $\alpha>0$, and further, $t_{\alpha} \to \frac{2}{1-\beta}$ as $\alpha \to 0$. Let $v_0=n$, and for $i \geq 0$, inductively define 
    \begin{align*}
        v_{i+1}:=L_{\lfloor t_{\alpha}v_i^{1-\beta} \rfloor}(v_i).
    \end{align*}
For any constant $K>0$, consider the event 
\begin{align*}
    \cS_n(K):=\bigcap_{i=0}^{K \log n}\cE_i,\;\;\text{where}\;\;\cE_i:=\left\{v_{i+1}\in\left[\alpha v_i - v_i^{\frac{1+\beta}{2}+\eps}, \alpha v_i + v_i^{\frac{1+\beta}{2}+\eps}\right] \right\},
\end{align*}
where \textcolor{black}{we choose} $\eps>0$ sufficiently small such that $\frac{1+\beta}{2}+\eps<1$. Let us first check the behavior of $d(1,n)$ on the event $\cS(K)$. Consider the maps $g_L(x)=\alpha x - x^{(1+\beta)/2+\eps}$ and $g_R(x)=\alpha x + x^{(1+\beta)/2+\eps}$. Observe that both $g_L$ and $g_R$ are monotone increasing on any interval of the form $(c,\infty)$, \textcolor{black}{for any $c$ satisfying 
\begin{equation}\label{eq:monotonicity_LB}
c>\left[\left(\frac{1+\beta}{2}+\eps \right)\alpha^{-1} \right]^{\left(\frac{1-\beta}{2}-\eps \right)^{-1}}.
\end{equation}} 
On the event $\cS_n(K)$, for any $0\leq i \leq K\log n$, we have $g_L(v_i)\leq v_{i+1} \leq g_R(v_i)$. Let us denote $M(K)=K \log n$. If we can use that $g_L$ and $g_R$ are monotone increasing, recalling $v_0=n$, inductively, we can obtain 
\begin{equation}\label{eq:final_label}
    g_L^{(i)}(n)\leq v_{i}\leq g_R^{(i)}(n) \ \ \text{ for all } 0\leq i\leq M(K)
\end{equation}
where for any function $f(\cdot)$, $f^{(k)}(\cdot)$ denotes its $k$-fold composition. It is straightforward to see that $g_R(x)$ is monotone increasing on $(0,\infty)$. To argue the monotonicity for $g_L(\cdot)$, it is sufficient to claim that on the event $\cS(K)$, $g_L(v_i)$ \textcolor{black}{is always at least the RHS of \eqref{eq:monotonicity_LB}, which will follow, if we in particular show that $g_L(v_i)$ diverges as $n \to \infty$ for any $1 \leq i \leq M(K)$. For this we need to choose $K$ cleverly, which we make precise next.} 

Note that for $i=0$, $g_L(v_0)=g_L(n)\gg1$. It is enough to check that $g_L^{(M(K))}(n)$ is divergent, where recall $M(K)=K \log n$ for some constant $K>0$. From the definition of $g_{L}(\cdot)$, inductively arguing we note that for all large $n$, $g_L^{(M(K))}(n)\geq (\alpha-\eps_1)^{M(K)}n=n^{1-\log(1/(\alpha-\eps_1))K}\gg 1$ whenever $K<\log(1/(\alpha-\eps_1))^{-1}$, for any $0<\eps_1<\alpha$. Thus, for the monotonicity argument, we require the condition
\begin{equation}\label{eq:monotone_condition}
    M(K)<\log(1/(\alpha-\eps_1))^{-1} \log n\;\;\text{for some}\;\;0<\eps_1<\alpha.
\end{equation}


Decomposing the distance
\begin{equation}\label{eq:dist_decomp}
    d(1,n)=\sum_{i=1}^{M(K)+1} d(v_{i-1},v_i)+d(v_{M(K)+1},1),
\end{equation}
we thus observe that on the event $\cS(K)$, we have the upper bound
\begin{align*}
    d(1,n)\leq \sum_{i=1}^{M(K)+1} (t_{\alpha}v_{i-1}^{1-\beta}+1)+d(v_{M(K)+1},1).
\end{align*}
Next, we note that by \eqref{eq:final_label}, the vertex $v_{M(K)+1}$ is contained in the tree $T_{g_R^{(M(K))}(n)}$, which means we can simply bound the distance $d(v_{M(K)+1},1)\leq g_R^{(M(K))}(n)$. Furthermore, using the upper bounds $v_{i-1}\leq g_R(v_i)$ and the monotonicity of the function $g_{R}$, we thus obtain
\begin{align*}
    d(1,n)\leq t_{\alpha}\sum_{i=0}^{M(K)}(g_R^{(i)}(n))^{1-\beta}+M(K)+g_R^{(M(K))}(n),
\end{align*}
where we use the convention $g_R^{(0)}(n)=n$. By some easy analysis, it is easy to check that $g_R^{(i)}(n)\leq (\alpha+\eps')^{i}n$ for any $\eps'>0$ and $i=0,1,\dots$, so that we obtain
\begin{align*}
    d(1,n)\leq \frac{t_{\alpha}n^{1-\beta}}{1-(\alpha+\eps')^{1-\beta}}+M(K)+(\alpha+\eps')^{M(K)}n.
\end{align*}
Let 
\begin{align*}
    M(K)=\left\lfloor \frac{\beta \log n}{\log(1/(\alpha+\eps'))}+\frac{\log(1/\eps'')}{\log(1/(\alpha+\eps))}\right\rfloor,
\end{align*}
for any $\eps''>0$, and let us immediately observe that by choosing $\beta,\eps'>0$ sufficiently small (as a function of $\eps_1$) the condition \eqref{eq:monotone_condition} is satisfied. We thus obtain
\begin{align*}
    d(1,n)\leq \left(\frac{t_{\alpha}}{1-(\alpha+\eps')^{1-\beta}}+\eps'' \right)n^{1-\beta}+o(n^{1-\beta}).
\end{align*}
Thus on the event $\cS_n(K)$, \textcolor{black}{by choosing all of} $\alpha,\eps,\eps', \eps_1$ \textcolor{black}{sufficiently small,} 
and recalling $t_{\alpha}\to \frac{2}{1-\beta}$ as $\alpha\to 0$, we obtain that for all large $n$,
\begin{align*}
    d(1,n)\leq \left(\frac{2}{1-\beta}+\delta \right)n^{1-\beta}
\end{align*}
for arbitrary $\delta>0$. A similar argument, starting again from \eqref{eq:dist_decomp} and now working with $g_L$ instead of $g_R$, establishes a lower bound on $d(1,n)$ on the event $\cS_n(K)$ as 
\begin{align*}
    d(1,n)\geq \left(\frac{2}{1-\beta}-\delta \right)n^{1-\beta},
\end{align*}
so that we need only check $\cS_n(K)$ holds with high probability. To this end, we write
\begin{align*}
    \prob\left(\cS_n(K)^c \right) \leq \sum_{i=0}^{M(K)} \prob(\cE_i^c).
\end{align*}
Note that,
\begin{align*}
    \prob(\cE_i^c)\leq \prob(\cE_i^c \cap \cE_{i-1})+\prob(\cE_{i-1}^c).
\end{align*}
Furthermore, conditionally on $v_i$, applying Corollary \ref{cor:concentration_meso} on the tree $T_{v_i}$, we obtain for any $0<\delta_1<\alpha$, and $i \geq 1$,
\begin{align*}
    \prob(\cE_i^c)\leq \E\left[\ind_{\cE_{i-1}}\exp(-Cv_i^{\xi(\eps)}) \right]+\prob(\cE_{i-1}^c)& \leq \exp(-Cg_L^{(i-1)}(n)^{\xi(\eps)})+\prob(\cE_{i-1}^c)\\& \leq \exp(-C((\alpha-\delta_2)^in)^{\xi(\eps)})+\prob(\cE_{i-1}^c), 
\end{align*}
{for $C=C(\alpha-\delta_1,t_{\alpha})$, a constant $\xi(\eps)>0$, where recall the constants $C=C(\delta,T)$ and $\xi(\eps)$ appearing in the statement of Corollary \ref{cor:concentration_meso}, and for} any $\delta_2>0$, where we again use the fact that $g_L^{(i-1)}(n)\geq (\alpha-\delta_2)^i n$ for any $i \geq 1$.  Applying recursively, we thus obtain
\begin{equation}\label{eq:error_bound_meso}
    \prob(\cS_n(K)^c)\leq \sum_{i=1}^{M(K)}(M(K)-i)\exp(-C((\alpha-\delta_2)^in)^{\xi(\eps)}).
\end{equation}
whenever $M(K)=O(\log n)$. Fixing $\delta_1=\delta_2=\alpha/2$ and letting $n \to \infty$, we note that the RHS of the last display is $o(1)$. This finishes the proof.
\end{proof}

\begin{rem}\label{rem:dist_tail_bound}
    Observe that analyzing the error bound \eqref{eq:error_bound_meso}, we in fact obtain the following tail bound for any $\eps>0$,
    \begin{equation}\label{eq:dist_tail_bound}
        \prob\left(\left|n^{-(1-\beta)}d(1,n)-\frac{2}{1-\beta}\right|\geq \eps\right)\leq \prob(\cS_n(K)^c)\leq \exp\left(-K(\eps)n^{\zeta} \right),
    \end{equation}
    for some $K(\eps)>0$ and $\zeta>0$ for all large $n$.
\end{rem}
\textcolor{black}{With this remark at hand, we can finally complete \textbf{Step} (iii). Note that the next proposition proves Theorem \ref{thm:ht_meso}.}

\begin{prop}
    Fix $\beta \in (0,1)$ and let $H_n(\beta)$ be the height of the tree $T_n$. Then
    \begin{align*}
        \frac{H_n(\beta)}{n^{1-\beta}}\convp \frac{2}{1-\beta}.
    \end{align*}
\end{prop}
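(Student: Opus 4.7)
The plan is to establish matching upper and lower bounds in probability, both leveraging the distance asymptotics from Proposition~\ref{prop:dist_limit} together with its quantitative tail estimate in Remark~\ref{rem:dist_tail_bound}.

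The lower bound is immediate: since $H_n(\beta) \geq d(1,n)$ by definition of the height, Proposition~\ref{prop:dist_limit} gives $n^{-(1-\beta)} H_n(\beta) \geq n^{-(1-\beta)} d(1,n) \convp \tfrac{2}{1-\beta}$, so $\prob(H_n(\beta) \geq (\tfrac{2}{1-\beta} - \eps)n^{1-\beta}) \to 1$ for every $\eps > 0$.

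For the matching upper bound, the key structural observation is that in our recursive model, once a vertex $v$ enters the tree its edge to its parent is fixed forever, since later vertices only attach to even younger vertices and never modify existing edges. Consequently the ancestral path from $v$ to the root $1$ in $\cT_n$ coincides with the ancestral path in $\cT_v$, so $d_{\cT_n}(1,v) = d_{\cT_v}(1,v)$ for every $v \leq n$. This lets us apply the tail estimate \eqref{eq:dist_tail_bound} to each $v$ individually. Fixing $\eps > 0$, that estimate yields constants $K(\eps), \zeta > 0$ and an integer $v_0$ such that for every $v \geq v_0$,
\begin{align*}
\prob\!\left(d(1,v) > \left(\tfrac{2}{1-\beta} + \eps\right) v^{1-\beta}\right) \leq \exp\!\left(-K(\eps)\, v^{\zeta}\right).
\end{align*}
A union bound over $v \in \{v_0,\ldots,n\}$ then gives
\begin{align*}
\prob\!\left(\exists\, v \in [v_0,n]: d(1,v) > \left(\tfrac{2}{1-\beta} + \eps\right) v^{1-\beta}\right) \leq \sum_{v=v_0}^{n} \exp\!\left(-K(\eps)\, v^{\zeta}\right) = o(1),
\end{align*}
since the summand decays stretched-exponentially. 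For $v < v_0$ the trivial bound $d(1,v) \leq v-1 \leq v_0$ is $o(n^{1-\beta})$. Combining, with high probability every $v \leq n$ satisfies $d(1,v) \leq (\tfrac{2}{1-\beta} + 2\eps)\, n^{1-\beta}$, which gives $H_n(\beta) \leq (\tfrac{2}{1-\beta} + 2\eps)\, n^{1-\beta}$ whp.

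The argument is essentially routine once Remark~\ref{rem:dist_tail_bound} is in hand; the only point to verify is that the stretched-exponential decay in $v^{\zeta}$ survives a union bound of size $n$, which is clear since $\zeta > 0$. This also formalises the intuition behind Step~(iii) in the authors' recipe: because $d(1,v) \lesssim v^{1-\beta}$ uniformly for every $v$, no vertex with index in $[1,(1-\delta)n]$ can realise the height for sufficiently small $\delta > 0$, so the maximum is with high probability achieved by a vertex of index $\Theta(n)$, for which $d(1,v)$ is asymptotic to $\tfrac{2}{1-\beta}\, n^{1-\beta}$ by Proposition~\ref{prop:dist_limit}.
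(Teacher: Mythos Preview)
Your approach is essentially the same as the paper's: the lower bound comes directly from Proposition~\ref{prop:dist_limit}, and the upper bound combines the stretched-exponential tail estimate of Remark~\ref{rem:dist_tail_bound} with a union bound over all vertices. You also make explicit the observation $d_{\cT_n}(1,v)=d_{\cT_v}(1,v)$, which the paper uses implicitly.

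There is, however, a small but genuine gap in your union bound. With $v_0$ a \emph{fixed} integer, the sum
\[
\sum_{v=v_0}^{n}\exp\!\bigl(-K(\eps)\,v^{\zeta}\bigr)
\]
is \emph{not} $o(1)$ as $n\to\infty$: it is increasing in $n$ and converges to the finite positive constant $\sum_{v\ge v_0}\exp(-K(\eps)v^{\zeta})$. Stretched-exponential decay of the summand guarantees summability, not that the tail sum vanishes. So as written you only obtain $\limsup_n \pr\bigl(H_n>(\tfrac{2}{1-\beta}+2\eps)n^{1-\beta}\bigr)\le C(v_0)$ for some constant $C(v_0)>0$, which does not give convergence in probability.

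The fix is immediate: let the cutoff grow with $n$. Any choice $v_0=v_0(n)\to\infty$ with $v_0(n)=o(n^{1-\beta})$ works, since then the trivial bound $d(1,v)\le v_0(n)$ for $v<v_0(n)$ is still $o(n^{1-\beta})$, while $\sum_{v\ge v_0(n)}\exp(-K(\eps)v^{\zeta})\to 0$. This is precisely why the paper takes the lower limit $\alpha_1 n^{1-\beta}$ rather than a fixed constant. Alternatively, one can keep $v_0$ fixed but argue in two steps: first note that for any $\delta>0$ one may enlarge $v_0$ so that $\sum_{v\ge v_0}\exp(-K(\eps)v^{\zeta})<\delta$, giving $\limsup_n \pr(\cdot)\le\delta$ for every $\delta>0$.
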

\begin{proof}
Fix $\eps>0$. Note that by Proposition \ref{prop:dist_limit}, since the height $H_n$ is always larger than $d(1,n)$, we have
\begin{align*}
    \prob\left(H_n>(1-\eps)\frac{2}{1-\beta}n^{1-\beta} \right)\to 1,
\end{align*}
as $n \to \infty$, so that we need only check
\begin{equation}\label{eq:height_UB}
    \prob\left(H_n<(1+\eps)\frac{2}{1-\beta}n^{1-\beta} \right)\to 1.
\end{equation}
We make use of Remark \ref{rem:dist_tail_bound} in this proof. Fix $\alpha_1,\alpha_2>0$ small. Note that by the tail bound \eqref{eq:dist_tail_bound} and a union bound,
\begin{equation}\label{eq:UB_ht_not_by_young_vertices}
\begin{split}
    &\prob\left(\exists\;\;v\in[\alpha_1n^{1-\beta},(1-\alpha_2)n]:d(v,1)> \frac{2}{1-\beta}((1-\alpha_2)^{1-\beta}+\alpha_3)n^{1-\beta}\right)\\& \leq \sum_{v=\alpha_1 n^{1-\beta}}^{(1-\alpha_2) n}\prob\left(d(v,1)>\left(\frac{2}{1-\beta}+\alpha_3\right)v^{1-\beta} \right)\leq \sum_{v=\alpha_1n^{1-\beta}}^{(1-\alpha_2)n}\exp\left(K(\alpha_3)v^{\zeta} \right)=o(1)
    \end{split}
\end{equation}
\textcolor{black}{as $n \to \infty$,} for some small $\alpha_3>0$. Thus with high probability, none of the vertices in $[\alpha_1n^{1-\beta},(1-\alpha_2)n]$ can have height more than $\frac{2}{1-\beta}((1-\alpha_2)^{1-\beta}+\alpha_3)n^{1-\beta}$. The height of any vertex $v \in [1,\alpha_1n^{1-\beta}]$ is trivially at most $\alpha_1n^{1-\beta}$. Since the vertex $n$ has height larger than $\frac{2}{1-\beta}((1-\alpha_2)^{1-\beta}+\alpha_3)n^{1-\beta}$ with high probability, we note that the height of the tree is achieved by a vertex $v \in [(1-\alpha_2)n,n]$ with high probability. But again by a union bound in the spirit of \eqref{eq:UB_ht_not_by_young_vertices} \textcolor{black}{combined with} the tail bound \eqref{eq:dist_tail_bound}, we can argue that with high probability, none of the vertices $v \in [(1-\alpha_2)n,n]$ can have height lying outside the interval $[\frac{2}{1-\beta}v^{1-\beta}-\alpha_4n^{1-\beta},\frac{2}{1-\beta}v^{1-\beta}+\alpha_4n^{1-\beta}]$, for $\alpha_4>0$ arbitrarily small. By choosing $\alpha_2$ and $\alpha_4$ sufficiently small (as a function of $\eps$), thus the height of the tree $H_n$ is at most $(1+\eps)\frac{2}{1-\beta}n^{1-\beta}$ with high probability, which proves \eqref{eq:height_UB}. \end{proof}

\subsection{A global phase transition at $\beta=1/2$ in the mesoscopic regime}
{

\begin{proof}[Proof of Theorem \ref{thm:ghp_meso}]
Let $\beta<1/2$. In Proposition \ref{prop:dist_limit} we showed that $n^{-(1-\beta)}d_{\mathcal{T}_n}(1,n)\convp \frac{2}{1-\beta}$ as $n\to\infty$.
Now, fix $\varepsilon>0$ small enough so that $\beta+\varepsilon<1-\beta$. We will show that with high probability, for each $m<n$, the distance from vertex $m$ to the ancestral line of $n$ satisfies $d(m,m\wedge n)<n^{\beta+\varepsilon}$, which shows that, with high probability, the path from the root to vertex $n$ is at Hausdorff distance $o(n^{1-\beta})$ from $\cT_n$ and thus proves the theorem. 

Let $G_n\sim \operatorname{Geom}(n^{-\beta})$ a Geometric random variable with for $k\in \N$, $\pr(G_n>k)=(1-n^{-\beta})^k$. We will show that for each $m<n$ \begin{equation}\label{eq:geom_stochdom}d(m,m\wedge n)\prec_{st}G_n+1.\end{equation} 
Then, 
\[\pr\left(\exists m<n: d(m,m\wedge n)>n^{\beta+\varepsilon} \right) \le n (1-n^{-\beta})^{n^{\beta+\epsilon}}\le n\exp(-n^\epsilon)\to 0\]
as $n\to \infty$. Thus we focus on showing \eqref{eq:geom_stochdom}. 

We condition on the unique path
\[P_n=(1=L_n(d(1,n))<L_n(d(1,n)-1)<\dots< L_n(0)=n)\] in $\mathcal{T}_n$ from vertex $1$ to vertex $n$. We will show \eqref{eq:geom_stochdom} by iteratively revealing the ancestors of vertex $m$ and showing that, at each step, the next ancestor is on $P_n$ with probability at least $n^{-\beta}$. Formally, we will bound
\begin{equation}\label{eq:prob_ancestor} \pr(L_m(k+1)\in P_n | P_n, L_m(1), \dots, L_m(k))\end{equation}
from below by $n^{-\beta}$.

We use that for any $2\le m\le n$, $[j(m-1), m-1]\cap P_n$ is non-empty. Indeed, let $\ell$ be maximal such that $L_n(\ell)\ge m$. If $L_n(\ell)=m$, then the claim follows immediately. Otherwise, \[L_n(\ell+1)\in [j(L_n(\ell)-1) ,  m-1]\subset [j(m-1), m-1]  \] because $j(n)$ is non-decreasing. 

Then, to bound \eqref{eq:prob_ancestor} we see that either $L_m(k)\in P_n$, in which case \eqref{eq:prob_ancestor} equals $1$. Otherwise, on the conditioning, $L_m(k+1)$ is a unique element from $[j(L_m(k)-1), L_m(k)-1]$. By our observation, at least one element of this set is also in $P_n$. Moreover, the set has size at most $n^{\beta}$, so \eqref{eq:prob_ancestor} is at least $n^{-\beta}$. This implies \eqref{eq:geom_stochdom}. 

\end{proof}

To prove Theorems~\ref{thm:meso_star} and~\ref{thm:meso_not_crt}, we first introduce an algorithm that for $k\ge 1$ and $n=n_1>n_2>\dots> n_k\ge j(n-1)$ allows us to explore the ancestral lines of $n_1,\dots, n_k$ up to the highest branchpoint in the subtree of $\cT_n$ spanned by $1,n_1,\dots, n_k$. Moreover, applying the argument inductively allows us to reveal the full subtree of  $\cT_n$ spanned by $1,n_1,\dots, n_k$. 

The algorithm explores the ancestral lines of $n_1,\dots, n_k$ by iteratively revealing the ancestor of the last explored vertex across the $k$ paths that has the largest label. The algorithm terminates when the highest branchpoint is reached. For $i=1,\dots, k$ we define a sequence of indices $M^i_0,M^i_1,\dots$ so that at time $m$, we have explored $M^i_m$ ancestors of $n_i$. We let $L(m)$ denote the label of $m$th vertex to get revealed.

Set $m=0$ and $M^1_0=\dots=M^k_0=0$ and do the following.
\begin{description}
    \item[(1)] If there is $1\le \ell<\ell'\le k$ for which $L_{n_\ell}(M^\ell_{m})=L_{n_{\ell'}}(M^{\ell'}_m)$ then set $T=m$ and terminate.  
    \item[(2)] Otherwise, set $\ell=\argmax_{1\le i\le k}L_{n_i}(M^i_m)$.  We explore the next ancestor of $n_\ell$ with label $L_{n_\ell}(M^\ell_m+1)$ so we set $L(m+1)=L_{n_\ell}(M^\ell_m+1)$. For $1\le i \le k$, set $M^i_{m+1}=M^i_{m}+\ind_{i=\ell}$, increase $m$ by $1$ and return to {\bf (1)}.
   \end{description}
   
Observe that for all $m$, for $\ell=\argmax_{1\le i \le k} L_{n_i}(M^i_m)$, for any $i$, it holds that $L_{n_i}(M^i_m)\ge j(L_{n_\ell}(M^\ell_m)-1)$. Indeed, if $M^i_m=0$ this follows from the assumption that $n_i\ge j(n)$, because $L_{n_\ell}\le n$ and $j(n)$ is non-decreasing.  Otherwise, by construction, $L_{n_i}(M^i_m-1)>L_{n_\ell}(M^\ell_m)$ so since $L_{n_1}(M^i_m)\ge j(L_{n_i}(M^i_m-1)-1)$ and $j(n)$ is non-decreasing, it follows that $L_{n_1}(M^i_m)\ge j(L_{n_\ell}(M^\ell_m)-1)$.

Before proving the theorems, we will first show that for all $1\le i \le k$, $M^i_m$ is uniformly close to $m/k$ as long as we do not yet explore vertices with very low labels. We also show that $L(m)$ has a limit under rescaling. Recall the definition of $f_\beta$ from Proposition~\ref{prop:concentration_meso_n}. 

\begin{lem}\label{lem:explore_ancestors}
 For $k\ge 1$ and $n=n_1>n_2>\dots> n_k\ge j(n-1),$ for any $\delta>0$, for $\tau=\min\{m:L(m)\le \delta n \}$, for $i=1,\dots, k$,
\[n^{-(1-\beta)}\sup_{0\le m \le T\wedge\tau} |M^i_m-m/k|\convp 0\quad \text{ and }\quad \sup_{0\le m \le T\wedge\tau}\left|n^{-1}L(m)-f_\beta(n^{-(1-\beta)}m/k)\right|\convp 0 \]
\end{lem}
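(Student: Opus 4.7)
The plan is to reduce the analysis of the $k$-fold exploration to independent copies of the one-path analysis of Corollary~\ref{cor:concentration_meso}. The key observation is that the algorithm is a greedy load balancer: it always advances the path with the largest current label, so the $k$ labels $L^{(i)}(m):=L_{n_i}(M^i_m)$, $1\le i\le k$, stay within $O(n^\beta)$ of one another throughout the run.

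\textbf{Step 1 (Deterministic spread bound).} Set $\Delta(m):=\max_i L^{(i)}(m)-\min_i L^{(i)}(m)$. I would first show that $\Delta(m)\le n^\beta$ for all $0\le m\le T\wedge\tau$. Since $j(n-1)\ge n-n^\beta$ and $n_1\le n$, the base case $\Delta(0)\le n^\beta$ holds. Assuming $\Delta(m)\le n^\beta$, the algorithm advances $\ell:=\arg\max_i L^{(i)}(m)$ by $U\in\{1,\ldots,\lfloor L^{(\ell)}(m)^\beta\rfloor\}$; a short case analysis on the location of $L^{(\ell)}(m)-U$ relative to the remaining $k-1$ labels gives $\Delta(m+1)\le\max\bigl(\Delta(m),\,L^{(\ell)}(m)^\beta\bigr)\le n^\beta$.

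\textbf{Step 2 (Fluid limit of the merged process).} Let $U_{m+1}$ be the decrement at step $m+1$ and set $S^{(i)}(m):=n_i-L^{(i)}(m)=\sum_{m'<m,\ \ell_{m'+1}=i}U_{m'+1}$. Summation over $i$ gives $\sum_{i=1}^k S^{(i)}(m)=\sum_{m'=0}^{m-1}U_{m'+1}$; combining with Step~1 and $|n_i-n_j|\le n^\beta$ yields
\[ k(n-L(m))\;=\;\sum_{m'=0}^{m-1}U_{m'+1}\;+\;O(n^\beta).\]
Since $\E[U_{m'+1}\mid\cF_{m'}]=\tfrac12 L^{(\ell_{m'+1})}(m')^\beta+O(1)=\tfrac12 L(m')^\beta+O(n^{2\beta-1})$ by Step~1 and a Taylor expansion of $x\mapsto x^\beta$, a Doob--Meyer decomposition followed by Gronwall's inequality, in direct analogy with the proof of Proposition~\ref{prop:concentration_meso_n}, shows that for every $T<T_\beta$ and $\eps>0$ there exist constants $C,\xi>0$ such that
\[ \prob\!\Bigl(\sup_{m\le T\wedge\tau\wedge\lfloor Tkn^{1-\beta}\rfloor}\bigl|n^{-1}L(m)-f_\beta\bigl(m/(kn^{1-\beta})\bigr)\bigr|>n^{-(1-\beta)/2+\eps}\Bigr)\;\le\;\exp(-Cn^\xi).\]
The factor $1/k$ inside $f_\beta$ arises because the rescaled process $\bar L(t):=n^{-1}L(\lfloor tn^{1-\beta}\rfloor)$ satisfies the ODE $\bar L'(t)=-\bar L(t)^\beta/(2k)$, whose solution with $\bar L(0)=1$ is $f_\beta(t/k)$.

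\textbf{Step 3 (Balance of the paths).} For $m<T$ the ancestral chains $L_{n_1}(\cdot),\ldots,L_{n_k}(\cdot)$ are disjoint and, in the underlying tree construction, depend on disjoint collections of independent uniform parent choices; hence they are mutually independent. Since $n_i\ge n-n^\beta\ge n/2$ for large $n$, Corollary~\ref{cor:concentration_meso} applied to each chain (with $\delta=1/2$) and a union bound over $i=1,\ldots,k$ gives a high-probability event on which
\[ \sup_{1\le i\le k}\sup_{k'\le f_\beta^{-1}(\delta/4)\,n_i^{1-\beta}}\bigl|n_i^{-1}L_{n_i}(k')-f_\beta(k'/n_i^{1-\beta})\bigr|\;=\;o(1).\]
On this event, combined with Step~2 and the bound $L^{(i)}(m)\ge L(m)-n^\beta\ge\delta n/2$ for $m\le T\wedge\tau$ (which in particular ensures $M^i_m\le f_\beta^{-1}(\delta/4)\,n_i^{1-\beta}$), one obtains
\[ nf_\beta(M^i_m/n^{1-\beta})+o(n)\;=\;L^{(i)}(m)\;=\;L(m)+O(n^\beta)\;=\;nf_\beta\bigl(m/(kn^{1-\beta})\bigr)+o(n).\]
Since $f_\beta$ is continuous and strictly decreasing with derivative bounded away from zero on $[0,f_\beta^{-1}(\delta/4)]$, inversion gives $M^i_m/n^{1-\beta}-m/(kn^{1-\beta})=o(1)$ uniformly in $i$ and $m$, i.e.\ $|M^i_m-m/k|=o(n^{1-\beta})$, proving the first convergence; the second is exactly the conclusion of Step~2.

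The main obstacle is Step~2: verifying that the cumulative drift error from replacing $L^{(\ell_{m'+1})}(m')^\beta$ by $L(m')^\beta$ (an $O(n^{2\beta-1})$ error per step, totaling $O(n^\beta)=o(n)$ over the $O(n^{1-\beta})$ steps) and the martingale fluctuations (handled by an exponential martingale inequality exactly as in Proposition~\ref{prop:concentration_meso_n}) combine smoothly via Gronwall. Once Step~2 is in place, Step~3 follows from the existing one-path concentration, with no further martingale analysis needed.
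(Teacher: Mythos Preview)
Your approach is correct but proceeds in the reverse order from the paper. You first establish the deterministic spread bound (your Step~1, which coincides with the observation recorded just before the lemma in the paper), then prove the fluid limit of $L(m)$ directly (Step~2) by redoing the semimartingale/Gronwall analysis of Proposition~\ref{prop:concentration_meso_n} for the merged $k$-path process, and finally deduce the balance $M^i_m\approx m/k$ (Step~3) by combining Step~2 with the single-path concentration and inverting $f_\beta$.

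The paper instead proves the balance statement \emph{first}, and with no new martingale work, via the combinatorial observation that since the algorithm reveals ancestors in decreasing label order,
\[
|M^i_m-M^j_m|\ \le\ \sup_{a>\delta n}\Bigl|\sup\{k':L_{n_i}(k')\ge a\}-\sup\{k':L_{n_j}(k')\ge a\}\Bigr|,
\]
i.e., a difference of hitting times of the \emph{individual} ancestral chains. Since each chain converges uniformly to $f_\beta$ by Proposition~\ref{prop:concentration_meso_n} and $f_\beta$ is strictly decreasing on the relevant range, their hitting times converge uniformly as well, yielding the balance directly. The fluid limit of $L(m)$ then follows in one line from the balance plus the single-path concentration. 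This route is shorter because it extracts both conclusions from the existing single-path result rather than rebuilding the differential-equation argument for the merged process; your route, on the other hand, gives an explicit stand-alone concentration bound for $L(m)$ with stretched-exponential tails, which may be useful elsewhere.

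One minor remark on your Step~3: the assertion that the ancestral chains are mutually independent before coalescence is delicate, since conditioning on $\{m<T\}$ couples them. Fortunately you do not need it: Corollary~\ref{cor:concentration_meso} (or Proposition~\ref{prop:concentration_meso_n} applied to each $n_i$) gives a \emph{marginal} concentration bound for each $L_{n_i}(\cdot)$, and a union bound over the finitely many $i=1,\ldots,k$ suffices.
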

\begin{proof}
Observe that 
\[|M^i_m-m/k| = \frac{1}{k}\left |kM^i_m-\sum_{j=1}^k M^j_m\right |\le \frac{1}{k}\sum_{j=1}^k |M_m^i-M_m^j|\]
because $m=\sum_{j=1}^k M^j_m$, so it is enough to show that \[\sup_{0\le m \le T\wedge\tau} |M_m^i-M_m^j|\convp 0\] for all $1\le i,j\le k$. We see that  $|M_m^i-M_m^j|$ is the difference between the number of ancestors of $n_i$ and $n_j$ explored by time $m$. Since we explore the union of their ancestors by decreasing label, this equals the difference between the number of ancestors of $n_i$ and $n_j$ with label at least $a$ for some $a$. Since we only consider $m\le \tau$, we only consider $a> \delta n$. Thus,
\begin{align*}
\sup_{0\le m \le T\wedge\tau} |M_m^i-M_m^j|&\le \sup_{\delta n < a \le n}\left|\#\left\{a \le v \le n: v\preceq n_i\right\}-\#\left\{a \le v \le n: v\preceq n_j\right\}\right|\\
&= \sup_{\delta n<a \le n}\left|\sup\left\{v:L_{n_i}(v)\ge a\right\}-\sup\left\{k:L_{n_j}(k)\ge a\right\}\right|.
\end{align*}
 Proposition~\ref{prop:concentration_meso_n}, applied with $n$ replaced by $n_i$ or $n_j$, implies that this upper bound converges to $0$ in probability when divided by $n^{1-\beta}$. Indeed, this proposition implies that $\frac{1}{n}L_{n_i}(\lfloor t n^{1-\beta}\rfloor)\ind\set{L_{n_i}(\lfloor t n^{1-\beta}\rfloor)>\delta n/2 }$ and $\frac{1}{n}L_{n_j}(\lfloor t n^{1-\beta}\rfloor)\ind\set{L_{n_j}(\lfloor t n^{1-\beta}\rfloor)>\delta n/2 }$ converge uniformly in probability to the same non-increasing function, which implies uniform convergence of their hitting times as well. 
 
The second statement follows from Proposition~\ref{prop:concentration_meso_n},
\begin{align*}\left|n^{-1}L(m)-f_{\beta}(n^{-(1-\beta)}m/k)\right| \le  \sum_{i=1}^k\left|n^{-1}L_{n_i}(M^i_m)-f_\beta(n^{-(1-\beta)}m/k)\right| ,\end{align*}
 and the fact that $M^1_m,\dots, M^k_m$ are uniformly close to $m/k$ by the first statement. 
\end{proof}

\begin{proof}[Proof of Theorem \ref{thm:meso_star}]
We fix $k\ge 1$ and $\varepsilon>0$. We use the algorithm to explore the ancestral lines of $n_1=n, n_2=n-1,\dots, n_k=n-k+1$. Observe that for $T$ the time at which the algorithm terminates, $T$ is a lower bound for the number of vertices on the union of the ancestral lines of $n_1,\dots, n_k$. Thus, by Proposition~\ref{prop:dist_limit}, it suffices to show that for  $n$ large enough, with high probability,
\[T\ge \left(  \frac{2k}{1-\beta}-\varepsilon \right)n^{1-\beta} .\] 

Fix $\delta>0$ and let  $\tau=\min\{k:L(k)\le \delta n \}$.
We will show that with high probability, \[T\wedge \tau \ge  \left(  \frac{2k}{1-\beta}-\varepsilon\right) n^{1-\beta},\] which proves the statement. 

Observe that conditional on $L_{n_i}(1), \dots, L_{n_i}(M^i_{m})$ for $1\le i \le m$, if $T\wedge \tau >m$ then the probability that $T=m+1$ is at most $(k-1)(\delta n)^{-1}$. Indeed, if $T>m$ then for $\ell=\argmax_{1\le i\le k}L_{n_i}(M^i_m)$, we explore the next ancestor of $n_\ell$. Under the conditioning, this ancestor is a uniform pick from the interval 
$[j(L(m)-1), L(m)-1]$ that contains $\{L_{n_i}(M^i_m):i\ne\ell\}$. Since $\tau>m$ this interval has length at least $(\delta n)^\beta$, so the ancestor equals one of $\{L_{n_i}(M^i_m):i\ne\ell\}$ with probability at most $(k-1)(\delta n)^{-\beta}$. Thus, for any $K\ge 1$, 
\begin{align} \begin{split}\label{eq:bound_T}\pr(T\wedge \tau \le  K)
&\le \pr(L(K)\le \delta n)+\pr(\text{Bin}(K, (k-1) (\delta n )^{-\beta})>0).\end{split}
\end{align}

We see that, by Lemma~\ref{lem:explore_ancestors} for \[K=\left \lfloor \left(  \frac{2k}{1-\beta}-\varepsilon \right) n^{1-\beta}\right\rfloor \quad \text{ and }\quad \delta= \frac{1}{2}f_\beta\left(\frac{2}{1-\beta}-\frac{\varepsilon}{k} \right) \] the first term in \eqref{eq:bound_T} goes to $0$. By Markov's inequality, since $\beta>1/2$, the second term goes to $0$ as well. This implies the claim, and the theorem follows. 

\end{proof}
For the proof of Theorem~\ref{thm:meso_not_crt}, we first prove the following lemma, that only studies the height of the highest branchpoint of a subtree of $\cT_n$ spanned by $k$ vertices. 
\begin{lem}\label{lem:1st_step_ancestral}
Fix $k\ge 1$ and let $n=n_1>n_2>\dots> n_k\ge j(n-1)$. Then, for $\beta=1/2$ and $D_n$ the distance from $1$ to the highest branchpoint in the subtree of $\cT_n$ spanned by $1,n_1,n_2,\dots, n_k$, it holds that $n^{-1/2}D_n\convd 4X_{k-1}$ where $X_{k-1}$ is distributed as $U^{1/(4k(k-1))}$ for $U$ a $\operatorname{uniform}[0,1]$ random variable. Moreover, jointly, $n^{-1}L(T)\convd X_{k-1}^2$. \end{lem}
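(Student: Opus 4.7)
The plan is to analyze the termination time $T$ of the exploration algorithm via a hazard-rate calculation, identify the limiting distribution of $n^{-1}L(T)$, and then apply Proposition \ref{prop:dist_limit} inside the subtree $\cT_{L(T)}$ to translate this into a statement about $D_n$.

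\emph{Identifying the hazard rate.} Writing $\Lambda(m):=\max_{1\le i\le k}L_{n_i}(M^i_m)$ for the largest current pointer just before step $m$, the algorithm reveals $L(m+1)$ uniformly on $[j(\Lambda(m)-1),\Lambda(m)-1]$, an interval of length $\lfloor\Lambda(m)^{1/2}\rfloor$ that contains all other $k-1$ pointers (by the observation in the paragraph following the algorithm). Conditionally on $T>m$ and the history, the probability of termination at step $m+1$ is therefore exactly $(k-1)/\lfloor\Lambda(m)^{1/2}\rfloor$, giving
\[ \pr(T>M) \;=\; \E\prod_{m=0}^{M-1}\Bigl(1-\frac{k-1}{\lfloor\Lambda(m)^{1/2}\rfloor}\Bigr). \]

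\emph{Computing the limit.} For $\beta=1/2$ one has $f_{1/2}(t)=(1-t/4)^2$. Fix $x\in(0,1)$, set $\delta:=x/2$, and $M_x:=\lfloor 4kn^{1/2}(1-\sqrt x)\rfloor$, so that the deterministic approximation of $L(M_x)/n$ equals $x$. Lemma \ref{lem:explore_ancestors} and the observation that each of the $k$ pointers deviates from $L(m)$ by at most $k$ algorithm steps (so that an application of Proposition \ref{prop:concentration_meso_n} transfers to $\Lambda(m)$) give, on the event $\{\tau>M_x\}$, the uniform approximation $\Lambda(m)/n = (1-m/(4kn^{1/2}))^2 + o(1)$. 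Using $\log(1-p)=-p+O(p^2)$ with $p=O(n^{-1/2})$ (so the squared errors sum to $O(n^{-1/2})$) and the change of variables $u=m/(kn^{1/2})$,
\[ \sum_{m=0}^{M_x-1}\frac{k-1}{\lfloor\Lambda(m)^{1/2}\rfloor} \;\convp\; k(k-1)\int_0^{4(1-\sqrt x)}\frac{du}{1-u/4} \;=\; -2k(k-1)\log x. \]
By dominated convergence, $\pr(T>M_x)\to x^{2k(k-1)}$. Since $L(\cdot)$ is approximately monotone decreasing and $|L(m)-\Lambda(m)|=o(n)$ uniformly, the events $\{T>M_x\}$ and $\{n^{-1}L(T)<x\}$ have asymptotically equal probability. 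Because $\pr(X_{k-1}^2\le x)=\pr(U\le x^{2k(k-1)})=x^{2k(k-1)}$, we conclude $n^{-1}L(T)\convd X_{k-1}^2$.

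\emph{From $L(T)$ to $D_n$ and joint convergence.} The ancestral path in $\cT_n$ from $1$ to the highest branchpoint (labelled $L(T)$) lies entirely in $\cT_{L(T)}$, so $D_n = d_{\cT_{L(T)}}(1,L(T))$. By Proposition \ref{prop:dist_limit} together with the tail bound in Remark \ref{rem:dist_tail_bound} applied inside $\cT_{L(T)}$, on the event $\{L(T)\ge \delta n\}$ we have $D_n/\sqrt{L(T)}\convp 4$. Combined with $n^{-1}L(T)\convd X_{k-1}^2$, which is strictly positive almost surely, Slutsky's lemma yields the joint convergence $(n^{-1/2}D_n,\,n^{-1}L(T))\convd(4X_{k-1},X_{k-1}^2)$, giving both claims of the lemma.

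\emph{Main obstacle.} The principal technical difficulty is the Riemann-sum step: converting the discrete product of survival probabilities into the exponential of a continuous integral requires uniform concentration of $\Lambda(m)/n$ around $f_{1/2}(m/(kn^{1/2}))$ that is strong enough to be inserted into the integrand $1/(1-u/4)$, which becomes singular as $u\uparrow 4$. Truncating at the stopping time $\tau$ keeps us uniformly bounded away from this singularity, and the control of $\Lambda(m)-L(m)$ on the scale $o(\sqrt n)$ (not stated explicitly in Lemma \ref{lem:explore_ancestors} but obtainable from the same concentration argument) is needed to recover the precise exponent $2k(k-1)$.
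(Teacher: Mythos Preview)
Your proof is correct and follows essentially the same approach as the paper: compute the per-step termination hazard $(k-1)/\lfloor L(m)^{1/2}\rfloor$, convert the survival product into the exponential of a Riemann sum via Lemma~\ref{lem:explore_ancestors}, and integrate to obtain the limit law. The only cosmetic difference is in the last step: the paper recovers $D_n$ as $d(1,n)-M^1_T$ using $M^1_T\approx T/k$ and Proposition~\ref{prop:dist_limit} at $n$, whereas you write $D_n=d(1,L(T))$ and apply Proposition~\ref{prop:dist_limit} (with the uniform tail bound of Remark~\ref{rem:dist_tail_bound}) inside $\cT_{L(T)}$; both routes yield the same conclusion.
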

\begin{proof}
We conduct a comparable analysis as in the proof of Theorem~\ref{thm:meso_star}. Fix $\delta>0$ and let  $\tau=\min\{k:L(k)\le \delta n \}$. Observe that conditional on $L_{n_i}(1), \dots, L_{n_i}(M^i_{m})$ for $1\le i \le m$, if $T\wedge \tau >m$ then for $\ell=\argmax_{1\le i\le k}L_{n_i}(M^i_m)$, we explore the next ancestor of $n_\ell$. Under the conditioning, this ancestor is a uniform pick from the interval 
$[j(L(m)-1), L(m)-1]$ that contains $\{L_{n_i}(M^i_m):i\ne\ell\}$. This interval has length $\lfloor (L(m)-1)^{1/2} \rfloor$, so the ancestor equals one of $\{L_{n_i}(M^i_m):i\ne\ell\}$ with probability $(k-1)\lfloor (L(m)-1)^{1/2} \rfloor^{-1}$. Thus, 
\begin{align*}&\pr(T> m+1|
L_{n_1}(1), \dots, L_{n_1}(M^1_{m}), \dots, L_{n_k}(1), \dots, L_{n_k}(M^k_{m}),T\wedge \tau > m)\\&\quad = 1-(k-1)\lfloor (L(m)-1)^{1/2} \rfloor^{-1} .\end{align*}
Thus, inductively, we see that, for any $\varepsilon>0$, 
\begin{align*}\pr(T> m+1)&\le \prod_{i=1}^m \left(1-(k-1)n^{-1/2} (f_{1/2}(n^{-1/2} i/k)+\eps)^{-1/2}\right)\\
&\quad +\pr\left (\sup_{0\le i \le T\wedge\tau}\left|n^{-1/2}\lfloor (L(i)-1)^{1/2} \rfloor-f_{1/2}(n^{-1/2}i/k)^{1/2}\right|>\eps\right) \\
&\quad +\pr(L(m)\le \delta n)\end{align*}
and 
\begin{align*}\pr(T> m+1)&\ge \prod_{i=1}^m(1-(k-1)n^{-1/2} (f(n^{-1/2} i/k)-\eps)^{-1/2} )\\
&\quad -\pr\left (\sup_{0\le i \le T\wedge\tau}\left|n^{-1/2}\lfloor (L(i)-1)^{1/2} \rfloor-f_{1/2}(n^{-1/2}i/k)^{1/2}\right|>\eps\right) \\
&\quad -\pr(L(m)\le \delta n).\end{align*}
By Lemma~\ref{lem:explore_ancestors} and Proposition \ref{prop:concentration_meso_n}, for $0<t<1$, using that $T_{1/2}=4$, for   $\delta,\varepsilon<f_{1/2}(4t)$
\begin{align*}&\prod_{i=1}^{\lfloor 4k t n^{1/2}\rfloor}(1-(k-1)n^{-1/2} (f_{1/2}(n^{-1/2} i/k)-\eps)^{-1/2} )+o(1)\\&\quad \le  \pr(T\ge \lfloor 4kt n^{1/2}\rfloor)\le\prod_{i=1}^{\lfloor 4k t n^{1/2}\rfloor}(1-(k-1)n^{-1/2} (f_{1/2}(n^{-1/2} i/k)+\eps)^{-1/2} )+o(1) .\end{align*}
By a standard argument, writing the product as an exponential of a sum of logarithms, considering the Taylor expansion of $\log(1+x)$ for $x$ near $0$ and recognizing a Riemann sum in the exponent, the left-hand side and right-hand side converge to 
$\exp(-(k-1)\int_{0}^{4kt}(f_{1/2}(s/k)-\varepsilon)^{-1/2} ds) $ and $\exp(-(k-1)\int_{0}^{4kt}(f_{1/2}(s/k)+\varepsilon)^{-1/2} ds)$ respectively. Thus, since $\varepsilon$ was arbitrary, changing variables in the integral, we see that \[\pr(T\ge \lfloor 4kt n^{1/2}\rfloor)\to \exp\left(-4k(k-1)\int_{0}^tf_{1/2}(4s)^{-1/2} ds\right) \] 
as $n\to \infty$. Recalling that 
$f_{1/2}(t)=(1-t/4)^2$ for $t\in[0,4]$, we see that the limit equals $(1-s)^{4k(k-1)}$. Thus, since $n^{-1/2}|M^1_T-T/k|\convp 0$ by Lemma~\ref{lem:explore_ancestors}, it holds that the difference in height between $n$ and the highest branchpoint is $n^{1/2}4(1-U^{1/(4k(k-1))})(1+o_p(1))$. Moreover, by Proposition~\ref{prop:dist_limit}, $|n^{-1/2}d(1,n)-4|\convp 0$. Combining these two facts then implies that the distance from the root to the highest branchpoint is $n^{1/2}4U^{1/(4k(k-1))}(1+o_p(1))$ as claimed.  The convergence of $L(T)$ under rescaling  follows from Lemma~\ref{lem:explore_ancestors}. \qedhere \end{proof}

\begin{proof}[Proof of Theorem~\ref{thm:meso_not_crt}]
We first apply Lemma~\ref{lem:1st_step_ancestral} to $n_1=n, n_2=n-1,\dots, n_k=n-k+1$ to find the claimed asymptotic height of the highest branchpoint. Moreover, we observed that for $\ell=\argmax_{1\le i \le k} L_{n_i}(M^i_T)$, for any $i$, it holds that $L_{n_i}(M^i_T)\ge j(L_{n_\ell}(M^\ell_T)-1)$. Thus, we may apply Lemma~\ref{lem:1st_step_ancestral} for $k-1$, with $n_1,\dots, n_{k-1}$ the distinct values in $\{L_{n_i}(M^i_T)\}$ to find that the distance to the root of the next branchpoint is $(L(T))^{1/2}4U^{1/(4(k-1)(k-2))}(1+o_p(1))$, so, by the convergence in distribution of $L(T)$ in Lemma~\ref{lem:1st_step_ancestral}, we deduce the joint asymptotic law of the highest and second highest branchpoint. The result then follows inductively. \end{proof}

}

\subsection{Analysis of the height in the macroscopic regime}
We start with the proof of Prop. \ref{prop:bp-height} following the relatively well trodden path pioneered in \cite{pittel1994note} using \cite{kingman1975first}. By Lemma \ref{lem:malthus} we have that on the set of survival $\set{|\BP_{\theta}(\infty)| = \infty}$, 
\begin{equation}
    \label{eqn:stop-as}
    T_n - \log{n} \convas -\log{W_\theta}. 
\end{equation}
Next, for each fixed $k\geq 1$ define the stopping time $B_k$ as the first time there exists an individual in generation $k$ in $\BP_\theta$. Then \cite{kingman1975first} shows that on the event of survival, 
\begin{equation}
    \label{eqn:first-birth-as}
    B_k/k\convas \kappa(\theta), \qquad \text{as } k\to\infty,
\end{equation}
where $\kappa(\theta)$ is as in Prop. \ref{prop:bp-height}, defined in \eqref{eqn:def-ht-con}. Now by the definition of the height, 
\[B_{\cH(\cT_{n,\theta}^{\BP})} \leq T_n \leq B_{\cH(\cT_{n,\theta}^{\BP})+1} \]
Combining \eqref{eqn:stop-as} and \eqref{eqn:first-birth-as} completes the proof of the proposition. 

\qed

Now \textcolor{black}{we use} the results of \cite{devroye2012depth} which analyzed \textcolor{black}{the model of scaled attachment random recursive trees (SARRT). Recall from the definition of this model from Section \ref{sec:SARRTs} that in this model, vertex $n$ connects to a vertex $\lfloor n X_n \rfloor$ where $\{X_i:i\geq 0\}$ is i.i.d.\ from a given distribution $\mu$. } 

In the macroscopic regime, the model considered in this paper is a special case of SAART model with $\mu = \text{U}[\theta , 1]$. Now let $X\sim \text{U}[\theta , 1]$ and let $\mu = \E( - \log(X))$. Define 
\begin{equation}
    \label{eqn:DFW-def}
    \Lambda(\lambda) = \log(\E(X^\lambda)), \qquad \Lambda^*(z) = \sup_{\lambda \in \bR}(\lambda z - \Lambda(\lambda))
\end{equation}
Next let $\Psi(c) = c\Lambda^*(-1/c)$ and define 
\begin{equation}
    \label{eqn:alpha-max}
    \alpha_{\max} = \inf\set{c: c> 1/\mu, \text{ and } \Psi(c) > 1}.
\end{equation}
It is easy to check that $\alpha_{\max} = [\kappa(\theta)]^{-1} $. Then \cite{devroye2012depth}*{Theorem 2} shows that $H_n/\log{n} \probc \alpha_{\max}$. This completes the proof of Theorem \ref{thm:ht-macro}. \qed 

\section{Local limits of SARRTs}
\begin{proof}[Proof sketch of Theorem \ref{thm:lwc-saart}.]
    \textcolor{black}{The proof is identical to that of Theorem \ref{thm:lwc-macro} except for the following changes. \begin{enumerate}
        \item The quantity $p_{j}^n(i,x)$ defined in \eqref{eqn:prob-step} in this case would be \begin{align*}
            p_{j}^n(i,x) &= \ind\set{i\leq T_{j,K}^n(\mvx)} \int_{\rho_{j}^n(\mvx)/i}^{(\rho_{j}^n(\mvx)+1)/i} f_{V}(u)du\\
            &= \ind\set{i\leq T_{j,K}^n(\mvx)} \E(\ind\set{\rho_{j}^n(\mvx)/V \leq i < (\rho_{j}^n(\mvx)+1)/V})
        \end{align*}
        \item Using approximations as in step(ii) of the proof of Theorem \ref{thm:lwc-macro}, we have \begin{align*}
            \prod_{i\in \cI_{\mvx}}(1-\sum_{j=0}^K p_{j}^n(i,\mvx)) &\approx \exp\left(\sum_{i\in \cI_{\mvx}}\log\left(1-\sum_{j=0}^K p_{j}^n(i,\mvx)\right)\right)\\
            &\approx \exp\left(-\sum_{j=0}
        ^K \sum_{i\in \cI_{\mvx}}\ind\set{i\leq T_{j,K}^n(\mvx)} \E(\ind\set{\rho_{j}^n(\mvx)/V \leq i < (\rho_{j}^n(\mvx)+1)/V})\right)\\
        &\approx \sum_{j=0}^K \E\left(\frac{1}{V} \ind\set{-\log V \leq \log(T_{j,K}^n(\mvx)/\rho_{j}^n(\mvx)) }\right)
        \end{align*}
    \end{enumerate}  The remainder of the proof follows as in Theorem \ref{thm:lwc-macro}.}
\end{proof}

\section*{Acknowledgements} 
This project was initiated during the 2024 BIRS-CMI meeting “Mathematical Foundations of Network Models and their Applications” (24w4004), supported by BIRS, CMI and NHBM. Part of the work was carried out while OA, SB and SD were in residence at the Simons Laufer Mathematical Sciences Institute in Berkeley, California, during the Spring 2025 semester. They gratefully acknowledge the financial support of the National Science Foundation under Grant No. DMS-1928930. SD acknowledges the financial support of the CogniGron research center and the Ubbo Emmius Funds (University of Groningen). Her research was also partially supported by the Marie Skłodowska-Curie grant GraPhTra (Universality in phase transitions in random graphs), grant agreement ID 101211705. SB and AS were partially supported by NSF DMS-2113662, DMS-2413928, and DMS-2434559.  SB was also partially funded by NSF RTG grant DMS-2134107. NM was supported by an AMS Simons travel grant for a visit to the Department of Statistics and Operations Research at the University of North Carolina, Chapel Hill, where part of this work was carried out. All the authors would like to thank Louigi Addario-Berry for many invaluable discussions.

\bibliographystyle{abbrvnat}
\bibliography{ref}
\end{document}